\DeclarePairedDelimiter\paren{\lparen}{\rparen}
\DeclarePairedDelimiter\brackets{[}{]}
\DeclarePairedDelimiter\abs{\lvert}{\rvert}
\DeclarePairedDelimiter\norm{\lVert}{\rVert}
\DeclarePairedDelimiterX{\closedStochasticInterval}[1]{[}{]}{\!\delimsize[#1\delimsize]\!}
\DeclarePairedDelimiterX{\leftOpenStochasticInterval}[1]{]}{]}{\!\delimsize]#1\delimsize]\!}
\DeclarePairedDelimiterX{\rightOpenStochasticInterval}[1]{[}{[}{\!\delimsize[#1\delimsize[\!}
\DeclarePairedDelimiterX{\openStochasticInterval}[1]{]}{[}{\!\delimsize]#1\delimsize[\!}
\newcommand{\bp}[1]{\paren[\big]{#1}}
\newcommand{\diff}{\,\mathrm d}
\newcommand{\EE}{\mathds{E}}
\newcommand{\PP}{\mathds{P}}
\newcommand{\RR}{\mathds{R}}
\newcommand{\scA}{\mathcal{A}}
\newcommand{\scB}{\mathcal{B}}
\newcommand{\scC}{\mathcal{C}}
\newcommand{\scE}{\mathcal{E}}
\newcommand{\scF}{\mathcal{F}}
\newcommand{\scS}{\mathcal{S}}
\newcommand{\scW}{\mathcal{W}}
\newcommand{\overbar}[1]{\mkern 1.5mu\overline{\mkern-1.5mu#1\mkern-1.5mu}\mkern 1.5mu}
\newcommand{\squeeze}[2][0]{%
  \mbox{$\medmuskip=#1mu\displaystyle#2$}%
}
\newcommand{\comment}[1]{}
\newcommand{\baseS}{\widebar{S}}
\newcommand{\admissibleSellStrategies}[1]{\scA_\text{mon}(#1)}
\newcommand{\admissibleFiniteVariationStrategies}[1]{\scA_\text{bv}(#1)}
\newcommand{\buyReg}{\scB}
\let\originaltheta\theta
\newcommand{\assetsProcess}{\Theta}
\newcommand{\lagrangeMult}{\widebar \lambda}
\newcommand{\ybar}{\widebar y}
\newcommand{\thetabar}{\widebar \originaltheta}
\newcommand{\Vboundary}{V_\text{bdry}}
\newcommand{\Vbuysell}{V^{\buyReg,\scS}}
\newcommand*\if@single[3]{%
  \setbox0\hbox{${\mathaccent"0362{#1}}^H$}%
  \setbox2\hbox{${\mathaccent"0362{\kern0pt#1}}^H$}%
  \ifdim\ht0=\ht2 #3\else #2\fi
  }
\newcommand*\rel@kern[1]{\kern#1\dimexpr\macc@kerna}
\newcommand*\widebar[1]{\@ifnextchar^{{\wide@bar{#1}{0}}}{\wide@bar{#1}{1}}}
\newcommand*\wide@bar[2]{\if@single{#1}{\wide@bar@{#1}{#2}{1}}{\wide@bar@{#1}{#2}{2}}}
\newcommand*\wide@bar@[3]{%
  \begingroup
  \def\mathaccent##1##2{%
%If there's more than a single symbol, use the first character instead (see below):
    \if#32 \let\macc@nucleus\first@char \fi
%Determine the italic correction:
    \setbox\z@\hbox{$\macc@style{\macc@nucleus}_{}$}%
    \setbox\tw@\hbox{$\macc@style{\macc@nucleus}{}_{}$}%
    \dimen@\wd\tw@
    \advance\dimen@-\wd\z@
%Now \dimen@ is the italic correction of the symbol.
    \divide\dimen@ 3
    \@tempdima\wd\tw@
    \advance\@tempdima-\scriptspace
%Now \@tempdima is the width of the symbol.
    \divide\@tempdima 10
    \advance\dimen@-\@tempdima
%Now \dimen@ = (italic correction / 3) - (Breite / 10)
    \ifdim\dimen@>\z@ \dimen@0pt\fi
%The bar will be shortened in the case \dimen@<0 !
    \rel@kern{0.6}\kern-\dimen@
    \if#31
      \overline{\rel@kern{-0.6}\kern\dimen@\macc@nucleus\rel@kern{0.4}\kern\dimen@}%
      \advance\dimen@0.4\dimexpr\macc@kerna
%Place the combined final kern (-\dimen@) if it is >0 or if a superscript follows:
      \let\final@kern#2%
      \ifdim\dimen@<\z@ \let\final@kern1\fi
      \if\final@kern1 \kern-\dimen@\fi
    \else
      \overline{\rel@kern{-0.6}\kern\dimen@#1}%
    \fi
  }%
  \macc@depth\@ne
  \let\math@bgroup\@empty \let\math@egroup\macc@set@skewchar
  \mathsurround\z@ \frozen@everymath{\mathgroup\macc@group\relax}%
  \macc@set@skewchar\relax
  \let\mathaccentV\macc@nested@a
%The following initialises \macc@kerna and calls \mathaccent:
  \if#31
    \macc@nested@a\relax111{#1}%
  \else
%If the argument consists of more than one symbol, and if the first token is
%a letter, use that letter for the computations:
    \def\gobble@till@marker##1\endmarker{}%
    \futurelet\first@char\gobble@till@marker#1\endmarker
    \ifcat\noexpand\first@char A\else
      \def\first@char{}%
    \fi
    \macc@nested@a\relax111{\first@char}%
  \fi
  \endgroup
}
\newtheoremstyle{boldremark}
	{\topsep}   % space above   {\dimexpr\topsep/2\relax}
	{\topsep}   % space below
	{}          % body font
	{}          % indent amount
	{\bfseries} % theorem head font
	{.}         % punctuation after theorem head
	{.5em}      % space after theorem head
	{}          % theorem head spec. (empty = "normal")
\newtheorem{theorem}{Theorem}[section]
\newtheorem{proposition}[theorem]{Proposition} 
\newtheorem{lemma}[theorem]{Lemma}
\theoremstyle{definition}
\newtheorem{assumption}[theorem]{Assumption}
\theoremstyle{boldremark}
\newtheorem{remark}[theorem]{Remark}
\newtheorem{example}[theorem]{Example}
	\crefname{equation}{equation}{equations}%
\numberwithin{equation}{section}
\author{Dirk Becherer\footnote{We thank Peter Bank for fruitful discussions on an early version of the control problem.}
 , Todor Bilarev\footnote{Support by German Science foundation DFG via Berlin Mathematical School BMS and 
research training group RTG1845 StoA  is gratefully acknowledged.} , Peter Frentrup\footnote{Email addresses: becherer,bilarev,frentrup@math.hu-berlin.de}\\
Institute of Mathematics, Humboldt-Universität zu Berlin
}
\title{Optimal Asset Liquidation with Multiplicative Transient Price Impact}
\begin{document}

\maketitle

\begin{abstract}
We study a multiplicative transient price impact model for an illiquid financial market, where trading causes price impact which is multiplicative in relation to the current price, transient over time with finite rate of resilience, and non-linear in the order size.
We construct explicit solutions  for the optimal control and the value function of singular optimal control problems to maximize expected discounted proceeds from liquidating a given asset position.
A free boundary problem, describing the optimal control, is solved for two variants of the problem where admissible controls are monotone or of bounded variation.

\vspace*{2ex}
{\textbf{Keywords}: Singular control, finite-fuel problem, free boundary, variational inequality, 
illiquidity, multiplicative price impact, limit order book}

 {\textbf{MSC2010 subject classifications}:  35R35, 49J40, 49L20, 60H30, 93E20, 91G80}

\end{abstract}

\section{Introduction}

We consider the optimal execution problem for a large trader in an illiquid financial market, 
who aims to sell (or buy, cf.\ Remark~\ref{problem:optimalaquisition}) a given amount of a risky asset, and derive explicit solutions for the optimal control and the related free boundary.
Since orders of the large trader have an adverse impact on the prices at which they are executed, she needs to balance the incurred liquidity costs against her preference to complete a trade early.
Optimal trade execution problems have been studied by many authors. 
We mention \cite{%
BertsimasLo98,%
ObizhaevaWang13,%
AlfonsiFruthSchied10,%
KharroubiPham10,%
AlfonsiSchiedSlynko12,%
Forsyth12,%
LorenzSchied13%
} and refer to \cite{PredoiuShaikhetShreve11,GatheralSchied13} for further references and application background.
  Posing the problem in continuous time leads to a singular stochastic control problem of finite fuel type.  
We note that our control objective, see \eqref{def:liquidation proceeds}--\eqref{task:maximize expected liquidation proceeds}, involves control cost terms like in \cite{Taksar97,DavisZervos98,DufourMiller04}, depending explicitly on the state process $(\baseS,Y)$ with 
a summation of integrals for each jump in the control strategy $\assetsProcess$.
We refer to these articles for more background on singular stochastic control.  
The articles \cite{Taksar97,DufourMiller04} show general results on existence for optimal singular controls; explicit descriptions of those can be obtained only for special problems, see e.g.\ \cite{KaratzasShreve86,Kobila93,DavisZervos98}, but these examples differ from the one considered here in several aspects.

In this paper we investigate a multiplicative limit order book model, which is closely related to the additive limit order book models of \cite{PredoiuShaikhetShreve11,AlfonsiFruthSchied10,ObizhaevaWang13,LorenzSchied13},
a key difference being that the price impact of orders is multiplicative instead of additive.
In absence of large trader activity, the risky asset price follows some unaffected
non-negative price evolution $\baseS=(\baseS_t)$, for instance
geometric Brownian motion.
 The trading strategy $(\assetsProcess_t)$ of the large trader
 has a multiplicative impact on the actual
asset price which is evolving as $S_t=\baseS_t f(Y_t)$, $t\ge 0$, for a process $Y$ that describes the level of market impact.
This process is defined by a mean-reverting differential equation
$\diff Y_t = -h(Y_t)\diff t + \diff\assetsProcess_t$, which is driven by  the amount $\assetsProcess_t$ of risky assets held,  and 
can be interpreted as a volume effect process like in \cite{PredoiuShaikhetShreve11,AlfonsiFruthSchied10}, see \cref{sect:LOB}.
Subject to suitable properties for the functions $f,h$ (see Assumption~\ref{cond:model parameters}), 
 asset sales (buys) are  depressing (increasing) the level of market impact $Y_t$ 
and thereby the actual price $S_t$ in a transient way, with some finite rate of resilience.
For $f$ being  positive, multiplicative price impact ensures
 that risky asset prices $S_t$ are  positive, like in the continuous-time variant \cite[Sect.~3.2]{GatheralSchied13}  of the
 model in \cite{BertsimasLo98}, whereas negative prices can occur in additive impact models.
 We admit for general non-linear impact functions $f$, corresponding to general density shapes of a multiplicative limit order book 
 whose shapes are specified with respect to relative price perturbations $S/{\baseS}$, 
and depth of the order book could be infinite or finite, cf.\ Sect.~\ref{sect:LOB}.
The rate of resilience $h(Y_t)/Y_t$ may be non-constant and (unaffected) transient recovery of $Y_t$ could be non-exponential,
while the problem still remains Markovian in $(\baseS,Y)$ through $Y$, like in \cite{PredoiuShaikhetShreve11} but differently to \cite{AlfonsiFruthSchied10,LorenzSchied13}. 
Following \cite{PredoiuShaikhetShreve11,GuoZervos13}, we admit for general (monotone) bounded variation strategies 
in continuous time, while \cite{AlfonsiFruthSchied10, KharroubiPham10} consider trading at discrete times.

Most of the related literature, like \cite{AlfonsiFruthSchied10,PredoiuShaikhetShreve11}, on transient additive price impact assumes that the unaffected (discounted) price dynamics exhibit no drift, and such a martingale property allows for different arguments in the analysis. 
Without drift, a convexity argument as in \cite{PredoiuShaikhetShreve11} can be applied readily also for multiplicative impact to identify the optimal control in the finite horizon problem with a free boundary that is constant in one coordinate, see \cref{rmk: finite time horizon; no drift; convexity}.
\cite{Lokka12} has shown how a multiplicative limit order book (cf.\ \cref{sect:LOB}) could be transformed into an additive one with further intricate dependencies, to which the result by \cite{PredoiuShaikhetShreve11} may be applied.
For additive impact, \cite{LorenzSchied13}  investigate the problem with general drift for finite horizon, whereas we
derive explicit solutions for multiplicative impact, infinite horizon and negative drift. 
The interesting articles \cite{KharroubiPham10,Forsyth12,GuoZervos13} also solve optimal trade execution problems in a model with multiplicative instead of additive price impact, but models and results differ in key aspects. The article \cite{GuoZervos13} considers  permanent price impact, non-zero bid-ask spread (proportional transaction costs) and a particular exponential parametrization for price impact from block trades, whereas we study transient price impact,  general
impact functions $f$, and zero spread (in Section~\ref{sect:intermediate buy orders}).
Numerical solutions of the Hamilton-Jacobi-Bellman equation derived by heuristic arguments are investigated in \cite{Forsyth12} for a different optimal execution problem on finite horizon  in a Black-Scholes model with permanent multiplicative impact.
The authors of  \cite{KharroubiPham10} obtain viscosity solutions and their nonlinear transient price impact is a functional of the present order size and the time lag from (only) the last trade, whereas we consider impact which depends via $Y$ on the times and sizes of all past orders, as in  \cite{PredoiuShaikhetShreve11}.

We construct explicit solutions for the optimal control which maximizes the expected discounted liquidation proceeds over an infinite time horizon, in a model with multiplicative price impact and drift that is introduced in \cref{sect:model}.  
We use dynamical programming and apply  smooth pasting and calculus
of variations methods to construct in \cref{sec:Free boundary problem} a candidate solution for the variational inequalities arising from the control problem. 
After having the candidate
value function and free boundary curve that determines the optimal control,
we prove 
optimality by verifying the variational inequalities (in \cref{appendix:proofs about value functions}) such that an optimality  principle  (see \cref{prop: supermatringale suffices}) can be applied.  
We obtain explicit solutions for two variants of the optimal liquidation problem.
In the first variant (I), whose solution is presented in \cref{sec:Problem}, the large trader is only admitted to sell but not to buy, whereas for the second variant (II) in \cref{sect:intermediate buy orders}
 intermediate buying is admitted, even though the trader ultimately wants to liquidate her position.  
Variant I may be of interest, if a bank selling a large position on behalf of a client is required by regulation
to execute only sell orders. The second variant might fit for an investor trading for herself and is mathematically needed to explore, whether a multiplicative limit order book model admits profitable round trips or transaction triggered price manipulations, as  studied by \cite{AlfonsiSchiedSlynko12} for additive impact, see \cref{rem:round-trips-in-depressed-market,ex:lorenz-schied}.
Notably, the free boundaries coincide
for both variants, and the time to complete liquidation is finite, varies continuously with the discounting parameter (i.e.~the investor's impatience) and  tends to zero for increasing impatience 
in suitable parametrizations, see~\cref{ex:simple-lob,fig:ttl-over-y0-different-delta}. 
In \cref{ex:lorenz-schied} we compare how additive and multiplicative limit order books give rise to rather different qualitative properties of optimal controls under
 standard Black-Scholes dynamics for unaffected risky asset prices,
 indicating that multiplicative impact fits better to models with multiplicative evolution of asset prices.

\section{Transient and multiplicative price impact}\label{sect:model}

We consider a filtered probability space $(\Omega,\scF,(\scF_t)_{t\ge 0},\PP)$.
The filtration $(\scF_t)_{t\geq 0}$ is assumed to satisfy the usual conditions of right-continuity and completeness,
  all semimartingales have c\`{a}dl\`{a}g paths, and (in)equalities of random variables are meant to hold almost everywhere. We refer to \cite{JacodShiryaev2003_book} for terminology and notations from stochastic analysis. We take $\scF_0$ to be trivial
and let also $\scF_{0-}$ denote the trivial $\sigma$-field. 
We consider a market with a risky asset in addition to the riskless numeraire asset, whose (discounted) price is constant at $1$.
Without trading activity of a large trader, the unaffected (fundamental) price process $S$ of the risky asset
 would be of the form
\begin{equation}\label{eq:baseS def}
	\baseS_t = e^{\mu t} M_t, \qquad \baseS_0 \in (0,\infty),
\end{equation}
 with $\mu \in \RR$ and with $M$ being a non-negative martingale that   is  square integrable on any compact time interval,
i.e.\ $\sup_{t\le T} E[M_t^2]<\infty$ for all $T\in [0,\infty)$, and  quasi-left continuous (cf.\  \cite{JacodShiryaev2003_book}), i.e.\ $\Delta M_{\tau} :=M_{\tau}-M_{\tau-}=0$  for any finite predictable stopping time $\tau$. 
Let us assume that the unaffected market is free of arbitrage for small investors in the sense that $\baseS$ is a local $\mathbb{Q}$-martingale under some probability measure $\mathbb{Q}$ that is locally equivalent to $\mathbb{P}$, i.e.\ $\mathbb{Q}\sim \mathbb{P}$ on $\scF_T$ for any $T\in [0,\infty)$. This implies no free lunch with vanishing risk \cite{DelbaenSchachermayer98} on any finite horizon $T$ for small investors. 
 The prime example where our assumptions on $M$ are satisfied is the Black-Scholes-Merton model, where $M = \mathcal{E}(\sigma W)$ is the stochastic exponential of a  Brownian motion $W$ scaled by  $\sigma > 0$.
 More generally, $M=\mathcal{E}(L)$ could be the stochastic exponential of a local martingale $L$, which is  a L\'evy process  with $\Delta L> -1$ and $\mathbb{E}[M_1^2]<\infty$ and such that $\baseS$ is not monotone (see \cite[Lemma~4.2]{Kallsen00} and \cite[Theorem~9.9]{ContTankov_book}),
 or one could have $M=\mathcal{E}(\int\sigma_t dW_t)$ for predictable stochastic volatility process  $(\sigma_t)_{t\ge0}$ that is bounded  in $[1/c, c]$, for $c>1$.   

The large trader's strategy $(\assetsProcess_t)_{t\geq 0}$ is her position in the risky asset.
Herein, $\assetsProcess_{0-}\ge 0$ denotes the initial position, $\assetsProcess_{0-} - \assetsProcess_t$ is the cumulative number of risky assets sold until time $t$.
The process $\assetsProcess$ is predictable, càdlàg and non-negative, i.e.\ short sales are not permitted, like for instance in \cite{KharroubiPham10,GuoZervos13}. Disallowing short sales is sensible for the control problem 
 with infinite horizon and negative drift to ensure existence of optimizers and finite time to complete liquidation; It is also supported e.g.\ by \cite[Remark 3.1]{Schied13}.
At first we do moreover assume $\assetsProcess$ to be decreasing, but this will be generalized later in \cref{sect:intermediate buy orders} to 
non-monotone strategies of bounded variation.

The large trader is faced with illiquidity costs, since trading causes adverse impact on the prices at which orders are executed, as follows. 
A process $Y$, the \emph{market impact process}, captures the price impact from strategy $\assetsProcess$, and is defined as the 
solution to
\begin{equation}\label{eq:deterministic Y_t dynamics}
	\diff Y_t = -h(Y_t) \diff t + \diff \assetsProcess_t 
\end{equation}
for some given initial condition $Y_{0-} \in \RR$.
Let $h: \RR \rightarrow \RR$ be strictly increasing and continuous with $h(0) = 0$. Further conditions will be imposed later in \cref{cond:model parameters}.
The market is resilient in that market impact $Y$ tends back towards its neutral level $0$ over time when the large trader is not active. Resilience is transient
with resilience rate $h(Y_t)$ that could be non-linear and is specified by the \emph{resilience function} $h$.
For example, the market recovers at exponential rate $\beta > 0$  (as in \cite{ObizhaevaWang13}) when  $h(y) = \beta y$ is linear. 
Clearly, $Y$ depends on $\assetsProcess$ and occasionally we will emphasize this by writing  $Y = Y^\assetsProcess$.

The actual (quoted) risky asset price $S$ is affected by the strategy $\assetsProcess$ of the large trader in a multiplicative way through the market impact process $Y$, and is modeled by
\begin{equation}\label{def:fctf}
	S_t := f(Y_t) \baseS_t,
\end{equation}
for an increasing function $f$ of the form 
\begin{equation} \label{eq:f and lambda}
	f(y) = \exp\Big(\int_0^y \lambda(x) \diff x \Big),\quad y\in\RR,
\end{equation}
with $\lambda: \RR \to (0,\infty)$ satisfying \cref{cond:model parameters} below, in particular being locally integrable.
For strategies $\assetsProcess$ that are continuous, the process $(S_t)_{t\geq 0}$ can be seen as the evolution of prices at which the trading strategy $\assetsProcess$ is executed.
That means, if the large trader is selling risky assets according to a continuous strategy $\assetsProcess^c$, then respective (self-financing) variations of her numéraire (cash) account are given by the proceeds (negative costs) $-\int_0^T S_u \diff \assetsProcess^c_u$ over any period $[0,T]$.
To permit also for non-continuous trading involving block trades, the proceeds from a market sell order of size $\Delta \assetsProcess_t\in \mathbb{R}$ at time $t$, are given by the term
\begin{equation}
	-\baseS_{t}\int_{0}^{\Delta \assetsProcess_t} f(Y_{t-} + x) \diff x, \label{eq:block sale proceeds}
\end{equation}
which is explained from executing the block trade within a (shadow) limit order book, see \cref{sect:LOB}.
Mathematically, defining proceeds from block trades in this way ensures good stability %
properties for proceeds defined by \eqref{def:liquidation proceeds}
as a function %
of strategies $\assetsProcess$, cf.\ \cite{BechererBilarevFrentrup-model-properties}.
In particular, approximating a block trade by a sequence of continuous trades executed over a shorter and shorter time interval yields the term \eqref{eq:block sale proceeds} in the limit.

\subsection{Limit order book perspective for multiplicative market impact}
\label{sect:LOB}

Multiplicative price impact and the proceeds from block trading can be interpreted by trading in a shadow limit order book (LOB).
We now show how the multiplicative price impact function $f$ is related to a LOB shape that is specified in terms of {\em relative} price pertubations $\rho_t:=S_t/\baseS_t$, whereas additive impact corresponds to a LOB shape being specified with respect to absolute price pertubations $S_t-\baseS_t$  as in \cite{PredoiuShaikhetShreve11}.  
 Note that the LOB shape is static (and  \cref{sect:intermediate buy orders} considers a two-sided LOB wth zero bid-ask spread).
 Such can be viewed as a low-frequency model for price impact according to a LOB shape which is representative on  longer horizons, but not for  high frequency trading over short periods.

Let $s = \rho \baseS_t$ be some price close to the unaffected price $\baseS_t$ and let $q(\rho) \diff \rho$ denote the density  of (bid or ask) offers at price level $s$, i.e.~at the relative price perturbation $\rho$. 
This leads to a measure with cumulative distribution function $Q(\rho) := \int_1^\rho q(x) \diff x$, $\rho \in (0,\infty)$. 
The total volume of orders at prices corresponding to perturbations $\rho$ from some
range $  R \subset (0,\infty)$ 
then is $\int_R q(x) \diff x$. 
Selling $-\Delta \assetsProcess_t$ shares at time $t$ shifts the price from $\rho_{t-} \baseS_t$ to $\rho_t \baseS_t$, while the volume change is $Q(\rho_{t-}) - Q(\rho_t) = -\Delta \assetsProcess_t$. 
The proceeds from this sale are $\baseS_t \int_{\rho_t}^{\rho_{t-}} \rho \diff Q(\rho)$. 
Changing variables, with $Y_t := Q(\rho_t)$ and $f := Q^{-1}$, the proceeds can be expressed as in \cref{eq:block sale proceeds}. 
In this sense,  $Y$ from \eqref{eq:deterministic Y_t dynamics} can be understood as the \emph{volume effect process} as in \cite[Section~2]{PredoiuShaikhetShreve11}. See \cref{fig:order-book} for illustration.

\begin{figure}[ht]
	\centering
	\begin{overpic}[width=0.6\textwidth]{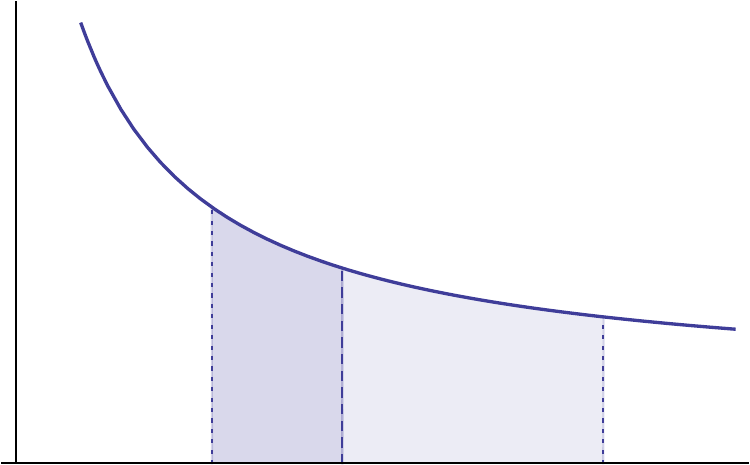}
		\put(15,55){$q$}
		
		\put(37,10){\clap{$-\Delta\assetsProcess_t$}}
		\put(62,10){\clap{$-Y_t$}}
		
		\put(28.5,0){\raisebox{-1em}{\clap{$f(Y_t)$}}}
		\put(46,0){\raisebox{-1em}{\clap{$f(Y_{t-})$}}}
		\put(80.5,0){\raisebox{-1em}{\clap{$1$}}}
	\end{overpic}%
	\vspace{1em}
	\caption{Order book density $q$ and behavior of the multiplicative price impact $f(Y)$ when selling a block of size $-\Delta \assetsProcess_t > 0$. Note that $-Y_t = -Y_{t-} - \Delta \assetsProcess_t$.}
	\label{fig:order-book}
\end{figure}

\begin{example}\label{ex:order book densities}
	Let the (one- or two-sided) shadow limit order book density be $q(x) \!:=\! {c}/{x^r}$ on $x\in (0,\infty)$ for constants $c,r>0$. 
	Parameters $c$ and $r$ determine the market depth (LOB volume): 
	If $r < 1$, a trader can sell only finitely many but buy infinitely many assets at any time. 
	In contrast, for $r>1$ one could sell infinitely many but buy only finitely many assets at any time instant and (by \eqref{eq:deterministic Y_t dynamics}) also in any finite time period. Note that \cite[p.185]{PredoiuShaikhetShreve11} assume infinite market depth in the target trade direction.
	The case $r=1$ describes infinite market depth in both directions.
	The antiderivative $Q$ and its inverse $f$ are determined for  $x > 0$ and $(r-1)y \ne c$ as 
	\begin{align*}
		Q(x) &= \begin{cases} c \log x, &\text{for }r=1, \\ \frac{c}{1-r}(x^{1-r}-1), &\text{otherwise,} \end{cases}%
	&	f(y) = \begin{cases} e^{y/c}, &\text{for }r=1, \\ \paren[\big]{1 + \frac{1-r}{c} y}^{1/(1-r)}, &\text{otherwise.} \end{cases}
	\end{align*}
	For the parameter function $\lambda$ this yields \(\lambda(y) = f'(y)/f(y)= ({c + (1-r)y})^{-1}\).
	Note that for $r\ne 1$ the functions $f$ and $\lambda$ are effectively constrained to the domain $(\frac{c}{r-1}, \infty)$ for $r < 1$ and $(-\infty, \frac{c}{r-1})$ for $r > 1$.
We have assumed in the paper that  $f>0$ is defined on the whole real line for simplicity.
Yet, let us use this example to explain next how also interesting cases like $r\in (0,\infty)\setminus\{1\}$ can be dealt with
 by refining the definition of the set of admissible strategies according to $f$. Indeed, properties of $f$ are only needed within the range of possible values of processes $Y^\Theta$. Hence, the more general case where 
$I_f:=\{y: 0<f(y)<\infty\}$ is an open interval in $\RR$ can be treated by imposing as an additional requirement
for  admissibility
 of a strategy $\Theta$  (in \eqref{eq:admmonostrat}, \eqref{eq:admfvstrat}) that  $Y^\Theta$ has to evolve in $I_f$.
\cref{ex:simple-lob} will investigate this case further.
\end{example}

\section{Optimal~liquidation with monotone strategies}
\label{sec:Problem}

This section solves the optimal liquidation problem that is central for this paper.
The large investor is facing the task to sell $\assetsProcess_{0-}$ risky assets but has the possibility to split it into smaller orders to improve according to some performance criterion. 
Before \cref{sect:intermediate buy orders}, we will restrict ourselves to monotone control strategies that do not allow for intermediate buying. 
The analysis for this more restricted variant of control policies will be shown later in  \cref{sect:intermediate buy orders} to carry over to an alternative problem with a wider set of controls, being of finite variation, admitting also intermediate buy orders.

For an initial position of $\assetsProcess_{0-}$ shares,  the set of admissible trading strategies
 is
\begin{align} \label{eq:admmonostrat}
\begin{split}
	\admissibleSellStrategies{\assetsProcess_{0-}} := \bigl\{ \assetsProcess \bigm| {}&\text{$\assetsProcess$ is decreasing, càdlàg, predictable,} 
\\			&\text{with }\assetsProcess_{0-} \ge \assetsProcess_t \ge 0 \bigr\}.
\end{split}
\end{align}
Here, the quantity $\assetsProcess_t$ represents the number of shares held at time $t$.
Any admissible strategy $\assetsProcess \in \admissibleSellStrategies{\assetsProcess_{0-}}$ decomposes into a continuous and a discontinuous part
\begin{equation}
	\assetsProcess_t = \assetsProcess^c_t + \sum_{0 \le s \le t} \Delta \assetsProcess_s,
\end{equation}
where $\assetsProcess^c_t$ is continuous (and increasing) and $\Delta \assetsProcess_s := \assetsProcess_s - \assetsProcess_{s-} \le 0$. 
Aiming for an explicit analytic solution, we consider trading on the infinite time horizon $[0, \infty)$ with discounting. 
The $\gamma$-discounted proceeds from strategy $\assetsProcess$ up to time $T<\infty$ are
\begin{equation} \label{def:liquidation proceeds}
	L_T(y;\assetsProcess):= -\int_0^T e^{ - \gamma t}f(Y_{t})\baseS_{t} \diff \assetsProcess^c_t - \sum_{\substack{0\leq t \leq T \\ \Delta \assetsProcess_t \neq 0}} e^{ - \gamma t} \baseS_t \int_0^{\Delta \assetsProcess_t} f(Y_{t-} + x) \diff x,
\end{equation}
where $y = Y_{0-}$ is the initial state of process $Y$. 
Clearly, $Y_{0-}$ and $\assetsProcess$ determine $Y$ by \eqref{eq:deterministic Y_t dynamics}.
\begin{remark} \label{rmk:convergence of block proceeds}
The (possibly) infinite sum in \eqref{def:liquidation proceeds} has finite expectation. 
Indeed, for any $\assetsProcess \in \admissibleSellStrategies{\assetsProcess_{0-}}$ one has $\sup_{t\leq T}|Y_t|<\infty$.
Hence, the mean value theorem and properties of $f$ imply for $t\in [0,T]$ that 
\[
	0 \le -\int_0^{\Delta \assetsProcess_t} f(Y_{t-} + x)\diff x 
	\le -\Delta \assetsProcess_t \sup_{x\in (\Delta \assetsProcess_t, 0)} f(Y_{t-} + x) 
	\le -\Delta \assetsProcess_t \cdot \sup_{t\le T} f(Y_{t})
	\,.
\]
Thus, by finite variation of $\assetsProcess$ the infinite sum in \eqref{def:liquidation proceeds} a.s.\ converges absolutely.
For $\assetsProcess \in \admissibleSellStrategies{\assetsProcess_{0-}}$ the sum is bounded in expectation, because $Y$ and hence $\sup_{t\le T}f(Y_t)$ are bounded, and we have $\EE[\sup_{t\in [0,T]} \baseS_t] < \infty$ and $0 \le \sum_{t\in [0,T]} (-\Delta \assetsProcess_s) \le \assetsProcess_{0-}$.
\end{remark}

Note that the monotone limit $L_\infty(y;\assetsProcess) := \lim_{T \nearrow \infty} L_T(y;\assetsProcess)$ always exists. 
We consider the control problem to find the optimal strategy that maximizes the expected (discounted) liquidation proceeds over an open (infinite) time horizon
\begin{align}\label{task:maximize expected liquidation proceeds} 
&	\max_{\assetsProcess \in \admissibleSellStrategies{\assetsProcess_{0-}}} J(y;\assetsProcess) \qquad \text{for} \quad  J(y;\assetsProcess) := \EE[ L_\infty(y;\assetsProcess) ],
\\
& \text{with value function}\quad 
\label{eq:val fn mon case}
	v(y,\theta) := \sup_{\assetsProcess \in \admissibleSellStrategies{\theta}} J(y;\assetsProcess).
\end{align}
 For this problem maximizing over deterministic strategies turns out to be sufficient (see \cref{rmk:optimal strategy is deterministic} below). Since expectations $\EE[\exp(-\gamma t)\baseS_t]=\baseS_0\exp(-t(\gamma-\mu))$, $t\ge 0$, depend on  $\mu,\gamma$ only through $\delta := \gamma - \mu$, for our optimization problem just the difference $\delta$ matters which needs to be positive to have $v(y,\theta)<\infty$ for $\theta>0$.  
Thus, regarding $\gamma$ and $\mu$, only the difference  $\delta$ will be needed, and it might be
interpreted as impatience parameter chosen by the large investor (when choosing $\gamma$),  specifying her preferences to liquidate earlier rather than later, as a drift rate of the risky asset returns $d\baseS/\baseS$, or as a combination thereof.
The following conditions on $\delta, f, h$ are assumed for \cref{sec:Problem,sec:Free boundary problem,sect:intermediate buy orders}.
\begin{assumption}\label{cond:model parameters}
	The map  $t\mapsto \EE[e^{-\gamma t}\baseS_t]$, $t\ge 0$, is decreasing, i.e.\ $\delta:= \gamma - \mu > 0$.\\
	The price impact function $f: \RR \to (0,\infty)$ satisfies $f(0)=1$, $f \in C^2$ and is strictly increasing such that $\lambda(y):= f'(y)/f(y) > 0$ everywhere.\\
	The resilience function $h: \RR \to \RR$ from \eqref{eq:deterministic Y_t dynamics} is $C^2$ with $h(0) = 0$ and $h' > 0$.\\	
	Resilience and market impact satisfy $(h \lambda)' > 0$ and $(h\lambda + h')' > 0$.\\
	There exist solutions $y_0$ to $h(y_0)\lambda(y_0) + \delta = 0$ and $y_\infty$ to $h(y_\infty)\lambda(y_\infty) + h'(y_\infty) + \delta = 0$. (Uniqueness of $y_0$ and $y_\infty$ holds by the other conditions.)
\end{assumption}

\begin{remark}[Interplay of impact and resilience functions]
The two assumptions $(h\lambda)' > 0$ and $(h\lambda + h')' > 0$ are technical requirements for our verification of optimality.
Already from the shadow limit order book (LOB) perspective (cf.\ \cref{sect:LOB}), some sort of condition connecting both resilience speed $h$ and price impact $f$, thus LOB shape, appears natural.
Examples satisfying \cref{cond:model parameters} with $f(y):= e^{\lambda y}$ for constant $\lambda > 0$ are e.g.~linear resilience speed, $h(y) = \beta y$ with $\beta > 0$ and any discounting $\delta > 0$; or for instance bounded resilience,
\(
	h(y) = \alpha \arctan(\beta y),
\)
for $\alpha,\beta > 0$, with $\beta < \lambda$ and not too large discounting $0 < \delta < \frac{1}{2}\alpha \lambda \pi$ (a larger $\delta$ would give that the trivial strategy to sell everything initially at time $0$ is optimal).
\end{remark}

The main \cref{thm: optimal strategy,,thm:optimal liquidation with intermediate buy orders}  solve the optimal liquidation problem for one- respectively two- sided limit order books.
The proof of \cref{thm: optimal strategy} is given in \cref{sec:Free boundary problem}.

\begin{theorem}\label{thm: optimal strategy}
	Let the model parameters $h$, $\lambda$, $\delta$ satisfy \cref{cond:model parameters} and $\assetsProcess_{0-} \ge 0$ be given.
	Define ${y_\infty < y_0 < 0}$ as the unique solutions of ${h(y_\infty)\lambda(y_\infty) + h'(y_\infty) + \delta = 0}$ and ${h(y_0) \lambda(y_0) + \delta = 0}$, respectively, and let
	\begin{equation} \label{defThmOptimalStrategyTTL}
		\tau(y):= -\frac{1}{\delta} \log \paren[\bigg]{\frac{f(y)}{f(y_0)} \frac{h(y)\lambda(y) + h'(y) + \delta}{h'(y)}},
	\end{equation}
	for ${y \in (y_\infty, y_0]}$ with inverse function ${\tau \mapsto \ybar(\tau) : [0,\infty) \to (y_\infty, y_0]}$.
	Moreover, let $\theta(y)$, ${y \in (y_\infty, y_0]}$, be the strictly decreasing solution to the ordinary differential equation
	\begin{equation} \label{defThmOptimalStrategyDiffThetaY}
	\begin{aligned}
		\theta'(y) &= 1 + \frac{h(y) \lambda(y)}{\delta} - \frac{h(y)h''(y)}{\delta h'(y)} + \frac{h(y) \paren[\big]{h\lambda + h' + \delta}'(y)}{\delta \paren[\big]{h\lambda + h' + \delta}(y)},\quad y \in (y_\infty, y_0]\,,
	\end{aligned}
	\end{equation}
	with initial condition $\theta(y_0) = 0$, and let $\theta \mapsto y(\theta)$, $\theta \ge 0$,  denote its inverse.
	For given $\assetsProcess_{0-}$ and $Y_{0-}$, define the liquidation strategy $\assetsProcess = \assetsProcess^\text{opt}$ as follows.
	\begin{enumerate}
		\item\label{strategy-step:sell all}
			If $Y_{0-} \ge y_0 + \assetsProcess_{0-}$, sell all assets at once: $\assetsProcess_0 = 0$.
		
		\item\label{strategy-step:sell block}
			If $y(\assetsProcess_{0-}) < Y_{0-} < y_0 + \assetsProcess_{0-}$, then sell a block of size $-\Delta \assetsProcess_0 \equiv \assetsProcess_{0-} - \assetsProcess_0$ such that $\assetsProcess_0 > 0$ and $Y_0 \equiv Y_{0-} + \Delta \assetsProcess_0 = y(\assetsProcess_0)$.
		
		\item\label{strategy-step:wait}
			If $Y_{0-} < y(\assetsProcess_{0-})$, wait until time $s = \inf \{ t>0 \mid y_w(t) = y(\assetsProcess_{0-})\} < \infty$, where $y_w$ is the solution to the ODE $y_w'(t) = -h(y_w(t))$ with initial condition $y_w(0) = Y_{0-}$.
			That is, set $\assetsProcess_t = \assetsProcess_{0-}$ for $t < s$.
			This leads to $Y_t = y_w(t)$ for $t < s$.
		
		\item\label{strategy-step:follow boundary}
			As soon as step~\ref{strategy-step:sell block} or~\ref{strategy-step:wait} lead to the state $Y_s = y(\assetsProcess_s)$ for some time $s \ge 0$, sell continuously: $\assetsProcess_t = \theta(\ybar(T-t))$, $s \le t \le T$, until time $T = s + \tau(y(\assetsProcess_s))$.
		
		\item\label{strategy-step:finished}
			Stop when all assets are sold
			at some time $T<\infty$: $\assetsProcess_t = 0$, $t \in [T,\infty)$.
	\end{enumerate}
	Then the strategy $\assetsProcess^\text{opt}$ is the unique maximizer to the problem~\eqref{task:maximize expected liquidation proceeds} of optimal liquidation
	\(
		\max_{\assetsProcess \in \admissibleSellStrategies{\assetsProcess_{0-}}} \EE[ L_\infty(y;\assetsProcess) ]
	\)
	for $\assetsProcess_{0-}$ assets with initial market impact being $Y_{0-} = y$.
\end{theorem}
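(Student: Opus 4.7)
My plan is to combine dynamic programming with the supermartingale verification principle alluded to in \cref{prop: supermatringale suffices}. First I would define the candidate value function $V(y,\theta) := J(y;A^{\mathrm{opt}})$. Since $\baseS_t = e^{\mu t}M_t$ with $M$ a martingale and $A^{\mathrm{opt}}$ is deterministic given $(y,\theta)$, the expectation factors as $V(y,\theta) = \baseS_0\, \widebar V(y,\theta)$ for a purely deterministic $\widebar V$ built against the effective discount $e^{-\delta t}$. The half-plane $\{\theta \ge 0\}$ splits into the four regimes of the theorem: the sell-all region $\{y \ge y_0 + \theta\}$, the block-sale region $\{y(\theta) < y < y_0+\theta\}$, the boundary curve $\{y = y(\theta)\}$, and the wait region $\{y < y(\theta)\}$. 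On each of these, $\widebar V$ admits an explicit expression in terms of $f,h,\delta,\lambda$, via the parameterizations \eqref{defThmOptimalStrategyTTL}--\eqref{defThmOptimalStrategyDiffThetaY}, the block-trade identity $\widebar V(y,\theta) = \int_0^\Delta f(y-x)\diff x + \widebar V(y-\Delta,\theta-\Delta)$, and integration of $\delta \widebar V + h(y)\widebar V_y = 0$ during waiting.

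Next I would show $\widebar V$ satisfies the Hamilton--Jacobi--Bellman variational inequality
\begin{equation*}
\max\bigl\{-\delta \widebar V - h(y)\widebar V_y,\; f(y) - \widebar V_y - \widebar V_\theta \bigr\} = 0,\qquad \widebar V(y,0) = 0,
\end{equation*}
with equality of the first argument on the wait region and of the second on the sell/block/boundary regions. The ODE \eqref{defThmOptimalStrategyDiffThetaY} for $\theta(y)$ is precisely the compatibility relation that forces $\widebar V$ to be $C^1$ across the free boundary $\{y = y(\theta)\}$, i.e.\ smooth pasting, which is the content of the already-invoked \cref{lemma: V continuously differentiable}. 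The two one-sided inequalities ($f(y) - \widebar V_y - \widebar V_\theta \le 0$ on the wait region, $\delta \widebar V + h(y) \widebar V_y \ge 0$ on the sell and block regions) then reduce to monotonicity checks in $y$ and $\theta$ of the explicit formulas, driven by $(h\lambda)' > 0$ and the defining equations for $y_0$ and $y_\infty$.

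For the verification step, I would apply It\^o's formula to $e^{-\delta t}\baseS_t \widebar V(Y^A_t, \Theta^A_t)$ for an arbitrary $A \in \admissibleSellStrategies{\Theta_{0-}}$ and use the VI together with the integrated jump inequality (to handle block trades) to conclude that
\begin{equation*}
M^A_t := e^{-\delta t}\baseS_t \widebar V(Y^A_t, \Theta^A_t) + \int_0^t e^{-\gamma s}\baseS_s f(Y^A_s)\diff A^c_s + \sum_{0 \le s \le t} e^{-\gamma s}\baseS_s \int_0^{\Delta A_s}f(Y^A_{s-} - x)\diff x
\end{equation*}
is a supermartingale that becomes a true martingale under $A = A^{\mathrm{opt}}$. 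Square-integrability of $\baseS$ on compacts, the finite-fuel bound $A_t \le \Theta_{0-}$, and $\widebar V(\cdot, 0) = 0$ yield $L^1$ convergence as $t \to \infty$. Taking expectations gives $\EE[L_\infty(y;A)] \le \baseS_0 \widebar V(y,\theta) = \EE[L_\infty(y;A^{\mathrm{opt}})]$, and uniqueness follows because the VI inequalities are strict off the optimal support, so any $A$ differing from $A^{\mathrm{opt}}$ on a set of positive $\diff\PP \otimes \diff t$ measure produces a strictly negative drift in $M^A$.

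The step I expect to be the main obstacle is verifying the two one-sided VI inequalities globally. The wait-region inequality $f(y) \le \widebar V_y + \widebar V_\theta$ -- that immediate selling is not profitable there -- should follow from the smooth-pasting equality on $\{y = y(\theta)\}$ by monotonicity in $y$ using $(h\lambda)' > 0$, while the sell-region inequality $\delta \widebar V + h(y)\widebar V_y \ge 0$ should follow from the equation defining $y_0$ and the fact that block-sale trajectories are straight lines of slope $1$ in the $(y,\theta)$-plane down to the boundary. Handling the $\{y \ge y_0 + \theta\}$ regime, where $\widebar V$ is a pure integral of $f$ and the first-order derivatives must still match the other regimes along the seam $y = y_0 + \theta$, is the most delicate case, and is where the condition $h(y_0)\lambda(y_0) + \delta = 0$ enters crucially to make things consistent.
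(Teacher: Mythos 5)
Your proposal follows essentially the same route as the paper: construct $V$ explicitly region by region from the free boundary given by \eqref{defThmOptimalStrategyTTL}--\eqref{defThmOptimalStrategyDiffThetaY}, establish $C^1$ smooth pasting and the one-sided variational inequalities (the content of \cref{lemma: V continuously differentiable} and the appendix lemmas), and then verify optimality and uniqueness through the supermartingale/martingale optimality principle of \cref{prop: supermatringale suffices}, handling block trades via the integrated jump term and time ``$0-$''. The only slip is notational: the verification process should carry the term $e^{-\gamma t}\baseS_t V(Y_t,\assetsProcess_t)=e^{-\delta t}M_t V(Y_t,\assetsProcess_t)$ rather than $e^{-\delta t}\baseS_t V(Y_t,\assetsProcess_t)$, consistent with your stated intention that the effective discount against the martingale part is $\delta$.
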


The optimal liquidation strategy is deterministic.
Note that it does not depend on the particular form of the martingale $M$ (what has been noted as a robust property  in related literature).
Since $T<\infty$ is finite, the open horizon control from \cref{thm: optimal strategy} is clearly optimal for the problem on any finite horizon $T'\ge T$; cf.\ \cref{rmk: finite time horizon; no drift; convexity} for $T'<T$.

\begin{remark}
	\cite{PredoiuShaikhetShreve11} consider a similar optimal execution problem, with an additive price impact $\psi$ such that $S_t = \baseS_t + \psi(Y_t)$ with volume effect process $Y_t$ as in \eqref{eq:deterministic Y_t dynamics}. 
	They study the case of martingale $\baseS_t$ on a finite time horizon $[0,T]$.
	The execution costs, which they seek to minimize in expectation, are equal to the negative liquidation proceeds $-L_T$ in our model (for $\gamma,\mu=0$) with fixed $Y_{0-} := 0$.
	See also \cref{rmk: finite time horizon; no drift; convexity} below.
\end{remark}

The next result provides sufficient conditions for optimality  to the problem \eqref{task:maximize expected liquidation proceeds} for each possible initial state $Y_{0-} = y\in \RR$ of the impact process, by the martingale optimality principle. 
In contrast, in the related additive model in \cite{PredoiuShaikhetShreve11} the optimal buying strategy for finite time horizon without drift ($\delta=0$), and impact process starting at zero was characterized using an elegant convexity argument; cf.\ \cref{rmk: finite time horizon; no drift; convexity}.

\begin{proposition}\label{prop: supermatringale suffices}
	Let $V:\RR \times [0,\infty) \to [0,\infty)$ be a continuous function such that $G_t(y;\assetsProcess) := L_t(y;\assetsProcess) + e^{-\gamma t} \baseS_t \cdot V(Y_t,\assetsProcess_t)$, with $Y = Y^\assetsProcess$ and $y=Y_{0-}$, is a supermartingale for each $\assetsProcess \in  \admissibleSellStrategies{\assetsProcess_{0-}}$ and additionally $G_0(y;\assetsProcess) \le G_{0-}(y;\assetsProcess):= \baseS_0 \cdot V(Y_{0-}, \assetsProcess_{0-})$. 
	Then 
	\[
		\baseS_0 \cdot V(y, \theta) \geq v(y,\theta)
	\]
	with $\theta = \assetsProcess_{0-}$.
	Moreover, if there exists $\assetsProcess^{*} \in \admissibleSellStrategies{\assetsProcess_{0-}}$ such that $G(y;\assetsProcess^{*})$ is a martingale and it holds $G_0(y;\assetsProcess^{*}) = G_{0-}(y;\assetsProcess^{*})$, then 
	\(
		\baseS_0 \cdot V(y, \theta) = v(y,\theta)
	\)
	and $v(y,\theta) = J(y;\assetsProcess^{*}).$
\end{proposition}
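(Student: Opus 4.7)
The plan is to invoke the standard martingale optimality principle. For the upper bound, I take an arbitrary $A\in \admissibleSellStrategies{\assetsProcess_{0-}}$ with associated processes $Y=Y^A$ and $\assetsProcess$, and exploit triviality of $\scF_0$ to chain the supermartingale property with the hypothesized estimate at time $0$:
\[
  \baseS_0 V(y,\theta) = G_{0-}(y;A) \ge G_0(y;A) \ge \EE[G_T(y;A)]
\]
for every $T\ge 0$. Decomposing $G_T = L_T(y;A) + e^{-\gamma T}\baseS_T V(Y_T,\assetsProcess_T)$ and discarding the non-negative terminal summand (using $V\ge 0$ and $\baseS\ge 0$) leaves $\EE[L_T(y;A)]\le \baseS_0 V(y,\theta)$. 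Monotone convergence is applicable because $T\mapsto L_T(y;A)$ is non-decreasing (the integrand in~\eqref{def:liquidation proceeds} is non-negative and $A$ is monotone) and the limit $L_\infty(y;A)$ is integrable by \cref{rmk:convergence of block proceeds}, so passing to the limit yields $J(y;A)\le \baseS_0 V(y,\theta)$. Taking the supremum over admissible $A$ gives $v(y,\theta)\le \baseS_0 V(y,\theta)$.

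For the optimality claim, the strengthened hypotheses upgrade the chain to an equality: the martingale property together with $G_0(y;A^*)=G_{0-}(y;A^*)$ gives
\[
  \baseS_0 V(y,\theta) = G_{0-}(y;A^*) = \EE[G_T(y;A^*)] = \EE[L_T(y;A^*)] + \EE\bigl[e^{-\gamma T}\baseS_T V(Y_T,\assetsProcess_T)\bigr]
\]
for every $T\ge 0$. As $T\to\infty$, the first expectation on the right increases to $J(y;A^*)$ by monotone convergence, so the non-negative remainder has a limit $\ell\ge 0$ and $J(y;A^*) = \baseS_0 V(y,\theta)-\ell$.

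The main obstacle is verifying the transversality $\ell=0$, which is the only step not reducible to a routine rearrangement of the previous display. This vanishing is clear in the concrete application to \cref{thm: optimal strategy}: the candidate value function constructed there will satisfy $V(\cdot,0)\equiv 0$ (no remaining assets means no further proceeds) and the optimal strategy $A^*$ completes liquidation at some finite time $T^*<\infty$, whence $\assetsProcess_T=0$ and $V(Y_T,\assetsProcess_T)=0$ for all $T\ge T^*$, so the remainder is identically zero for large $T$. Granting $\ell=0$, the two-sided estimate $J(y;A^*)\le v(y,\theta)\le \baseS_0 V(y,\theta)=J(y;A^*)$ collapses and yields simultaneously $v(y,\theta) = \baseS_0 V(y,\theta)$ and $v(y,\theta) = J(y;A^*)$.
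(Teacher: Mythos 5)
Your first part is fine and essentially the paper's argument, except that you discard the non-negative terminal term $e^{-\gamma T}\baseS_T V(Y_T,\assetsProcess_T)$ using $V\ge 0$, which is a legitimate (slightly more elementary) shortcut for the inequality $v(y,\theta)\le \baseS_0 V(y,\theta)$.

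The gap is in the optimality part: you leave the transversality $\ell=0$ unproven and justify it only by appealing to properties of the \emph{specific} candidate from \cref{thm: optimal strategy} ($V(\cdot,0)\equiv 0$, liquidation in finite time). But the proposition is stated for an arbitrary continuous $V:\RR\times[0,\infty)\to[0,\infty)$ and an arbitrary $A^*\in\admissibleSellStrategies{\assetsProcess_{0-}}$ satisfying the martingale hypotheses; its proof may not borrow structure from the later construction, and indeed \cref{thm: optimal strategy} is proved \emph{by applying} this proposition, so your argument is circular as a proof of the statement as given. The missing step follows directly from the hypotheses, and this is exactly how the paper closes it: for fixed $y=Y_{0-}$ and $\theta=\assetsProcess_{0-}$, every admissible (monotone, bounded by $\theta$) strategy keeps the state process in the compact rectangle $\brackets[\big]{-\abs{y}-\theta,\abs{y}+\theta}\times\brackets[\big]{0,\theta}$ (since $h(0)=0$ and $h$ is increasing, resilience only drives $Y$ toward $0$), so continuity of $V$ gives a uniform bound $V(Y_t,\assetsProcess_t)\le C$, and hence $0\le \EE\brackets[\big]{e^{-\gamma t}\baseS_t V(Y_t,\assetsProcess_t)}\le C e^{-\gamma t}\EE[\baseS_t]=Ce^{-\delta t}\baseS_0\to 0$ because $\delta=\gamma-\mu>0$. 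This yields $\ell=0$ for any $V$ and $A^*$ under the stated assumptions, and with that your two-sided estimate closes correctly. (The same boundedness argument also shows the terminal term vanishes in the first part, which is the paper's route there, though your positivity shortcut suffices for that direction.)
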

\begin{remark}\label{rem:supermartprop}
	The additional condition on $G_0$ and $G_{0-}$ can be regarded as extending the (super-)martingale property from time intervals $[0,T]$ to time \enquote{$0-$}.
\end{remark}
\begin{proof}
	Note that $\EE\brackets[\big]{G_{0-}(y;\assetsProcess)} = G_{0-}(y;\assetsProcess) = \baseS_0 \cdot V(y, \theta)$ and 
	\[
		\EE\brackets[\big]{ G_t(y;\assetsProcess) } = \EE\brackets[\big]{ L_t(y;\assetsProcess) } + \EE\brackets[\big]{ e^{-\gamma t}\baseS_t\cdot V(Y_t, \assetsProcess_t) }
	\]
	for each $t\geq 0$.
	Also, $V(Y_t, \assetsProcess_t)$ is bounded uniformly on $t\geq 0$ and $\assetsProcess \in \admissibleSellStrategies{\assetsProcess_{0-}}$ by a finite constant $C > 0$, because $V$ is assumed to be continuous (and hence bounded on compacts) and the state process $(Y,\assetsProcess)$ takes values in the rectangle
	\(
		\brackets[\big]{ -\abs{y} - \theta, \abs{y} + \theta } \times \brackets[\big]{ 0, \theta }.
	\)
	Hence, 
	\(
		\EE\brackets[\big]{ e^{-\gamma t}\baseS_t\cdot V(Y_t, \assetsProcess_t) } \leq C e^{-\gamma t} \EE\brackets[\big]{ \baseS_t } = C e^{-\delta t} \baseS_0
	\) tends to  $0$ for $t \to \infty$, since $\delta > 0$.
	Since $\EE\brackets{ L_t(y;\assetsProcess) } \to J(y;\assetsProcess)$ as $t \to \infty$ by means of monotone convergence theorem, we conclude that
	\(
		\baseS_0\cdot V(y,\theta)\geq G_0(y;\assetsProcess) \ge \EE\brackets{ G_t(y;\assetsProcess) } \to J(y; \assetsProcess).
	\)
	This implies the first part of the claim. 
	The second part follows analogously.
\end{proof}

\noindent
In order to make use of \cref{prop: supermatringale suffices}, one applies Itô's formula to $G$, assuming that $V$ is smooth enough and using the fact that $[\baseS_\cdot, e^{-\gamma \cdot}V(Y_\cdot, \assetsProcess_\cdot)] = 0$ because $\baseS$ is quasi-left-continuous and $e^{-\gamma \cdot}V(Y_\cdot, \assetsProcess_\cdot)$ is predictable and of bounded variation, to get
\begin{align}\label{eq: Ito for G}
\begin{split}
	\diff G_t = {}&e^{-\delta t} V(Y_{t-}, \assetsProcess_{t-}) \diff M_t 
	\\		& + e^{-\delta t} M_{t-} \biggl( \paren[\big]{-\delta V - h V_y}(Y_{t-}, \assetsProcess_{t-}) \diff t 
	\\		&\qquad\qquad\quad + \paren[\big]{V_y + V_\theta - f}(Y_{t-}, \assetsProcess_{t-}) \diff \assetsProcess^c_t 
	\\		&\qquad\qquad\quad + \int_0^{\Delta \assetsProcess_t}  \paren[\big]{V_y + V_\theta - f}(Y_{t-} + x, \assetsProcess_{t-} + x) \diff x \biggr)
\end{split}
\end{align}
with the abbreviating conventions ${\paren[\big]{-\delta V - h V_y}(a,b) := -\delta V(a,b) - h(a) V_y(a,b)}$ and $\paren[\big]{V_y + V_\theta - f}(a,b) := V_y(a,b) + V_\theta(a,b) - f(a)$.
The martingale optimality principle now suggests equations for regions where the optimal strategy should sell or wait, in that the $\diff \assetsProcess$-integrands should be zero when there is selling and the $\diff t$-integrand must vanish when only time passes (waiting).
We will construct a classical solution to the \emph{variational inequality} 
$\max \{-\delta V - h V_y\,,\, f-\ V_y - V_\theta  \}=0$, that is
a  function $V$ in $C^{1,1}(\RR \times [0,\infty),\RR)$ and a strictly decreasing \emph{free boundary} function $y({\cdot}) \in C^2( [0, \infty),\RR)$, such that
\begin{align}
	-\delta V - h(y) V_y &= 0 &&\hbox{in } \overbar{\scW} \label{eq:var ineq I}
\\ -\delta V - h(y) V_y &< 0 &&\hbox{in  }\ \scS  \label{eq:var ineq II}
\\ V_y + V_\theta &= f(y) &&\hbox{in  }\ \overbar{\scS} \label{eq:var ineq III}
\\ V_y + V_\theta &> f(y) &&\hbox{in }  \scW \label{eq:var ineq IV}
\\ V(y,0) &= 0 &&\forall y\in \RR \label{eq:var ineq boundary cond}
\end{align}
for wait region $\scW$ and sell region $\scS$ (cf. \cref{fig:wait-and-sell-region}) defined as
\begin{align}
\begin{split}
	\scW &:= \{(y,\theta)\in \RR \times [0,\infty) \mid y < y(\theta)\},\\
	\scS &:= \{(y, \theta)\in\RR\times[0,\infty) \mid y> y(\theta)\}.
\end{split}
\end{align}
The optimal liquidation studied here belongs to the class of finite-fuel control problems, which often lead to \emph{free boundary problems} similar to the one derived above. 
See \cite{KaratzasShreve86} for an explicit solution of the finite-fuel monotone follower problem, and \cite{JackJohnsonZervFos08} for further examples and an extensive list of references. 
In the next section, we construct the (candidate) boundary $y(\theta)$ and then the value function $V$, and prove that 
they  solve the desired equations and that the derived control strategy is optimal.

\begin{remark}[On notation]
	We have three a priori independent dimensions at hand: The time $t$, the investor's holdings $\theta$ and her market impact $y$. 
	To assist  intuition, we will overload notation by writing $y(\theta)$ or $y(t)$ for the $y$-coordinate as a function of holdings or of time along the boundary between $\scS$ and $\scW$, instead of introducing various function symbols for the relation between these coordinates. 	Accordingly, the inverse function of $y(\theta)$ is $\theta(y)$. 
		The advantage  is that readers can identify the meaning of individual terms at a glance, 
but have to keep in mind that these are different functions, 
e.g.~when differentiating. 
\end{remark}

\begin{remark}[On deterministic optimal controls]\label{rmk:optimal strategy is deterministic}
We will obtain that optimal strategies are deterministic and the value function is continuous (even differentiable).
This is shown in the subsequent sections by proving \cref{thm: optimal strategy}.
Here, we show directly why non-deterministic strategies are suboptimal for \eqref{eq:val fn mon case} and optimizing over deterministic admissible controls is sufficient.
Yet, finding explicit solutions here
 still requires to construct candidate solutions and prove optimality, as in the sequel.
 
If one considers optimization just over strategies that are to be executed until a time $T < \infty$, then the value function will be the same as if we were optimizing over the subset of deterministic strategies.
Indeed, by optional projection  (see~\cite[VI.57]{DellacherieMeyer82bookB}) we have
\begin{align*}
	\EE[L_T(y;\assetsProcess)] = -\EE\brackets[\Big]{ M_T \int_0^T \! e^{ - \delta t}f(Y_{t-}) \diff \assetsProcess^c_t } - \EE\brackets[\Big]{ M_T \sum_{\mathclap{\substack{0\leq t \leq T \\ \Delta \assetsProcess_t \neq 0}}} e^{ - \delta t}  \int_0^{\Delta \assetsProcess_t} \! f(Y_{t-} + x) \diff x }.
\end{align*}
For any  $T\in [0,\infty)$, letting $\diff \widetilde{\PP} = M_T/ M_0\diff \PP$ on $\scF_T$  yields that $\EE[L_T(y;\assetsProcess)]$ equals $\EE^{\widetilde{\PP}}[ \ell_T(\assetsProcess) ]$ for
\(	\ell_T(\assetsProcess) := -M_0 \int_0^T e^{ - \delta t}f(Y_{t-}) \diff \assetsProcess^c_t - M_0\sum_{0\leq t \leq T\, , \,\Delta \assetsProcess_t \neq 0} e^{ - \delta t}  \int_0^{\Delta \assetsProcess_t} f(Y_{t-} + x).
\)
Note that $\ell$ is a deterministic functional of $\assetsProcess$, and that the measure $\widetilde{\PP}$ does not depend on $\assetsProcess$. 
Thus, optimization for any finite horizon $T$ can be done $\omega$-wise, i.e.\ for the finite-horizon problem optimizing over the subset of  deterministic strategies gives the same value function.
Note that this is similar to \cite[Prop.~7.2]{Lokka12}. 
Using monotonicity of  $L_T$ in $T$, we have $\EE[L_{\infty}(y;\assetsProcess)]$ $= \sup_{T\in [0,\infty)}\EE[L_{T}(y;\assetsProcess)]$, 
hence the change of measure argument above yields that  
$v(y,\theta)
 =\sup_{T \in [0,\infty)}\sup_{\assetsProcess \in \admissibleSellStrategies{\theta}} \EE[ L_T(y;\assetsProcess) ] $
is equal to
\begin{align}
	 \sup_{T \in [0,\infty)}\sup_{\substack{\assetsProcess \in \admissibleSellStrategies{\theta} \\ \text{deterministic}}}\ell_T(\assetsProcess) %
	= \sup_{\substack{\assetsProcess \in \admissibleSellStrategies{\theta} \\ \text{deterministic}}}\ell_\infty(\assetsProcess) \label{eq:determ probl inf horizon}.
\end{align}
Moreover, one can check that any deterministic maximizer  $\assetsProcess^*\in  \admissibleSellStrategies{\theta}  $ to \eqref{eq:determ probl inf horizon} is also  optimal for the original problem \eqref{task:maximize expected liquidation proceeds}, where  $v(y,\theta)<\infty$ thanks to $\delta<0$.

\end{remark}

\begin{remark}%
\label{rmk: finite time horizon; no drift; convexity}
For a given finite horizon $T<\infty$, the execution problem with general order book shape has been solved by \cite{PredoiuShaikhetShreve11} for additive price impact and no drift ($\delta=0$).  
The problem with multiplicative impact could be transformed to the additive situation using  intricate state-dependent order book shapes, cf.~\cite{Lokka12}.
Let us show how a convexity argument as in \cite{PredoiuShaikhetShreve11} can be applied also directly to solve the finite horizon case in the multiplicative setup when the drift $\delta$ is zero, but not for $\delta \ne 0$. 

By \cref{rmk:optimal strategy is deterministic} it suffices to consider deterministic strategies $\assetsProcess \in \admissibleSellStrategies{\assetsProcess_{0-}}$.
Let $F(y) = \int_0^y f(x) \diff x$. 
For deterministic $\assetsProcess$ and $g(x):= f(h^{-1}(x))x + \delta F(h^{-1}(x))$ we have
\begin{equation} \label{eq: deterministic proceeds; finite time horizon with drift}
	\EE[L_T(\assetsProcess)] = F(Y_{0-}) - e^{\delta T} F(Y_T) - \int_0^T e^{-\delta t} g(h(Y_t)) \diff t\,.
\end{equation}
Since $g'(x) = \bp{ f (h\lambda + h' + \delta) / h' }(h^{-1}(x))$, the function $g$ obtains global maximum at $h(y_\infty)$ and is decreasing on the left and increasing on the right.
So its convex hull $\hat g(x) = \{ \ell(x) \mid \ell \le g \text{ is affine} \}$ exists.
With $C_{\delta,T} := \int_0^T e^{-\delta t} \diff t$ Jensen's inequality yields
\begin{equation} \label{ineq: deterministic proceeds; finite time horizon with drift; Jensen}
	\EE[L_T(\assetsProcess)] \le F(Y_{0-}) - e^{\delta T} F(Y_T) - C_{\delta,T} \cdot \hat g\paren[\bigg]{ \int_0^T h(Y_t) \frac{e^{-\delta t}}{C_{\delta,T}} \diff t }
\end{equation}
and we have equality in \eqref{ineq: deterministic proceeds; finite time horizon with drift; Jensen} if and only if $h(Y_t)$ stays constant in the interval where $\hat g$ and $g$ coincide for almost all $t \in (0,T)$.
Such impact-fixing strategies are analogous to the \emph{type~A strategies} of \cite{PredoiuShaikhetShreve11}.

The integral in \eqref{ineq: deterministic proceeds; finite time horizon with drift; Jensen} can be solved in general only for $\delta = 0$.
In that case $C_{\delta,T} = T$ and $\int_0^T h(Y_t) \diff t = Y_{0-} - \assetsProcess_{0-} - Y_T$ for any strategy that liquidates until time $T$, so
\begin{equation}
	\EE[L_T(\assetsProcess)] \le F(Y_{0-}) - F(Y_T) - T \hat g\paren[\bigg]{ \frac{Y_{0-} - \assetsProcess_{0-} - Y_T}{T} }=: \hat G(Y_T)\,.
\end{equation}
Let $e(y):= (Y_{0-} - \assetsProcess_{0-} - y) / T$.
Direct calculations show that the concave function $\hat G$ is decreasing at $e^{-1}(h(y_\infty))$ and (if $Y_{0-} \le \assetsProcess_{0-}$) non-decreasing at $e^{-1}(h(y_0))$.
So $\hat G$ obtains a global maximum at some $y^* \in [e^{-1}(h(y_0)), e^{-1}(h(y_\infty))]$.
Straight forward calculations show that $\hat g = g$ on $[h(y_\infty), h(y_0)]$.

Consider the \emph{type~A strategy} $\assetsProcess^*$ which performs an initial block trade $\Delta \assetsProcess^*_0 = Y_0 - Y_{0-}$ to reach impact level $Y_0 := h^{-1}\bp{ e(y^*) }$, trades continuously at rate $\!\diff \assetsProcess^*_t/\!\diff t = h(Y_0) = e(y^*)$ until time $T$ and finishes with a block trade of size $\Delta\assetsProcess^*_T = y^* - Y_0$ reaching impact level $Y_T = y^*$.
By construction, we have $\EE[L_T(\assetsProcess^*)] = \hat G(y^*)$, so $\assetsProcess^*$ is optimal (if $Y_{0-} \le \assetsProcess_{0-}$).
\end{remark}

\section{Solving the free boundary problem}\label{sec:Free boundary problem}

In the next two subsections, we construct an explicit solution to our free boundary problem of finding $\widebar \scW \cap \widebar \scS = \{ (y(\theta),\theta) \mid \theta \ge 0 \} = \{ (y,\theta(y)) \mid \dots \}$.
We will find that under \cref{cond:model parameters} the optimal strategy is described by the free boundary with
\begin{equation}\label{eq:theta' common}
	\theta'(y) = 1 + \frac{h(y) \lambda(y)}{\delta} - \frac{h(y)h''(y)}{\delta h'(y)} + \frac{h(y) \paren[\big]{h\lambda + h' + \delta}'(y)}{\delta \paren[\big]{h\lambda + h' + \delta}(y)}
\end{equation}
for $y$ in some appropriate interval $(y_\infty, y_0]$ and $\theta(y_0) = 0$, see \cref{fig:wait-and-sell-region} for a graphical visualization.
In \cref{sec:prop of the candidate boundary} we verify that \eqref{eq:theta' common} defines a monotone boundary with a vertical asymptote, and in \cref{subsection: Construction of V and the optimal strategy} we construct $V$ solving the free boundary problem \eqref{eq:var ineq I} -- \eqref{eq:var ineq boundary cond}, completing the verification of the optimal liquidation problem.

\subsection{Smooth-pasting approach}\label{subsection: smooth-pasting}

Following the literature on finite-fuel stochastic control problems, cf.\ e.g.~\cite[Section~6]{KaratzasShreve86}, we apply  the principle of smooth fit to derive a candidate boundary given by \eqref{eq:theta' common} dividing the sell region and the wait region.
To this end, let us at first assume that a solution $(V, y(\cdot))$ is already constructed and is sufficiently smooth along the free boundary.
This will permit to derive by algebraic arguments the (candidate) free boundary and the function $V$ on it. 
\cref{subsection: Construction of V and the optimal strategy} will verify that this approach provides indeed the construction of a classical solution to the free boundary problem.

The first guess we make is that the wait region $\overbar{\scW}$ is contained in $\{(y, \theta):\ y < c\}$ for some $c < 0$.
In this case, the %
solution to \eqref{eq:var ineq I} in the wait region would be of the form 
\begin{equation}\label{eq:V in wait region}
	V(y,\theta) = C(\theta) \exp\paren[\bigg]{ \int_c^y \frac{-\delta}{h(x)}\diff x }, \qquad (y,\theta)\in \overbar{\scW},
\end{equation}
where ${C:[0,\infty) \to [0,\infty)}$.
To shorten further terms, let
\(
	\phi(y):= \exp\paren[]{\int_c^y \frac{-\delta}{h(x)}\diff x}, \; y \le c.
\)
Suppose that $C$ is continuously differentiable.
Calculating the directional derivative ${V_y + V_\theta}$ and the expression ${V_{\theta y} + V_{yy}}$ 
in the wait region,
 we obtain for ${(y, \theta)\in \scW}$ that
\begin{align}
	V_y(y,\theta) + V_\theta(y,\theta) &= -\delta C(\theta)\phi(y)/h(y)  + C'(\theta)\phi(y),
\\	V_{\theta y}(y,\theta) + V_{yy}(y,\theta) &=  -\delta C'(\theta) \phi(y)/h(y) + \delta C(\theta) \phi (y)h^{-2}(y)(\delta + h'(y)).
\end{align}
On the other hand, the same expressions computed in the sell-region yield (for $(y,\theta)\in \scS$)
\begin{align*}
	V_y(y,\theta) + V_\theta(y,\theta) = f(y)\quad \text{and} \quad	V_{\theta y}(y,\theta) + V_{yy}(y,\theta) = f'(y).
\end{align*}
Now, suppose that $V$ is a $C^{2,1}$-function.
In particular, we must have for $y = y(\theta)$:
\begin{equation}\label{eq:C^2 smooth pasting boundary}
\left\{
	\begin{aligned}
		f(y) &=  -\delta C(\theta)\phi(y)/h(y)  + C'(\theta)\phi(y),
	\\	f'(y) &=  -\delta C'(\theta) \phi(y)/h(y) + \delta C(\theta) \phi (y)h^{-2}(y)(\delta + h'(y)).
	\end{aligned} \right.
\end{equation}
Solving \eqref{eq:C^2 smooth pasting boundary} as a linear system for $C(\theta)$ and $C'(\theta)$, we get at $y= y(\theta)$:
\begin{equation}\label{eq:C on the boundary}
\left\{
	\begin{aligned}
		C(\theta) &=  f(y)\cdot\frac{1}{\phi(y)}\cdot h(y)\frac{\delta + h(y)\lambda(y)}{\delta h'(y)} &=: M_1(y),
	\\	C'(\theta) &=  f(y)\cdot\frac{1}{\phi(y)}\cdot \frac{\delta + h(y)\lambda(y) + h'(y)}{h'(y)} &=: M_2(y).
	\end{aligned} \right.
\end{equation}
Now, \eqref{eq:C on the boundary} means that we should have
\(
	C(\theta(y)) = M_1(y) \quad\text{and}\quad C'(\theta(y)) = M_2(y),
\)
on the boundary, with $\theta(\cdot)$ being the inverse function of $y(\cdot)$ (in domains of definition to be specified later).
By the chain rule, we get 
\(
	M_1'(y) = C'(\theta(y)) \cdot \theta'(y),
\)
and therefore 
\begin{equation}\label{eq:boundary derivative}
	\theta'(y) = \frac{M_1'(y)}{M_2(y)} = \paren[\bigg]{ \frac{\squeeze[2]{(\delta + 2h\lambda)(h')^2 + (\delta^2 + 2\delta h\lambda + h^2\lambda^2 + h^2 \lambda')h' - h(\delta + h\lambda)h''}}{\delta h'(\delta + h\lambda + h')}}\!(y)
\end{equation}
whenever $\theta(\cdot)$ is defined.
Note that the right-hand sides of \eqref{eq:theta' common} and \eqref{eq:boundary derivative} are equal.

To derive the domain of definition of $\theta(\cdot)$, we use the boundary condition \eqref{eq:var ineq boundary cond} together with \eqref{eq:V in wait region} and \eqref{eq:C on the boundary} to get that $y_0:= y(0) = \theta^{-1}(0)$ solves $\delta + h(y_0)\lambda(y_0) = 0$.
The denominator in \eqref{eq:boundary derivative} suggests that $y_\infty$ solving $\delta + h(y_\infty)\lambda(y_\infty) + h'(y_\infty) = 0$ is a vertical asymptote of the boundary.
Note that \cref{cond:model parameters} implies that $y_\infty < y_0 < 0$ and in particular, we may chose $c\in (y_0, 0)$ at the beginning of this section.
The discussion so far suggests to define a candidate boundary as follows: for $y\in (y_\infty, y_0]$ set
\begin{equation}\label{eq:def of boundary}
	\theta(y) := -\int_y^{y_0} \paren[\bigg]{ \frac{\squeeze[2]{ (\delta + 2h\lambda)(h')^2 + (\delta^2 + 2\delta h\lambda + h^2\lambda^2 + h^2 \lambda')h' - h(\delta + h\lambda)h'' }}{\delta h'(\delta + h\lambda + h')} }\!(x) \diff x.
\end{equation}
We verify in \cref{lemma: boundary} that \eqref{eq:def of boundary} defines a decreasing boundary with $\lim_{y \searrow y_\infty} \theta(y) = +\infty$ and $\theta(y_0) = 0$.
Having a candidate boundary, we can construct $V$ in the wait region $\overbar{\scW}$ in the form \eqref{eq:V in wait region} using \eqref{eq:C on the boundary}, and in the sell region $\scS$ using the directional derivative \eqref{eq:var ineq III}.
In \cref{subsection: Construction of V and the optimal strategy} we prove  that this construction gives a solution to the free boundary problem \eqref{eq:var ineq I} -- \eqref{eq:var ineq boundary cond} and, consequently, to the optimal control problem.

\subsection{Calculus of variation approach}
In this section, we present another approach for finding a candidate optimal boundary by means of calculus of variations. 
Moreover, this gives an explicit description of the time to liquidation along that boundary via~\cref{eq:euler lagrange simplified}, which is not available with the smooth pasting approach above.
To describe the task of finding the optimal boundary as 
an  isoperimetric problem from calculus of variations, we postulate that the optimal strategy is deterministic
(so may assume w.l.o.g.\ $M_t/M_0=1$, cf.\  \cref{rmk:optimal strategy is deterministic})
and that it will liquidate all $\assetsProcess_{0-}$ risky assets within finite time $T:= \inf \{ t \ge 0 \mid \assetsProcess_t = 0 \} < \infty$.
It will be convenient to consider the time to complete liquidation (TTL)
\(
	\tau = T - t
\)
and search for a strategy $\assetsProcess_t = \thetabar(\tau)$ along the boundary $(\ybar(\tau), \thetabar(\tau)) \in \widebar \scW \cap \widebar \scS$, assuming $C^1$-smoothness of that boundary.
By~\eqref{eq:deterministic Y_t dynamics} we have 
\begin{equation}
	\thetabar'(\tau) = \ybar'(\tau) - h(\ybar(\tau)) \label{eq:diff thetabar}
\end{equation}
for the 
function $\ybar(\tau) = y(t) = Y_t$.
So the optimization problem \eqref{task:maximize expected liquidation proceeds} translates to finding $\ybar : [0,\infty) \to \RR$ which maximizes
\begin{align}
	&\widebar J(\ybar) := \int_0^T f(\ybar(\tau)) e^{-\delta (T-\tau)} \paren[\big]{ \ybar'(\tau) - h(\ybar(\tau)) } \diff \tau \label{task:maximize J(y)}
		=: \int_0^T \widebar F(\tau, \ybar(\tau), \ybar'(\tau)) \diff \tau \\
\intertext{with subsidiary condition}
	&\theta = \widebar K(\ybar) := \int_0^T  \paren[\big]{ \ybar'(\tau) - h(\ybar(\tau)) } \diff \tau =: \int_0^T \widebar G(\tau, \ybar(\tau), \ybar'(\tau)) \diff \tau 
\end{align}
for fixed position $\theta:= \assetsProcess_{0-}$.
The Euler equation of this isoperimetric problem is
\begin{equation}
	\widebar F_{\ybar} - \frac{\diff}{\diff \tau} \widebar F_{\ybar'} + \lagrangeMult \paren[\Big]{ \widebar G_{\ybar} - \frac{\diff}{\diff t} \widebar G_{\ybar'} } = 0 \label{eq:euler lagrange general}
\end{equation}
with Lagrange multiplier $\lagrangeMult = \lagrangeMult(T)$.
However, terminal time $T = T(\theta)$, final state $\ybar(0)$ and initial state $\ybar(T)$ are still unknown.
A priori, the final state $\ybar(0)$ is free, which leads to the \emph{natural boundary condition}
\begin{equation}\label{eq:natural boundary condition}
	\Bigl. \widebar F_{\ybar'} + \lagrangeMult \widebar G_{\ybar'} \Bigr|_{\tau=0} = 0.
\end{equation}
With $y_0 := \ybar(0)$ and $y:= \ybar(\tau)$, \cref{eq:natural boundary condition} simplifies to
$\lagrangeMult = -f(y_0) e^{-\delta T}$, and 
\begin{align}\label{eq:euler lagrange simplified}
	 0 &= f(y_0) h'(y) - f(y) e^{\delta \tau} \paren[\big]{ h(y)\lambda(y) + h'(y) + \delta }
\end{align}
follows from  \cref{eq:euler lagrange general}.
Note that the terms involving $\ybar'$ appearing in $\widebar F_{\ybar}$ and $\frac{\diff}{\diff \tau} \widebar F_{\ybar'}$ cancel each other.
Solutions $\ybar_1$ and $\ybar_2$ for time horizons (TTL) $T_1 < T_2$ should coincide for $\tau \in [0,T_1]$, because an optimal (Markov) strategy should depend only on the current position ${\theta = \thetabar(T)}$ and market impact $\ybar(T)$, but not on the past.
In particular, $y_0$ is independent of $T$.
So for $\tau = 0$ we get $h(y_0) \lambda(y_0) + \delta = 0$, justifying the notation $y_0$ as in \cref{cond:model parameters}.
Existence and uniqueness of such $y_0$ is guaranteed by \cref{cond:model parameters}.
It must hold that $y_0 < 0$, because $\lambda > 0$ and $h(y) < 0 \Leftrightarrow y < 0$.
Rearranging \eqref{eq:euler lagrange simplified} gives an explicit description for the time to liquidation along the boundary:
\begin{equation} \label{eq: exp TTL of y}
	e^{-\delta \tau} = \frac{f(y)}{f(y_0)} \frac{h(y)\lambda(y) + h'(y) + \delta}{h'(y)}.
\end{equation}
This defines $\tau \mapsto \ybar(\tau)$ implicitly.
Together with $\thetabar(\tau) = \int_0^\tau \paren[\big]{ \ybar'(\tau) - h(\ybar(\tau)) } \diff \tau$, this function describes the free boundary as a parametric curve.
Differentiating \cref{eq:euler lagrange simplified} with respect to $\tau$, we get
\begin{align}
	\begin{split}
		0 &= f(y_0) h''(\ybar(\tau)) \ybar'(\tau)
		 - f'(\ybar(\tau)) \ybar'(\tau) e^{\delta \tau} \paren[\big]{ h\lambda + h' + \delta }(\ybar(\tau))
	\\		&\quad - \delta f(\ybar(\tau)) e^{\delta \tau} \paren[\big]{ h\lambda + h' + \delta }(\ybar(\tau))
		 - f(\ybar(\tau)) e^{\delta \tau} \paren[\big]{ h'\lambda + h\lambda' + h'' }(\ybar(\tau)) \ybar'(\tau).
\nonumber
	\end{split}
\end{align}
Thus, for $y=\ybar(\tau)$ we obtain
\begin{align}
	\ybar'(\tau) &= \frac{\delta f(y) \paren[\big]{ h\lambda + h' + \delta }(y)}{f(y_0) h''(y) e^{-\delta\tau} - f'(y) \paren[\big]{ h\lambda + h' + \delta }(y) - f(y) \paren[\big]{ h'\lambda + h\lambda' + h'' }(y)} \label{eq:y'(tau) using exp(delta tau)},
\intertext{if the denominator is nonzero. Also note that}
	\ybar'(0) &= \frac{- \delta h'(y_0)}{ h'(y_0)\lambda(y_0) + \paren[\big]{ h\lambda }'(y_0)	} < 0
\end{align}
by \cref{cond:model parameters} as $h' > 0$, $(h \lambda)' > 0$ and $\lambda > 0$.
Hence, there exists a maximal $T_\infty \in (0,\infty]$ such that $\ybar'(\tau) < 0$ for $\tau \in [0,T_\infty)$, so $\ybar$ is bijective there.
Call $\tau(y):= \ybar^{-1}(y)$ its inverse and let ${y_\infty := \lim_{\tau \nearrow T_\infty} \ybar(\tau) < y_0}$.
By \eqref{eq: exp TTL of y}, \cref{eq:y'(tau) using exp(delta tau)} simplifies to
\begin{equation} \label{eq:y'(tau) simplified}
	\ybar'(\tau) = \frac{\delta \paren[\big]{h\lambda + h' + \delta}(y) h'(y)}{\paren[\big]{h'' - h' \lambda }(y) \paren[\big]{h\lambda + h' + \delta}(y) - \paren[\big]{h'\lambda + h \lambda' + h''}(y) h'(y)}
\end{equation}
for ${y = \ybar(\tau)}$.
By definition of $T_\infty$ and $y_\infty$, we see that $\ybar'(\tau)$ is negative on $[0,T_\infty)$ and $0$ at $y=y_\infty$.
Hence $h(y_\infty) \lambda(y_\infty) + h'(y_\infty) + \delta = 0$, which justifies the notation $y_\infty$ as in \cref{cond:model parameters}, according to which such a unique solution $y_\infty < y_0$ exists.
An ODE for $\theta(y)$ on $y \in (y_\infty, y_0]$ is obtained from \eqref{eq:diff thetabar} via
\begin{align}
	\theta'(y) &= \frac{\diff}{\diff y} \thetabar(\tau(y)) = \thetabar'(\tau(y))\tau'(y) \notag
= \paren[\big]{\ybar'(\tau(y)) - h(y)} \frac{1}{\ybar'(\tau(y))} = 1 - \frac{h(y)}{\ybar'(\tau(y))} \notag
\\		&= 1 - \frac{h(y)}{\delta h'(y)} \paren[\big]{h'' - \lambda h'}(y) + \frac{h(y) \paren[\big]{h \lambda + h' + \delta}'(y)}{\delta \paren[\big]{h \lambda + h' + \delta}(y)} \label{eq:theta' almost}
\end{align}
with $\theta(y_0)=0$.
We also note that \eqref{eq:theta' almost} equals \eqref{eq:theta' common}.

\subsection{Properties of the candidate for the free boundary}
\label{sec:prop of the candidate boundary}

To justify some assumptions in the analysis above, we verify here  previously presumed properties for the candidate boundary, especially bijectivity of $\theta : (y_\infty, y_0] \to [0,\infty)$.

\begin{figure}[ht]%
	\centering
	\includegraphics[width=0.66\textwidth]{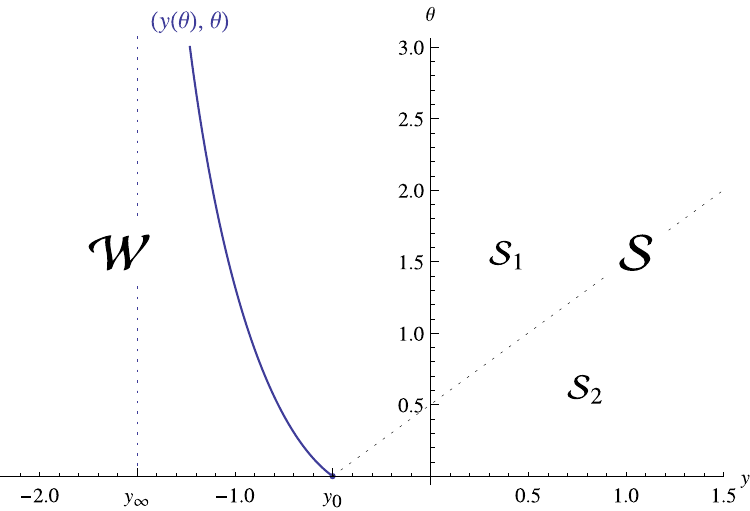}
	\caption{The division of the state space, for $\delta=0.5$, $h(y)=y$ and $\lambda(y) \equiv 1$.}
	\label{fig:wait-and-sell-region}
\end{figure}

\begin{lemma}\label{lemma: boundary}
	The function $\theta:(y_\infty, y_0] \to \RR$ defined in \eqref{eq:def of boundary} is a strictly decreasing $C^1$-function that maps bijectively $(y_\infty, y_0]$ to $[0, \infty)$ with $\theta(y_0) = 0$ and $\lim_{y \searrow y_{\infty}} \theta(y) = +\infty$.
\end{lemma}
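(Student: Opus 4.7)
The plan is to prove the four assertions (the $C^1$ property, strict monotonicity, $\theta(y_0) = 0$, and the blow-up at $y_\infty$) in order, all by analyzing the integrand in \eqref{eq:def of boundary}, which equals $\theta'(y)$ as given in \eqref{eq:theta' common}. First I would record the sign information that holds throughout $(y_\infty, y_0]$ under Assumption~\ref{cond:model parameters}: since $(h\lambda)' > 0$, the function $y \mapsto (h\lambda+\delta)(y)$ is strictly increasing with unique zero $y_0$, so $(h\lambda+\delta)(y) \le 0$ on $(y_\infty, y_0]$ with equality only at $y_0$; and since $(h\lambda+h'+\delta)' = (h\lambda)' + h'' > 0$, the function $(h\lambda+h'+\delta)$ is strictly increasing with unique zero $y_\infty$, so $(h\lambda+h'+\delta)(y) > 0$ strictly on $(y_\infty, y_0]$. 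Moreover $y_\infty < y_0 < 0$ (evaluating $h\lambda+\delta$ at $0$ gives $\delta > 0$), so $h(y) < 0$ and $h'(y) > 0$ on the entire interval. These observations immediately give that the integrand in \eqref{eq:def of boundary} is continuous on $(y_\infty, y_0]$ (denominator nonzero), whence $\theta$ is $C^1$ with $\theta(y_0) = 0$ trivially from the definition.

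For the strict monotonicity, the key step is to rewrite \eqref{eq:theta' common} in a form where the sign of each piece is transparent. Combining the last two terms over a common denominator and using $(h\lambda+h'+\delta)' = (h\lambda)' + h''$, I obtain
\begin{equation*}
	\theta'(y) = \frac{(h\lambda+\delta)(y)}{\delta} + \frac{h(y)}{\delta}\cdot\frac{h'(y)(h\lambda)'(y) - h''(y)(h\lambda+\delta)(y)}{h'(y)(h\lambda+h'+\delta)(y)}.
\end{equation*}
The first summand is $\le 0$ by the observations above, with equality only at $y = y_0$. In the second summand, the prefactor $h(y)/\delta$ is strictly negative, while the fraction is strictly positive: its denominator is positive, and its numerator is a sum of a strictly positive term $h'(h\lambda)' > 0$ and the non-negative term $-h''(h\lambda+\delta) \ge 0$ (product of the non-negative $h''$ with the non-positive $h\lambda+\delta$, with a minus sign). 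Hence the second summand is strictly negative throughout $(y_\infty, y_0]$. In particular $\theta'(y_0) < 0$ even though the first summand vanishes there, and $\theta'(y) < 0$ on all of $(y_\infty, y_0]$, which gives strict decrease and bijectivity onto $[0,\theta(y_\infty^+))$.

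To identify $\theta(y_\infty^+) = +\infty$, it suffices to show $\int_{y_\infty}^{y_0}\paren{-\theta'(x)}\diff x = +\infty$. The first and second-to-last terms of \eqref{eq:theta' common} remain bounded as $y \searrow y_\infty$, so the only possible divergence comes from the last term. A Taylor expansion gives $(h\lambda+h'+\delta)(y) = (h\lambda+h'+\delta)'(y_\infty)\,(y - y_\infty) + o(y-y_\infty)$ with a strictly positive coefficient; substituting into the last term of \eqref{eq:theta' common} yields the asymptotic
\begin{equation*}
	\theta'(y) \sim \frac{h(y_\infty)}{\delta\,(y - y_\infty)} \qquad (y \searrow y_\infty),
\end{equation*}
with $h(y_\infty)/\delta < 0$. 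Therefore $-\theta'(y)$ has a non-integrable simple-pole singularity at $y_\infty$, so $\theta(y) = \int_y^{y_0}(-\theta'(x))\diff x \to +\infty$, completing the proof.

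The routine checks (continuity of integrand, $\theta(y_0)=0$) are trivial; the only non-trivial step is the algebraic regrouping that exhibits $\theta'(y)$ as a sum of two manifestly non-positive terms, and I expect this rearrangement to be the only place where Assumption~\ref{cond:model parameters} (in particular $(h\lambda)'>0$ and $h''\ge 0$) is used in a substantive way.
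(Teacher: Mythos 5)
Your proof is correct and follows essentially the same route as the paper: your two-term regrouping of $\theta'$ is exactly the paper's splitting of the numerator of \eqref{eq:boundary derivative} (divide their four grouped terms by the common denominator $\delta h'(h\lambda+h'+\delta)$), and the sign facts invoked ($h<0$, $h'>0$, $h''\ge 0$, $(h\lambda)'>0$, $h\lambda+\delta\le 0$, $h\lambda+h'+\delta>0$ on $(y_\infty,y_0]$) are the same. The only cosmetic difference is in the blow-up step, where you obtain the simple-pole asymptotic $\theta'(y)\sim h(y_\infty)/(\delta(y-y_\infty))$ via a first-order Taylor expansion of $h\lambda+h'+\delta$ at its zero, while the paper gets the same non-integrable $1/(y-y_\infty)$ behaviour from a mean-value bound on the denominator together with a uniform negative bound on the numerator; both rest on the same ingredients, so no gap.
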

\begin{proof}
	By \cref{cond:model parameters} we have that $h' > 0$ and $y\mapsto (\delta + h \lambda + h')(y)$ is strictly increasing, giving that the denominator in \eqref{eq:boundary derivative} is strictly positive when $y > y_\infty$.
	Thus, to verify that $\theta$ is decreasing it suffices to check that the numerator in \eqref{eq:boundary derivative} is negative.
	For this, we write the numerator as
	\[
		(\delta + h \lambda)(h')^2 + (\delta + h\lambda)^2 h' + h h'(h\lambda)' - h(\delta + h\lambda)h''.
	\]
	Note that $(\delta + h \lambda)(h')^2 + (\delta + h\lambda)^2 h' = (\delta + h\lambda)(\delta + h' + h\lambda)h' < 0$ for all $y \in (y_\infty, y_0)$, since $y_0 < 0$.
	Hence, $\theta'(y) < 0$ for $y\in (y_\infty, y_0)$ follows by 
	\begin{align*}
		h h'(h\lambda)' - h(\delta + h\lambda)h'' 
		&< h h'(h\lambda)' - h(\delta + h\lambda)h'' - h(\delta + h' + h\lambda)(h\lambda)'
	\\
		&= - h(\delta + h\lambda)(h \lambda + h')' < 0.
	\end{align*}
	It is clear that $\theta$ defined in \eqref{eq:def of boundary} is $C^1$.
	So it remains to verify $\lim_{y\searrow y_{\infty}} \theta(y) = +\infty$.
	Note that the arguments above actually show that the numerator of the integrand is bounded from above by a constant $c < 0$ when $x\in [y_{\infty}, y_0]$.
	Also, since the derivative of the denominator is bounded on $[y_\infty, y_0],$ we have by the mean value theorem 
	\[
		0\leq \delta \paren[\big]{h' (\delta +h\lambda h')}(x)\leq C (x-y_{\infty}),\quad x\in (y_{\infty}, y_0],
	\]
	for a finite constant $C>0$.
	Thus, we can estimate
	\[
		\theta(y) \geq \int_y^{y_0}\frac{-c}{C(x-y_{\infty})} \diff x  = \frac{-c}{C} \paren[\big]{\log(y_0 - y_\infty) - \log (y - y_\infty)}\quad \forall y\in(y_{\infty}, y_0],
	\]
	which converges to $+\infty$ as $y\searrow y_{\infty}$.
	This finishes the proof.
\end{proof}

\subsection{Constructing the value function and the optimal strategy} \label{subsection: Construction of V and the optimal strategy}
\label{subsec: constr soln}
The smooth pasting approach directly gives the value function $V$ along the boundary as
\begin{equation} \label{eq: V boundary}
	V(y(\theta),\theta) = \Vboundary(\theta) := \left. f(y) h(y) \frac{\delta + h(y)\lambda(y)}{\delta h'(y)} \right|_{y=y(\theta)}
\end{equation}
via \cref{eq:V in wait region,eq:C on the boundary}.
In the calculus of variations approach, we get~\eqref{eq: V boundary} as the solution to \eqref{task:maximize J(y)} after inserting \cref{eq: exp TTL of y}, doing a change of variables with \eqref{eq:diff thetabar} and applying \cref{lemma:integral for Vboundary}.
By \cref{eq:var ineq I}, we can extend $V$ into the wait region:
\begin{align}
	V(y,\theta) = V^{\scW}(y,\theta)&:= \Vboundary(\theta) \exp\paren[\bigg]{\int_{y(\theta)}^y \frac{-\delta}{h(x)} \diff x} \notag
\\		&= \paren[\bigg]{ \frac{f h \, (\delta + h\lambda)}{\delta h'} }\!(y(\theta)) \exp\paren[\bigg]{\int_y^{y(\theta)} \frac{\delta}{h(x)} \diff x} \label{eq:V wait}
\end{align}
for $(y,\theta) \in \widebar\scW$.
Using \cref{eq:var ineq III} we get $V$ inside ${\scS_1 := \scS \cap \{ (y,\theta) \mid y < y_0 + \theta \}}$ as follows.
For ${(y,\theta) \in \widebar\scS_1}$ let $\Delta:= \Delta(y,\theta)$ be the $\norm{\cdot}_\infty$-distance of~$(y,\theta)$ to the boundary in direction~$(-1,-1)$, i.e.
\begin{equation} \label{def:distance to boundary}
	\theta = \theta_* + \Delta, \qquad y = y(\theta_*) + \Delta, \qquad \Delta \ge 0.
\end{equation}
We then have for $y(\theta) \le y \le y_0 + \theta$, that
\begin{align}
	V(y,\theta) = V^{\scS_1}(y,\theta) &:= \Vboundary(\theta_*) + \int_0^\Delta f(y_1 + x) \diff x \label{eq: unsimplified V in difficult sell region}
\\		&= \paren[\bigg]{ \frac{f h \, (\delta + h\lambda)}{\delta h'} }\!(y - \Delta) + \int_{y-\Delta}^y f(x)\diff x. \label{eq: V in difficult sell region}
\end{align}
Similarly, with \cref{eq:var ineq boundary cond} we obtain $V$ in $\scS_2:= \scS \backslash \widebar \scS_1$, i.e.~for $y \ge y_0 + \theta$:
\begin{equation} \label{eq: V in simple sell region}
	V(y,\theta) = V^{\scS_2}(y,\theta) := \int_{y-\theta}^y f(x)\diff x.
\end{equation}
Since $\Vboundary(0)=0$, we can combine $V^{\scS_1}$ and $V^{\scS_2}$ by extending ${\Delta(y,\theta) := \theta}$ inside $\scS_2$.
So $\Delta:= \Delta(y,\theta)$ is the $\norm{\cdot}_\infty$-distance in direction $(-1,-1)$ of the point $(y,\theta) \in \scS$ to $\partial \scS$ and
\begin{equation} \label{eq: V in sell region}
	V(y,\theta) = V^{\scS}(y,\theta) := \Vboundary(\theta - \Delta) + \int_{y-\Delta}^y f(x)\diff x
\end{equation}
for all $(y,\theta) \in \scS$.
But note that $y(\theta - \Delta) = y - \Delta$ only holds in $\widebar \scS_1$, not in $\scS_2$.
After resuming the properties of $V$ in the next lemma (proved in \cref{appendix:proofs about value functions}), we can prove our main result \cref{thm: optimal strategy}.

\begin{lemma}\label{lemma: V continuously differentiable}
	The function $V: \RR \times [0,\infty) \to \RR$ with
	\[
		V(y,\theta) = \begin{cases} 
			\Vboundary(\theta-\Delta) + \int_{y-\Delta}^y f(x)\diff x, & \text{for $y \ge y(\theta)$,} 
		\\	\Vboundary(\theta) \cdot \exp\paren[\big]{\int_{y(\theta)}^y \frac{-\delta}{h(x)} \diff x}, & \text{for $y \le y(\theta)$,} 
		\end{cases}
	\]
	as defined by \cref{eq: V boundary,eq:V wait,eq: V in sell region} is in $C^{1}(\RR \times [0,\infty))$ and solves the free boundary problem \eqref{eq:var ineq I} -- \eqref{eq:var ineq boundary cond}.
\end{lemma}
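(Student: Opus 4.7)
The plan is to decompose the verification into four stages: well-definedness of the piecewise formulas, validation of the two PDEs \eqref{eq:var ineq I} and \eqref{eq:var ineq III} inside $\scW$ and $\scS$, $C^1$-matching across the free boundary (and across the interface $\{y=y_0+\theta\}$ separating $\scS_1$ from $\scS_2$), and finally the boundary condition $V(\cdot,0)=0$ together with the strict inequalities \eqref{eq:var ineq II} and \eqref{eq:var ineq IV}. Since the construction was engineered by smooth pasting, the first three stages amount to checking that the algebraic identities used to derive the boundary, notably \eqref{eq:C on the boundary}, do what they were designed to do.

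For the first two stages, I would invoke \cref{lemma: boundary} to get the $C^1$-bijection $\theta\leftrightarrow y$ between $[0,\infty)$ and $(y_\infty,y_0]$. In $\scS_1$, $\Delta=\Delta(y,\theta)$ is the unique nonnegative solution of the implicit relation $\theta_\ast - y(\theta_\ast) = \theta - y$ with $\theta_\ast=\theta-\Delta$; existence, uniqueness and $C^1$-dependence follow because $\theta_\ast\mapsto\theta_\ast-y(\theta_\ast)$ has strictly positive derivative $1-y'(\theta_\ast)$ by \cref{lemma: boundary}. Differentiating this relation yields the key identity $\partial_y\Delta + \partial_\theta\Delta = 1$, which gives \eqref{eq:var ineq III} by the telescoping
\[
V^{\scS}_y + V^{\scS}_\theta = f(y) + \bigl(\Vboundary'(\theta-\Delta) - f(y-\Delta)\bigr)(1-\partial_y\Delta-\partial_\theta\Delta) = f(y);
\]
in $\scS_2$ the identity is immediate from the explicit antiderivative formula. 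Equation \eqref{eq:var ineq I} in $\scW$ follows at once from the exponential form of $V^\scW$.

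For the third stage, continuity across the free boundary is direct because $\Delta=0$ there. For $C^1$-matching, I would first verify the smooth-pasting identity $V^\scW_y + V^\scW_\theta = f(y)$ along $\{y=y(\theta)\}$ by substituting $M_1$, $M_2$ from \eqref{eq:C on the boundary} into $C(\theta)\phi'(y)+C'(\theta)\phi(y)$. Combined with the identity $V^\scS_y + V^\scS_\theta = f(y)$ already shown on $\scS$, both one-sided directional derivative sums agree on the boundary. The two chain rules for $\Vboundary(\theta) = V^\scW(y(\theta),\theta) = V^\scS(y(\theta),\theta)$ then force $(V^\scW_y - V^\scS_y)(y'(\theta)-1) = 0$, and since $y'(\theta)<0$ by \cref{lemma: boundary}, both $V_y$ and $V_\theta$ match. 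Smoothness across the interface $\{y=y_0+\theta\}$ between $\scS_1$ and $\scS_2$ follows from $\Vboundary(0)=0$ (since $\delta+h(y_0)\lambda(y_0)=0$) together with $\Vboundary'(0)=f(y_0)$, obtained by evaluating \eqref{eq:C on the boundary} at $y=y_0$.

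The boundary condition $V(y,0)=0$ is then immediate from $\Vboundary(0)=0$. The strict inequalities \eqref{eq:var ineq II} and \eqref{eq:var ineq IV}, which I expect to be the main technical obstacle, I would handle by setting $\psi_\scW := V_y+V_\theta - f$ on $\scW$ and $\psi_\scS := -\delta V - hV_y$ on $\scS$, both of which vanish on the common boundary by the verified PDEs. In $\scW$, differentiating $\psi_\scW$ in $y$ and using $V_y = -\delta V/h$ to eliminate higher derivatives of $V$ reduces the question to signing a combination of $f$, $h$, $h'$, $h''$, $\lambda$ for which the assumptions $h'>0$, $h''\ge0$ and $(h\lambda)'>0$ of \cref{cond:model parameters} suffice to give $\partial_y\psi_\scW<0$, hence $\psi_\scW>0$ for $y<y(\theta)$. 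In $\scS_2$ a direct computation gives $\partial_\theta\psi_{\scS_2} = -f(y-\theta)\bigl(\delta + h(y)\lambda(y-\theta)\bigr)$, which is negative by the monotonicity of $h\lambda$ and the definition $\delta+h(y_0)\lambda(y_0)=0$; in $\scS_1$ the analogous analysis along lines of constant $\theta_\ast$ (direction $(1,1)$) reduces once more to $(h\lambda)'>0$. Making these sign analyses rigorous and uniform is precisely the work that would be deferred to \cref{appendix:proofs about value functions}.
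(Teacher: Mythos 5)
Your overall architecture is the paper's: verify the PDEs by construction, paste $C^{1}$ across the free boundary and across $\{y=y_0+\theta\}$, then prove the strict inequalities region by region. The identity $\partial_y\Delta+\partial_\theta\Delta=1$ with the telescoping proof of \eqref{eq:var ineq III} and your $\scS_2$ computation coincide with the paper's arguments (\cref{lemma: simple sell region inequality}); your ``two chain rules'' derivation of $V^{\scW}_y=V^{\scS_1}_y$ on the free boundary is correct and a slightly different route (the paper instead uses the explicit formula \eqref{eq: y derivative of V in difficult sell region} together with $\delta V+hV_y=0$ on the boundary); and your $\scS_1$ plan along the direction $(1,1)$ is exactly \cref{lemma: difficult sell region inequality}, though it needs more than $(h\lambda)'>0$ (also $h''\ge0$, the sign of $h\lambda+\delta$ to the left of $y_0$, monotonicity of $f$, and a separate case once the shifted point leaves $(y_\infty,y_0]$). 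One part of the assertion you do not address at all is $C^{1}$-regularity on $\RR\times\{0\}$, in particular at the corner $(y_0,0)$: the paper proves existence and continuity of $V_\theta(y,0)=\lim_{\theta\searrow0}V(y,\theta)/\theta$ by l'H\^{o}pital, separately for $y<y_0$, $y=y_0$, $y>y_0$; this is not automatic, since $\Vboundary$, $y(\cdot)$ and $\Delta$ all degenerate at $\theta=0$.

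The genuine gap is the wait-region inequality \eqref{eq:var ineq IV}. Your plan is to show $\partial_y\psi_{\scW}<0$ throughout $\scW$, with $\psi_{\scW}:=V^{\scW}_y+V^{\scW}_\theta-f$, and integrate from the boundary; but that premise is false. Take the benchmark of \cref{fig:wait-and-sell-region}: $h(y)=y$, $\lambda\equiv1$ (so $f(y)=e^{y}$), $\delta=\tfrac12$, hence $y_0=-\tfrac12$, $y_\infty=-\tfrac32$, and fix the $\theta>0$ with $y(\theta)=-1$. Then $\Vboundary(\theta)=e^{-1}$, and \eqref{eq:C on the boundary} gives $V^{\scW}_y+V^{\scW}_\theta=\tfrac12 e^{-1}\paren[\big]{(-y)^{-3/2}+(-y)^{-1/2}}$ for $y\le-1$, so that
\[
	\partial_y\psi_{\scW}(y)=\tfrac{3}{4}e^{-1}(-y)^{-5/2}+\tfrac{1}{4}e^{-1}(-y)^{-3/2}-e^{y}.
\]
This vanishes at $y=-1$ (necessarily: the boundary was defined by the $C^{2,1}$-pasting \eqref{eq:C^2 smooth pasting boundary}, which is precisely $\psi_{\scW}=\partial_y\psi_{\scW}=0$ there), it is negative at $y=-1.2$ (about $-0.06$), but it is positive at $y=-5$ (about $+0.006$) and for all more negative $y$, because the polynomially decaying terms dominate $e^{y}$. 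So $\partial_y\psi_{\scW}$ changes sign inside $\scW$ and the monotone-integration argument collapses; indeed, since $\psi_{\scW}$ vanishes on the boundary, is (as claimed) positive inside, and tends to $0$ as $y\to-\infty$ in this example, its $y$-derivative cannot be single-signed. Note also that $\partial_y\psi_{\scW}$ involves $C(\theta)$, $C'(\theta)$ evaluated at $y(\theta)$, so it is not a pointwise sign condition in $f,h,h',h'',\lambda$ at $y$ alone. Some global comparison is needed; the paper's \cref{lemma: wait region inequality} writes $V^{\scW}_y+V^{\scW}_\theta$ in closed form via \eqref{eq:C on the boundary} and compares it with $f(y(\theta))\exp\paren[\big]{\int_{y(\theta)}^{y}\tfrac{-\delta}{h(x)}\diff x}\ge f(y)$, exploiting $0<-\delta/h\le\lambda$ to the left of $y_0$ — an estimate of a different nature from monotonicity in $y$.
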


\begin{proof}[Proof of \cref{thm: optimal strategy}]
	On admissibility of $\assetsProcess^\text{opt}$: Predictability of $\assetsProcess^\text{opt}$ is obvious by continuity of $y(\theta)$.
	In fact, $\assetsProcess^\text{opt}$ is deterministic because $Y_t$ is so.
	As noted in the proof of \cref{lemma: difficult sell region inequality}, the function ${y \mapsto \paren[\big]{f \cdot (h\lambda + h' + \delta) / h'}(y)}$ is increasing in ${(y_\infty, y_0]}$, so $\tau(y)$ and its inverse $\ybar(\tau)$ are decreasing, as is $\theta(y)$ by \cref{lemma: boundary}.
	This implies monotonic decrease of $\assetsProcess^\text{opt}$.
	Right continuity follows from the description of the 5 steps stated in the theorem.
	So $\assetsProcess^\text{opt} \in \admissibleSellStrategies{\assetsProcess_{0-}}$.
	
	On finite time to liquidation: By $h\lambda + h' + \delta > 0$ in $(y_\infty, y_0]$ and \cref{eq: exp TTL of y}, it takes $\tau(Y_s) < \infty$ time to liquidation if $Y_s = y(\assetsProcess_s)$, i.e.~along the boundary.
	This time only increases by some waiting time $s > 0$ in case $Y_{0-} < y(\assetsProcess_{0-})$ (step~\labelcref{strategy-step:wait}).
	But since $h(y_0) < 0$, we have $s < \infty$.
	
	On optimality: Note that $(Y_t, \assetsProcess_t) \in \brackets[\big]{\min \{y-\assetsProcess_{0-}, 0\}, \max\{0,y\}} \times \brackets[\big]{0, \assetsProcess_{0-}}$ for $y = Y_{0-}$ because $h(0)=0$ and $h'>0$.
	So $V(Y_t, \assetsProcess_t)$ is bounded by continuity of $V$ (\cref{lemma: V continuously differentiable} above).
	So the local martingale part of $G$ in \cref{eq: Ito for G} is a true martingale on every compact time interval because $M$ is a square-integrable martingale by assumption.
	By construction of $V$ and \cref{lemma: V continuously differentiable,lemma: wait region inequality,lemma: simple sell region inequality,lemma: difficult sell region inequality}, $G$ is a supermartingale with $G_0 - G_{0-} = \baseS_0 \int_0^{\Delta \assetsProcess_0} \paren[\big]{ V_y + V_\theta - f }(Y_{0-} + x, \assetsProcess_{0-} + x) \diff x \le 0$ for every strategy and a true martingale with $G_{0-} = G_0$ for $\assetsProcess^\text{opt}$.
	So \cref{prop: supermatringale suffices} applies.
\end{proof}

\FloatBarrier

\begin{figure}[!ht]%
	\centering
	\begin{subfigure}{0.45\textwidth}
		\centering
		\includegraphics[width=\textwidth]{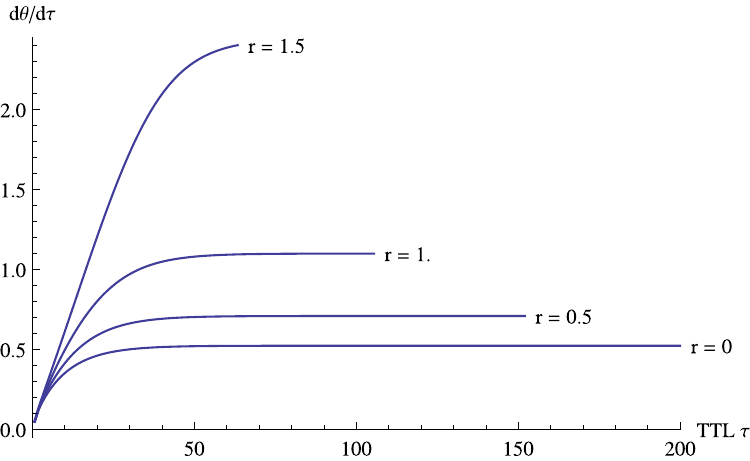}
		\caption{Fixed $\beta=1$, $\delta=0.1$, varying $r$.}
		\label{fig:liquidation-rate-different-r}
	\end{subfigure}
	~
	\begin{subfigure}{0.45\textwidth}
		\centering
		\includegraphics[width=\textwidth]{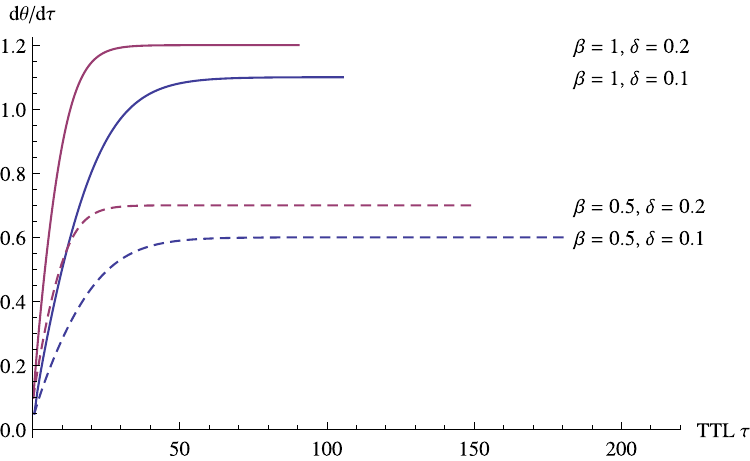}
		\caption{Fixed $r=1$, varying $\beta$ and $\delta$.}
		\label{fig:liquidation-rate-different-beta-and-delta}
	\end{subfigure}
	\caption{Liquidation rate (after initial block trade) in Ex.\ \ref{ex:simple-lob}. Lines end at $\protect\thetabar(\tau) = 100$.}
	\label{fig:liquidation-rate}
\end{figure}
\begin{figure}[!ht]%
	\centering
	\begin{subfigure}{0.45\textwidth}
		\centering
		\includegraphics[width=\textwidth]{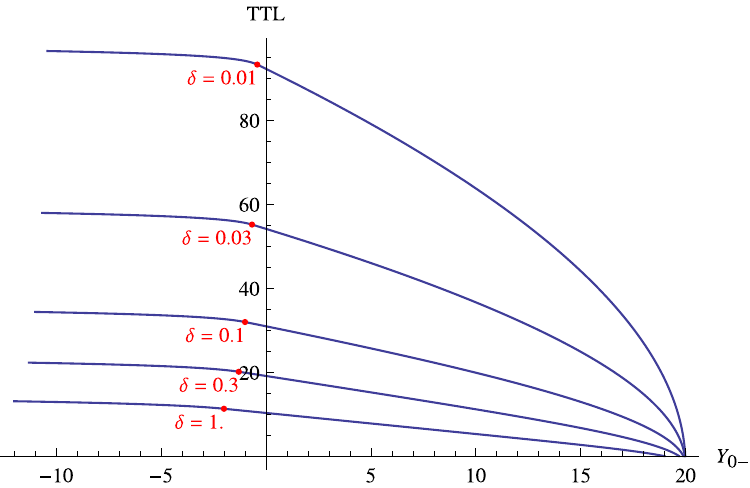}
		\caption{Fixed $\beta=1$, $r=1$, varying $\delta$.}
		\label{fig:ttl-over-y0-different-delta}
	\end{subfigure}
	~
	\begin{subfigure}{0.45\textwidth}
		\centering
		\includegraphics[width=\textwidth]{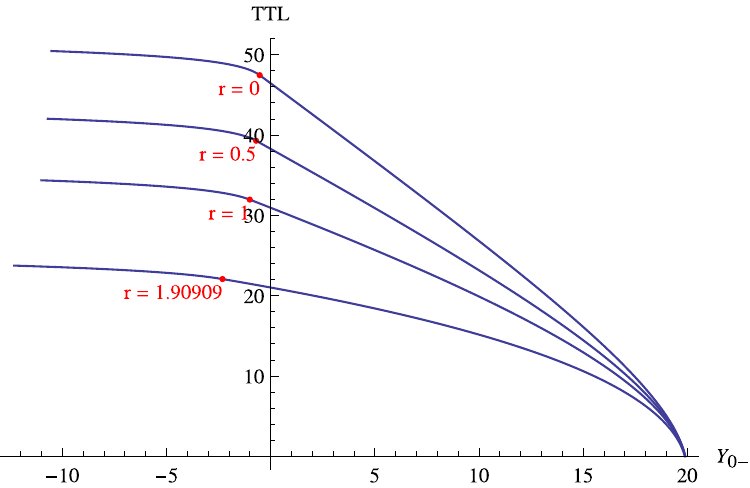}
		\caption{Fixed $\beta=1$, $\delta=0.1$, varying $r$.}
		\label{fig:ttl-over-y0-different-r}
	\end{subfigure}
	\caption{Dependence of TTL on $Y_{0-}$ for $\assetsProcess_{0-}=20$ in \cref{ex:simple-lob}. %
	A red point marks $(y(\assetsProcess_0), \tau(\assetsProcess_0))$, where continuous trading begins.}
	\label{fig:ttl-over-y0}
\end{figure}

\begin{example}\label{ex:simple-lob}
	Recall \cref{{ex:order book densities}} and let $h(y) = \beta y$ for $\beta > 0$.
	Then $(h\lambda)'>0$ and
	\[
		y_0 = \frac{-c \delta}{\beta + (1-r)\delta} \quad\text{and}\quad y_\infty = \frac{-c(\beta+\delta)}{\beta + (1-r)(\beta + \delta)}.
	\]
	As can be seen from the proofs, $\lambda$ and $h$ are only needed at possible values of $Y_t$. 
	Hence \cref{cond:model parameters} effectively restricts the state space $\overbar \scW \cup \overbar \scS$ to $c + (1-r)y > 0$. %
	We only have to check this for $Y_{0-}$, $y_0$ and $y_\infty$. %
	Note that the special case $Y_{0-} = 0$ already does so.
	Now $y_\infty$ and $y_0$ lie in the required range with $y_\infty < y_0 < 0$ if $r \in \bigl[0, 1 + \beta/(\beta + \delta)\bigr)$.
	For $r\ne 1$ and $y \in (y_\infty, y_0]$ we get%
	\[
		\hspace{-1em}
		\theta(y) = \frac{\beta y + \delta A(y)}{\delta (1-r)}
			- \frac{\beta c B}{\delta C (1-r)^2} \log\paren[\Big]{ \frac{A(y) B}{\beta c} }
			+ \frac{\beta c (\beta + \delta)}{\delta C} \log\paren[\Big]{ \frac{\beta A(y)}{\beta y + (\beta + \delta)A(y)} }\,,
	\]
	with $A(y):= c+ (1-r)y$, $B:= \beta + (1-r)\delta$ and $C:= \beta + (1-r)(\beta + \delta)$,
	whereas
	\[
		\theta(y) = \frac{(\beta y + \delta c)(\beta y + (2\beta+\delta)c)}{2\beta\delta c} - c \frac{\beta + \delta}{\delta} \log\paren[\Big]{\frac{\beta y + (\beta+\delta)c}{\beta c}}
		\quad\text{if $r=1$.}
	\]
	Time to liquidation (TTL) \emph{along the boundary} is a function of $\delta$, $\beta$, $r$ and $y/c$, namely
	\begin{align*}
		\tau(y) &= \begin{cases}
			\displaystyle
			-\frac{1}{\delta}\log\paren[\bigg]{ \paren[\Big]{\frac{1 + (1-r) y/c }{1 + (1-r) \delta/\beta}}^{1/(1-r)} \paren[\Big]{ \frac{ y/c }{1 + (1-r) y/c} + \frac{\beta + \delta}{\beta} } }
			&\text{if $r\ne 1$,}
		\\[1em]
			\displaystyle
			-\frac{y}{\delta c} - \frac{1}{\beta} - \frac{1}{\delta} \log\paren[\Big]{ \frac{y}{c} + 1 + \frac{\delta}{\beta} }
			&\text{if $r=1$.}
		\end{cases}
	\end{align*}
	For $r=1$ and $\delta \to \infty$, we have $y_0 \to -\infty$, so the overall TTL tends to and ultimately equals $0$, a single block sale, $\assetsProcess_0=0$, being optimal for sufficiently large $\delta$.
	Using the product logarithm $W:=(x \mapsto x e^x)^{-1}$, impact and asset position for $r=1$, $\tau\ge 0$ are
	\begin{align*}
		\ybar(\tau) &= c W\paren[\big]{ e^{1-\delta\tau} } - c\frac{\beta + \delta}{\beta}
			\qquad\text{and}
	\\
		\thetabar(\tau) &= \frac{\beta c}{2\delta} \paren[\Big]{ W\paren[\big]{ e^{1-\delta\tau} }^2 - 1 } - c\frac{\beta + \delta}{\delta} \log W\paren[\big]{ e^{1-\delta \tau} }
	\end{align*}
	along the free boundary.
	The rate $\!\diff \assetsProcess_t/\diff t = -\thetabar'(T-t)$ becomes asymptotically constant for TTL $\tau = T-t \to \infty$ and decreases for $\tau \to 0$, cf.\ \cref{fig:liquidation-rate}. 
	The respective limits are
	\begin{align*}
		\lim_{\tau\to \infty} \thetabar'(\tau) = -h(y_\infty) = \frac{\beta c (\beta + \delta)}{\beta + (1-r)(\beta + \delta)}\quad& \text{ and  }\quad 
 \thetabar'(0) = \frac{\delta \beta c}{2\beta + (1-r)\delta}\,.
	\end{align*}
\end{example}

\begin{remark}\label{problem:optimalaquisition}
	How to optimally acquire an asset position, minimizing the expected costs, is the natural counterpart to the previous liquidation problem; cf.\ \cite{PredoiuShaikhetShreve11}.
	To this end, if we represent the admissible strategies  by increasing c\`{a}dl\`{a}g processes $\assetsProcess$ starting at 0 (describing the cumulative number of shares purchased over time), then the discounted costs (negative proceeds) of an admissible (purchase) strategy $\assetsProcess$ takes the form 
	\begin{equation}
		\int_0^{\infty}e^{ \eta t}f(Y_{t-})M_t \diff \assetsProcess^c_t + \sum_{\substack{t \geq 0 \\ \Delta \assetsProcess_t \neq 0}} e^{ \eta t} M_t \int_0^{\Delta \assetsProcess_t} f(Y_{t-} + x) \diff x,
	\end{equation}
	with discounted unaffected price process $e^{-\gamma t}\baseS_t = e^{\eta t}M_t$ for $\eta := \mu - \gamma = -\delta$.
	To have a well-posed minimization problem for infinite horizon, one needs to assume 
that the price  process increases in expectation, i.e.\  $\eta > 0$, and thus the trader aims to buy an asset with rising (in expectation) price.
	
	In this case, the value function %
	of the optimization problem
	will be described by the variational inequality
	\(
		\min \{f + V_y - V_\theta\ ,\ \eta V - h V_y\} = 0.
	\)
	An approach as taken previously for the optimal liquidation problem  permits again to construct the classical solution to this free boundary problem explicitly.
	 Thereby, the state space is divided into a wait region and a buy region by the free boundary, that is described by 
	\begin{equation}\label{eq:boundary purchase}
		\theta'(y) = -1 + \frac{h(y)\lambda(y)}{\eta} - \frac{h(y)h''(y)}{\eta h'(y)} + \frac{h(y)(h\lambda + h' - \eta)'(y)}{\eta(h\lambda + h' - \eta)(y)},\qquad y\geq y_0,
	\end{equation}
	with initial condition $\theta(y_0) = 0$, where $y_0$ is the unique root of $h(y)\lambda(y) = \eta$ (similar to \eqref{eq:theta' common} from the optimal liquidation problem).
	Verification of %
	optimality
	will go though under the assumptions $\eta > 0$, $f \in C^2$ with $f(0) = 1$, $\lambda(y):= f'(y)/f(y) > 0$, resilience $h \in C^2$ with $h(0) = 0$, $h' > 0$, the technical condition $(h')^2 > h h''$ and such that $(h\lambda)' > 0$ and $(h\lambda + h')' > 0$.
	Note that apart from $\eta > 0$ and $(h')^2 > h h''$, these match \cref{cond:model parameters}.
	Examples satisfying $(h')^2 > h h''$ are $h(y) = \beta y$ and $h(y) = \alpha \arctan(\beta y)$ for $\alpha,\beta > 0$.
	
	It may be interesting to note that the boundary defined by \eqref{eq:boundary purchase} does not have a vertical asymptote, because such an asymptote could only occur at a root $y_\infty$ of the denominator $h\lambda + h' - \eta$, but $y_\infty < y_0$ and $\theta'(y_0) = \bp{h(h\lambda)'/(\eta h')}(y_0) > 0$.
	The technical condition $(h')^2 > h h''$  guarantees that the boundary is strictly increasing for all $y \ge y_0$.
\end{remark}

\section{Optimal~liquidation with non-monotone strategies}
\label{sect:intermediate buy orders}

In this section, we solve under \cref{cond:model parameters} the optimal liquidation problem when the admissible liquidation strategies allow for intermediate buying. 
To focus again on transient price impact and explicit analytical results, we keep other model aspects simple and consider the problem in a two-sided order book model with zero bid-ask spread. This is an idealization of the predominant one-tick-spread 
that is observed for common relatively liquid risky assets \cite{ContLarrard13}. See \cref{rem:nonzerospread} though. 
We show that the optimal trading strategy is monotone when $Y_{0-}$ is not too small (see \cref{rem:round-trips-in-depressed-market}).
More precisely, the two-dimensional state space decomposes into a buy region and a sell region with a non-constant interface, that coincides with the free boundary constructed in \cref{sec:Free boundary problem}.

In previous sections, we considered pure selling strategies and  specified the model for such, i.e.\ in the sense of \cref{sect:LOB} we specified only the bid side of the LOB.
Now, we extend the model to allow for buying as well.
In this case, a large investor's trading strategy may be described by a pair of increasing c\`{a}dl\`{a}g processes $(A^+, A^-)$ with $A^{\pm}_{0-} = 0$, where $A_t^+$ (resp.\ $A_t^-$)  describes the cumulative number of assets sold (resp.\ bought)  up to time $t$.
Her risky asset position is $\assetsProcess_t = \assetsProcess_{0-} - (A^+_t - A^-_t)$ at time $t \ge 0$.
We assume that the price impact process $Y = Y^\assetsProcess$ is given by \eqref{eq:deterministic Y_t dynamics} with $\assetsProcess = \assetsProcess_{0-} - (A^+ - A^-)$, and that the best bid and ask prices evolve according to the same process $S = f(Y^\assetsProcess)\baseS$, i.e.~the bid-ask spread is taken as zero.
The proceeds from executing a market buy order at time $t$ of size $\Delta A^{-}_t > 0$ are given again by \eqref{eq:block sale proceeds} with $\Delta \assetsProcess_t = \Delta A^-_t$.
Proceeds being negative means that the trader pays for acquired assets.
Thus, the $\gamma$-discounted (cumulative) proceeds from
trading strategy $(A^+, A^-)$ over time period $[0,T]$ are
\begin{equation}\label{eq:proceeds fv strategy}
	L_T = -\int_{0}^T e^{-\gamma t} f(Y_{t}) \baseS_t \diff \assetsProcess_t^c - \sum_{\substack{\Delta \assetsProcess_t \neq 0 \\ t\leq T}} e^{-\gamma t}\baseS_t \int_0^{\Delta \assetsProcess_t} f(Y_{t-} + x) \diff x\,.
\end{equation}
For finite variation strategies $\assetsProcess$ 
the sum in \eqref{eq:proceeds fv strategy} converges absolutely, cf.\ \cref{rmk:convergence of block proceeds}.

We consider the optimization problem over the set of admissible trading strategies
\begin{align}
	\hspace{-1.5em}
	\admissibleFiniteVariationStrategies{\assetsProcess_{0-}} := \bigl\{ \squeeze[1]{\assetsProcess = \assetsProcess_{0-} - (A^+ - A^-)} \mid {}&\text{$A^{\pm}$ is increasing, càdlàg, predictable, \rlap{bounded,}} 
	\nonumber
\\ \label{eq:admfvstrat}
			&\text{ with $A^{\pm}_{0-}= 0$ and $\assetsProcess_t \ge 0$ for $t \ge 0$} \bigr\},
\end{align} 
where $A =  A^+ - A^-$ denotes the minimal decomposition for a process $A$ of finite (here even bounded) variation; the last condition means that short-selling is not allowed.%

For an admissible strategy $\assetsProcess \in \admissibleFiniteVariationStrategies{\assetsProcess_{0-}}$, $L_T(y;\assetsProcess)$ as defined in \eqref{def:liquidation proceeds}, but extended to general bounded variation strategies by \eqref{eq:proceeds fv strategy}, describes the proceeds from implementing $\assetsProcess$ on the time interval $[0, T]$. 
These proceeds are a.s.~finite for every $T\geq 0$, see \cref{rmk:convergence of block proceeds}.
We now show that  $\lim_{T\rightarrow\infty} L_T(y;\assetsProcess)$ exists in $L^1$. Let $L(y;\assetsProcess) = L^+(y;\assetsProcess) - L^-(y,\assetsProcess)$ be the minimal decomposition of the process $L(y;\assetsProcess)$ (having finite variation), and let $L(y;\assetsProcess^\pm)$ be the proceeds process from a monotone strategy $\assetsProcess^{\pm} = \assetsProcess_{0-} \mp A^{\pm}$. 
We have $L^\pm_T(y;\assetsProcess) \leq L_T(y;\assetsProcess^\pm)$ for every $T\geq 0$ because $Y^{\assetsProcess^-} \geq Y^\assetsProcess \geq Y^{\assetsProcess^+}$.  
Moreover, since $A^+ + A^{-} \leq C$ for some constant $C$ we conclude from the solution of the optimization problem with monotone strategies (\cref{thm: optimal strategy}) that $L^\pm_T(y;\assetsProcess) \leq L_\infty(y;\assetsProcess^\pm) \in L^1$ for every $T\geq 0.$ 
By dominated convergence one gets that $L^{\pm}_T(y;\assetsProcess) \to L^{\pm}_\infty(y;\assetsProcess)$ in $L^1$ and a.s.\ for $T \to \infty$.
In particular, $\lim_{T \to \infty} L_T(y;\assetsProcess) = L^+_\infty(y;\assetsProcess) - L^-_\infty(y;\assetsProcess) =: L_{\infty}(y;\assetsProcess)$ exists in $L^1$.
So, the gain functional $J(y;\assetsProcess)$ for the optimal liquidation problem with possible intermediate buying,
\begin{equation}\label{task:optimal liquidation with intermediate buys}
	\max_{\assetsProcess \in \admissibleFiniteVariationStrategies{\assetsProcess_{0-}}} J(y;\assetsProcess)\qquad \text{for}\qquad J(y;\assetsProcess) := \EE[L_\infty(y;\assetsProcess)]\,,
\end{equation}
is well defined. 
By arguments as in \cref{sec:Problem} (cf.~\cref{prop: supermatringale suffices} and \eqref{eq: Ito for G}) one sees that in this case it suffices to find a classical solution to the following problem
\begin{align}
	V_y + V_\theta &= f \qquad\text{on }\RR\times[0,\infty)\label{eq:ineq when interm buying I},
\\	-\delta V - h(y)V_y &\leq 0 \qquad\text{on }\RR\times[0,\infty) \label{eq:ineq when interm buying II},
\end{align}
with suitable boundary conditions, ensuring that a classical solution exists and that the (super-)martingale properties from \cref{prop: supermatringale suffices} extend to $[0-, T]$, cf.\ \cref{rem:supermartprop}.
The optimal liquidation strategy then can be described by a sell region and a buy region, divided by a boundary.

The sell region turns out to be the same as for the problem without intermediate buying in \cref{sec:Problem}, i.e.~the region $\scS$, while the wait region $\scW$ there becomes a buy region
\(
	\buyReg := \RR\times [0,\infty) \setminus \widebar{\scS}
\) 
here.
Similarly to \cref{subsection: Construction of V and the optimal strategy}, we extend the definition of $\Delta(y,\theta)$ to $\buyReg$.
For $(y,\theta)\in \RR\times [0,\infty)$, let $\Delta(y,\theta)$ be the signed $\norm{\cdot}_1$ distance in direction $(-1,-1)$ of the point $(y,\theta)$ to the boundary $\partial \scS = \{(y(\theta), \theta) \mid \theta\geq 0\} \cup \{(y,0) \mid y\geq y_0\}$, i.e.~$(y-\Delta, \theta-\Delta) \in \partial S$.
Recall the definition of $V^{\scS}$ in \eqref{eq: V in sell region} and let
\[
	V^{\buyReg}(y,\theta) := \Vboundary(\theta - \Delta(y,\theta))- \int_{y}^{y-\Delta(y,\theta)} f(x)\diff x,\quad \text{for }(y,\theta)\in \buyReg.
\]
The discussion so far suggests that the following function would be a classical solution to the problem \eqref{eq:ineq when interm buying I} -- \eqref{eq:ineq when interm buying II} describing the value function of the optimization problem \eqref{task:optimal liquidation with intermediate buys}:
\begin{equation}\label{eq:value fn interm buy}
	\Vbuysell(y, \theta) := 
		\begin{cases}
			V^\scS(y,\theta), & \hbox{if }(y,\theta)\in \widebar{\scS},
		\\	V^{\buyReg}(y,\theta),& \hbox{if }(y,\theta)\in \buyReg,
		\end{cases}
\end{equation}
up to the multiplicative constant $\baseS_0$.
Note that both cases in \eqref{eq:value fn interm buy} can be combined to 
\[
	\Vbuysell(y,\theta) = \Vboundary(\theta - \Delta(y,\theta))+ \int_{y-\Delta(y,\theta)}^{y} f(x)\diff x,\quad \text{for all }(y,\theta).
\]
The next theorem proves the conjectures already stated in this section for solving the optimal liquidation problem with possible intermediate buying.

\begin{theorem}\label{thm:optimal liquidation with intermediate buy orders}
	Let the model parameters $h$, $\lambda$, $\delta$ satisfy \cref{cond:model parameters}.
	The function $\Vbuysell$ is in $C^{1}(\RR\times [0,\infty))$ and solves \eqref{eq:ineq when interm buying I} and \eqref{eq:ineq when interm buying II}.
	The value function of the optimization problem \eqref{task:optimal liquidation with intermediate buys} is given by $\baseS_0\cdot  \Vbuysell$.
	Moreover, for given number of shares $\assetsProcess_{0-}\geq 0$ to liquidate and initial state of the market impact process $Y_{0-} = y$, the unique optimal strategy $\assetsProcess^{\text{opt}}$ is given by $\assetsProcess^{\text{opt}}_{0-} = 0$ and:
	
	\begin{enumerate}
		\item 
			If $(y, \assetsProcess_{0-})\in \widebar{\scS}$, $\assetsProcess^{\text{opt}}$ is the liquidation strategy for $\assetsProcess_{0-}$ shares and impact process starting at $y$ as described in \cref{thm: optimal strategy}.
			
		\item 
			If $(y,\assetsProcess_{0-})\in \buyReg$, $\assetsProcess^{\text{opt}}$ consists of an initial buy order of $\abs{\Delta(y, \assetsProcess_{0-})}$ shares (so that the state process $(Y, \assetsProcess)$ jumps at time $0$  to the boundary between $\buyReg$ and $\scS$) and then trading according to the liquidation strategy for $\assetsProcess_{0-} + \abs{\Delta(y, \assetsProcess_{0-})}$ shares and impact process starting at $y + \abs{\Delta(y, \assetsProcess_{0-})}$ as described in \cref{thm: optimal strategy}.
	\end{enumerate}
\end{theorem}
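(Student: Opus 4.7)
The overall strategy is to mirror the verification argument of \cref{thm: optimal strategy}: produce a classical $C^1$ candidate $\Vbuysell$ for the relevant variational system, then apply an Itô-based supermartingale optimality principle (the obvious analogue of \cref{prop: supermatringale suffices}, extended to bounded-variation strategies in $\admissibleFiniteVariationStrategies{\assetsProcess_{0-}}$). The system to verify consists of the equality $V_y + V_\theta = f$ on all of $\RR \times [0,\infty)$ (which is what forces both buying and selling to be costless at the margin in a two-sided book) together with the waiting inequality $-\delta V - h(y)V_y \le 0$. Because the sell region $\scS$ and the boundary $\partial\scS$ are inherited verbatim from \cref{sec:Free boundary problem}, all properties on $\widebar{\scS}$ already follow from \cref{lemma: V continuously differentiable} and the auxiliary lemmas of the appendix; the real work is on $\buyReg$ and across the interface.

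\textbf{Step 1 (regularity).} I would first observe that the unified representation $\Vbuysell(y,\theta) = \Vboundary(\theta - \Delta(y,\theta)) + \int_{y-\Delta(y,\theta)}^{y} f(x)\diff x$ holds on both sides of $\partial\scS$ and that $\Delta$ vanishes on $\partial\scS$, so the candidate is automatically continuous across the interface. Since $\partial\scS$ is a $C^1$-curve by \cref{lemma: boundary} and is transverse to the direction $(-1,-1)$ (because $\theta'(y) < 0$ there), the implicit equation $(y-\Delta, \theta-\Delta) \in \partial\scS$ defines $\Delta$ as a $C^1$-function of $(y,\theta)$. Together with \cref{lemma: V continuously differentiable}, which already yields the $C^1$-matching of $V^\scS$ to its extension across $\partial\scS$, this gives $\Vbuysell \in C^1(\RR\times[0,\infty))$.

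\textbf{Step 2 (variational equalities).} Translation invariance of $\partial \scS$ along $(1,1)$ gives the identity $\Delta_y + \Delta_\theta = 1$ in $\buyReg$. A short direct differentiation of the unified formula then cancels the $\Vboundary'$-terms and leaves $V_y + V_\theta = f(y)$ in $\buyReg$; in $\scS$ the same identity is part of \cref{lemma: V continuously differentiable}. For the second requirement, the inequality $-\delta \Vbuysell - h\Vbuysell_y \le 0$ on $\overline{\scS}$ is identical to the analogous inequality already proven in the monotone case (see the lemmas from \cref{appendix:proofs about value functions} referenced in the proof of \cref{thm: optimal strategy}). In $\buyReg$ one verifies the inequality by computing $-\delta \Vbuysell - h\Vbuysell_y$ as a function of $\Delta$ along the characteristic line $\{(y-\Delta+s,\theta-\Delta+s):s\ge 0\}$ emanating from a point of $\partial\scS$, noting that the value $0$ at $s=\Delta$ (i.e.~on the boundary) is inherited from the monotone analysis and that the derivative in $s$ has the correct sign by \cref{cond:model parameters}, in particular $(h\lambda)' > 0$ and $h' > 0$. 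This monotonicity check along the characteristic $(-1,-1)$ is the principal technical step and the one I expect to require the most care.

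\textbf{Step 3 (optimality and identification of $A^{\mathrm{opt}}$).} With $\Vbuysell$ in hand I apply Itô's formula to $G_t = L_t(y;A) + e^{-\gamma t}\baseS_t \Vbuysell(Y_t,\assetsProcess_t)$ as in \eqref{eq: Ito for G}, noting that the jump formula must now be stated symmetrically to handle buy blocks; since $V_y + V_\theta = f$ everywhere, both continuous trading and block trades (of either sign, including the initial jump across $\buyReg$) contribute exactly zero to the finite-variation bracket, and the same identity ensures $G_0 = G_{0-}$ unconditionally, which removes the obstacle that \cref{rem:supermartprop} flags for monotone controls. The drift part is $\le 0$ by Step 2, giving supermartingality and hence $\baseS_0\Vbuysell(y,\theta) \ge J(y;A)$ for every $A\in\admissibleFiniteVariationStrategies{\assetsProcess_{0-}}$. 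The strategy described in items (i)--(ii) keeps $(Y,\assetsProcess)$ in $\overline{\scS}$ for all $t\ge 0$, so the drift vanishes and $G$ is a martingale, yielding equality and optimality. Uniqueness follows because the drift is strictly negative off $\overline{\scS}$, so any competing optimizer must also stay in $\overline{\scS}$ and then coincide with $A^{\mathrm{opt}}$ by the uniqueness statement in \cref{thm: optimal strategy}.
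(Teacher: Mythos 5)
Your overall architecture coincides with the paper's proof: $C^1$-regularity from \cref{lemma: V continuously differentiable} together with the unified formula for $\Vbuysell$, the identity $V_y+V_\theta=f$ by construction, the drift inequality in $\buyReg$ via a monotonicity argument along the $(1,1)$-direction starting from the value zero on the interface, and verification plus uniqueness via the supermartingale optimality principle. Two points, however, need completion or repair. First, the step you yourself single out as the crux is only asserted, and the ingredients you cite are not sufficient: the sign of the derivative along the characteristic does not follow from $(h\lambda)'>0$ and $h'>0$ alone. In the paper, the relevant function $g(\widetilde{\Delta})$ has derivative $g'(\widetilde{\Delta}) = f(y)\,(h\lambda+h'+\delta)(y) - h'(y)\,\Vbuysell_\theta(y_b,\theta_b)$, and showing $g'\le 0$ uses the explicit boundary expression \eqref{eq:V_theta on the boundary} (coming from \eqref{eq:C on the boundary}), the bound $f(y)\le f(y_b)$, and the fact that $x\mapsto (h(x)\lambda(x)+\delta)/h'(x)$ is increasing on $(-\infty,y_0]$, which in turn relies on $h''\ge 0$ and on $h\lambda+\delta\le 0$ there; this is exactly inequality \eqref{eq:ineq verif}. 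The claim is true and your plan goes through, but this computation is the theorem's actual content in $\buyReg$ and cannot be waved through with the two conditions you name.

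Second, in Step 3 the justification \enquote{the strategy keeps $(Y,\assetsProcess)$ in $\widebar{\scS}$, so the drift vanishes} is incorrect as stated: by \cref{lemma: simple sell region inequality,lemma: difficult sell region inequality} the drift term $-\delta V - hV_y$ is \emph{strictly} negative in the interior of $\scS$ for $\theta>0$; it vanishes only on the interface $\partial\scS$ between $\scS$ and $\buyReg$ and on $\{\theta=0\}$. What saves the argument is that after the initial block the candidate trajectory actually lives on $\partial\scS\cup(\RR\times\{0\})$, and the martingale property should be argued from that. The same strictness inside $\scS$ is also what your uniqueness argument is missing: strict negativity of the drift off $\widebar{\scS}$ alone does not force a competing optimizer to coincide with $A^{\text{opt}}$, since a non-monotone competitor could spend time in the interior of $\scS$; one needs, as the paper observes, that equality in \eqref{eq:ineq when interm buying II} holds \emph{only} on the interface, so that any strategy attaining the value is pinned to the boundary and hence coincides with $A^{\text{opt}}$.
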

\noindent
The proof of \cref{thm:optimal liquidation with intermediate buy orders} is given in \cref{appendix:proofs about value functions}.
By continuity arguments, one could show that the optimal strategy of \cref{thm:optimal liquidation with intermediate buy orders} is even optimal in a set of bounded semimartingale strategies (to which the definition of proceeds can be extended continuously in certain topologies on the c\`{a}dl\`{a}g space, see \cite[Section~5.2]{BechererBilarevFrentrup-model-properties}).

\begin{remark}
\label{rmk:transient impact is essential}
As already noted in \cite[Prop.~3.5(III)]{GuoZervos13}, a multiplicative order book with permanent instead of transient impact, i.e.~$h \equiv 0$, leads to a trivial optimal control with complete initial liquidation at time $0$, in absence of transaction costs.
This can also be seen directly as follows. 
If $h \equiv 0$ we have $Y^\assetsProcess_t = Y_{0-} - \assetsProcess_{0-} + \assetsProcess_t$ and proceeds \eqref{def:liquidation proceeds} may be written as
\begin{align}
	\hspace{-2em}
	\MoveEqLeft
	\label{eq: permanent impact proceeds}
	L_T(\assetsProcess) 
		= \int_0^T F(Y^\assetsProcess_t) \diff \bp{ e^{-\delta t} M_t }_t 
			- \bp{ e^{-\delta T} M_T F(Y^\assetsProcess_T) - M_0 F(Y_{0-}) }
\\ \notag
		&= -\delta \!\int_0^T \!\! e^{-\delta t} M_t F(Y^\assetsProcess_t) \diff t
			+ \int_0^T  \!\! e^{-\delta t} F(Y^\assetsProcess_t) \diff M_t
			- \bp{ e^{-\delta T} M_T F(Y^\assetsProcess_T) - M_0 F(Y_{0-}) }
\end{align}
with the antiderivative $F(y):= \int_{-\infty}^y f(x) \diff x \ge 0$ of $f$, assuming $F(0) < \infty$.
So we get for any two strategies $\assetsProcess$ and $\hat\assetsProcess$ with $\assetsProcess_t \ge \hat\assetsProcess_t$ for all $t \ge 0$, that $\EE[L_T(\assetsProcess)] \le \EE[L_T(\hat\assetsProcess)]$.
Thus it is optimal to liquidate all assets at time $0$, because $\hat\assetsProcess_t := 0 \le \assetsProcess_t$ for all $t \ge 0$ and $\assetsProcess \in \admissibleFiniteVariationStrategies{\assetsProcess_{0-}}$.
\Cref{eq: permanent impact proceeds} moreover shows that in the case of no drift ($\delta=0$) and permanent impact, \emph{every} strategy that liquidates until time $T$ is optimal.
This was already observed in \cite[comment before Prop.~3.5]{GuoZervos13} and shows a remarkable difference of effects from permanent and transient impact; cf.\ also \cref{rmk: finite time horizon; no drift; convexity}.
\end{remark}

\begin{remark} \label{rem:round-trips-in-depressed-market}
	The results show that when the initial level of market impact is sufficiently small, i.e.\ $Y_{0-}< y_0$, so that the market price is sufficiently depressed and has a strong upwards trend by~\eqref{eq:deterministic Y_t dynamics}, then the optimal liquidation strategy may comprise an initial block buy, followed by continuous selling of the risky asset position.
	In this sense our model admits \emph{transaction-triggered price manipulation} in the spirit of \cite[Definition~1]{AlfonsiSchiedSlynko12} for sufficiently small $Y_{0-}< y_0$. Let us note that  \cite[p.742]{LorenzSchied13} emphasize the particular relevance of the martingale case (zero drift) when analyzing (non)existence of price manipulation strategies, and that
 it seems natural to buy an asset whose price tends to rise.
The case $Y_{0-}< 0$ could be considered as adding an exogenous but non-transaction triggered upward component to the drift.
In any case, buying could only occur at initial time $t=0$ and afterwards the optimal strategy is just selling.  
	Nonetheless, for typical choices of the unperturbed price process $\baseS$ (e.g.~exponential Brownian motion)
	one can show that our model does not offer arbitrage opportunities (in the usual sense) for the large trader, 
and so strategies, whose expected proceeds are strictly positive, have to admit negative proceeds (i.e.\ losses)  with positive probability, see \cite[Section~4]{BechererBilarevFrentrup-model-properties}.

	On the other hand, if the level of market impact is not overly depressed, i.e.~$Y_{0-} \geq y_0$, then an optimal liquidation strategy will never involve intermediate buying.
	This includes in particular the case of a neutral initial impact $Y_{0-} = 0$ (as in \cite{PredoiuShaikhetShreve11}), or of an only mildly depressed initial impact $Y_{0-} \in [ y_0,\infty)$.
	Monotonicity of the optimal strategy would extend to cases with non-zero bid-ask spread, as explained below. %
\end{remark}

\begin{remark}[On non-zero bid-ask spread]\label{rem:nonzerospread}
	The results in this section also have implications for models with non-zero bid-ask spread.
	Indeed, if the initial market impact is not too small ($Y_{0-} \geq y_0$) and the LOB bid side is described as in our model, the optimal liquidation strategy in a model with non-zero bid-ask spread would still be monotone (so  relate only to the LOB bid side) and would be described by \cref{thm:optimal liquidation with intermediate buy orders}, since
	\[
		\sup_{\assetsProcess \in \admissibleSellStrategies{\assetsProcess_{0-}}} J(Y_{0-};\assetsProcess) 
		= \sup_{\assetsProcess \in \admissibleFiniteVariationStrategies{\assetsProcess_{0-}}} J(Y_{0-};\assetsProcess) 
		\geq \sup_{\assetsProcess \in \admissibleFiniteVariationStrategies{\assetsProcess_{0-}}} J^{\text{spr}}(Y_{0-};\assetsProcess),
	\]
	with $J^{\text{spr}}(Y_{0-};\assetsProcess)$ denoting the cost functional for the non-zero spread model, as $J(Y_{0-}, \cdot)$ and $J^{\text{spr}}(Y_{0-}, \cdot)$ coincide on $\admissibleSellStrategies{\assetsProcess_{0-}}$  and the inequality is due to the spread.%
\end{remark}

\begin{example}[Comparing multiplicative and additive impact]\label{ex:lorenz-schied}

\newcommand{\additiveMartingalePart}{N}
\newcommand{\additiveFiniteVariationPart}{K}

We want to highlight some differences between the optimal liquidation strategies for our model in comparison to the additive transient impact model of Lorenz and Schied~\cite{LorenzSchied13}, which generalizes the continuous time model as in \cite{ObizhaevaWang13}  by permitting non-zero drift for the unaffected price process. 
We will give a simple specification for both models below, which we will call the LS- and the mLOB-model.
With geometric Brownian motion $M_t := \scE(\sigma W)_t$,  Brownian motion $W$ and $\sigma>0$,
we take the unaffected price for both models to be given as in the standard Black-Scholes model by
\begin{equation}\label{eq:BSmodelfor compareLS}
	S^0_t = \baseS_t := \baseS_0 \scE(\mu t + \sigma W)_t = \baseS_0 + \additiveMartingalePart_t + \additiveFiniteVariationPart_t\quad \text{with} \quad \baseS_0  \in (0,\infty),
\end{equation}
with martingale part $N_t:= \int_0^t \sigma \baseS_s \diff W_s$ and finite variation part $\additiveFiniteVariationPart_t:= \int_0^t \mu \baseS_s \diff s$.

For bounded semimartingale strategies $X$ on $[0,T]$ with $X_{0-} = x$ and $X_t = 0$ for $t \ge T$, \cite{LorenzSchied13} define the price
at which trading occurs by
\(
	S^X_t := S^0_t + \eta E^X_{t-} \,,
\)
where $E^X_t := e^{-\rho t} \int_{[0,t]} e^{\rho s} \diff X_s$ is the volume impact process in a block-shaped LOB of height $1/\eta\in (0,\infty)$.
Note that $\diff E^X_t = -\rho E^X_t \diff t + \diff X_t$\,, and the same ODE is adhered by $Y^\assetsProcess_t$ and $\Theta$ by \eqref{eq:deterministic Y_t dynamics} for the resilience function  $h(y):= \rho y$.
We let $Y_{0-}:=0$ to have $Y=E^X$.

For the comparison, we still have to specify a multiplicative order book with similar features as the additive one from the LS-model.
Both order books should admit infinite market depth (LOB volume) for sell and for buy orders; and the prices should initially be similar for small volume impact $y$, i.e.~$\baseS_0 + \eta y \approx \baseS_0 f(y)$.
\Cref{ex:order book densities} then suggests as a simple specification $f(y) = e^{y/c}$
for the mLOB-model;
with $c:= \baseS_0 / \eta$ it further satisfies the requirement of similar prices up to first order.
Without loss of generality let $\eta=1$.
In the LS-model, the liquidation costs to be minimized in expectation are given by \cite[Lemma~2.5]{LorenzSchied13} as
\[
	\scC(X):= \int_{[0,T]} S^0_{t-} \diff X_t + [ S^0, X ]_T + \int_{[0,T]} E^X_{t-} \diff X_t + \frac{1}{2} [X]_T\,.
\]
According to \cite[Theorem~2.6]{LorenzSchied13}, the optimal semimartingale strategy $X$ with $X_{0-} = x$, minimizing $\EE[\scC(X)]$, is of the form
\begin{align*}
	X_t &= \frac{x(1+\rho(T-t)) - \frac{1}{2}(1+\rho t)Z_0}{2+\rho T} 
			- \frac{1}{2}\int_{(0,t]} \varphi(s) \diff Z_s 
			+ \frac{1}{2\rho} \additiveFiniteVariationPart'_t 
	\\	&\qquad- \rho\int_0^t \paren[\bigg]{ \frac{1}{2} \int_{(0,s]} \varphi(r) \diff Z_r + \frac{1}{2} \additiveFiniteVariationPart_s } \diff s\,,\quad t \in [0,T),
\end{align*}
 with $\varphi(t) = (2 + \rho(T-t))^{-1}$, derivative $\additiveFiniteVariationPart'_t := \diff \additiveFiniteVariationPart_t / \!\diff t = \mu \baseS_t$ and with $Z_t$ being equal to
\(
 \EE\brackets[\Big]{ \additiveFiniteVariationPart_T + \rho\int_0^T \additiveFiniteVariationPart_s \diff s \Bigm| \scF_t }.
\)
For unaffected price dynamics of Black-Scholes  type, this yields
$Z_0 = \paren[\big]{ (1 - e^{\mu T})(1+\frac{\rho}{\mu}) + \rho T } \baseS_0$ and
\(
	\diff Z_t = \paren[\big]{ \paren[\big]{ 1 - e^{\mu (T-t)} } \paren[\big]{1 + \frac{\rho}{\mu}} + \rho\, (T-t)  } \sigma \baseS_t \diff W_t\,.
\)
In particular, short-selling may occur and liquidation ends at time $T$ with a final block sale.
For the chosen price dynamics \eqref{eq:BSmodelfor compareLS}, the optimal liquidation strategy $X$ in the LS-model is a non-deterministic adapted semimartingale. 
As noted in \cite{LorenzSchied13}, it is not of finite variation.
In contrast, cf.\ \cref{rmk:optimal strategy is deterministic}, the optimal strategy from \cref{thm:optimal liquidation with intermediate buy orders} in our mLOB-model is deterministic and of bounded variation. 
As noted there, by continuity arguments our optimal strategy could be shown to be also optimal within a larger class of  bounded semimartingale strategies.
However, note that optimization in the mLOB-model is over a smaller set of strategies without short-selling.

If the parameters $\mu$, $\rho$, $\baseS_0$ and $\assetsProcess_{0-}$ are such that our optimal strategy for the infinite time horizon problem liquidates until the given time $T$, then it is clearly also optimal among all liquidation-strategies on $[0,T]$. 
Otherwise (if $T$ is too small), the \enquote{short-time-liquidation} problem in the case of non-zero drift $\mu$ in our model is still open, cf.\ \cref{rmk: finite time horizon; no drift; convexity}.
By \cref{ex:simple-lob}, for every $T$, $\rho$, $\baseS_0$ and $\assetsProcess_{0-}$, there exists some $\delta = \gamma - \mu$ such that liquidation occurs until time $T$.
\begin{figure}
	\centering
	\begin{overpic}[width=0.8\textwidth]{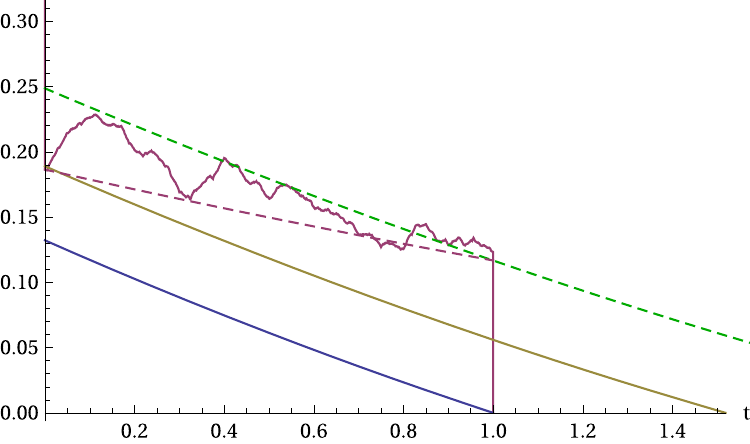}
		\put (7,60) {$\assetsProcess_t, X_t$}
	\end{overpic}
	\caption{
		Optimal risky asset position $X$, resp.\ $\assetsProcess$, over time with $\protect\baseS_0 = 1$, initial position $\protect\assetsProcess_{0-} = X_{0-} = 1$, $\mu = -0.2$, $\sigma = 0.5$, $\rho = 0.3$, in different models. 
		Red is one realization $X_\cdot(\omega)$ of the LS-model for time horizon $T=1$ with a red dashed line for $\EE[X_t]$.
		Yellow is the optimal strategy $\assetsProcess$ for our mLOB-model with no additional impatience ($\gamma=0$); %
		blue is with impatience $\gamma > 0$ %
		such that our mLOB-model liquidates in time $T=1$;
		green is with `patience' ($\gamma < 0$, i.e.\ reduced downwards drift $\delta=\gamma-\mu$ ) such that $\assetsProcess_T = \EE[X_{T-}]$. %
	}
	\label{fig:ls-bbf-strategies-impatience}
\end{figure}
\Cref{fig:ls-bbf-strategies-impatience} displays common realizations for optimal strategies for the LS-model and (three variants of) the mLOB-model with parameters chosen to highlight their qualitative differences.
In the mLOB-model there is no terminal block trade due to the infinite time horizon.
Instead one can choose impatience $\gamma$ to adjust the time-to-liquidation (blue line) or -- in a more ad-hoc manner -- to reach a given amount of assets at some fixed time (green line).
The initial position $X_{0-} = \baseS_0$ to be liquidated is taken to be large, being the total amount of shares offered at positive prices on the bid side of the block-shaped additive order book (at $t=0$).
Hence, for the considered Black-Scholes model, the probability $p:= \PP\brackets{ \exists t \in [0,T]: S^{X^T}_t < 0 }$ of observing negative prices $S^X_t$ under the optimal strategy $X=X^T$ in the LS-model could be high if $T,\sigma,X_{0-}$ are not sufficiently small: for parameters as from \cref{fig:ls-bbf-strategies-impatience} one obtains e.g.\ $p \approx 0.7$. 

Although (unaffected) returns $d\baseS/\baseS$ are i.i.d.\ and the postulated order book shape is invariant over time, the figure shows frequent and stochastic fluctuations between buying and selling for the  LS-model. 
In this sense, the optimal strategy in the LS-model exhibits transaction-triggered price manipulation in the spirit of \cite[Definition~1]{AlfonsiSchiedSlynko12} (in continuous time) for negative drift $\mu<0$, whereas such is not the case in the mLOB-model for  $Y_0$  being zero by \cref{thm:optimal liquidation with intermediate buy orders}, cf.\ \cref{rem:round-trips-in-depressed-market}.
Let us note that in the Lorenz-Schied model one would obtain a deterministic optimal strategy (of bounded variation) if the unaffected base price would be taken to be not of (multiplicative) Black-Scholes but of (additive) Bachelier type $\!\diff S^0_t = \mu \diff t + \sigma \diff W_t$. 

This indicates that additive impact models are better suited for additive (Bachelier) price dynamics, while a multiplicative impact model suits multiplicative (Black-Scholes) price dynamics.
Let us remark that, of course, stochastic optimal strategies can also arrise in multiplicative models, if relevant state variables are stochastic, see e.g.\ \cite{BechererBilarevFrentrup2016-stochastic-resilience}.
It is fair to note that additive models for asset prices and  price impact
have the benefit of easier analysis, in particular for the martingale case without drift.
We believe that multiplicative models offer benefits from a conceptual point of view and also for applications where time horizon is not small.
Liquidation of an asset position that is very large (relative to LOB depth), say by an institutional investor, clearly could  require a longer horizon; 
the econometric studies \cite[p.1152]{ChanL95} or \cite{MaugEtal}, e.g., consider trade sequences of a month.  
Also optimal investment and hedging problems may be posed for maturities not being small, cf.\ \cite{PhamEtal07,BouchardLoeperZhou16}.
On short horizons, additive models may provide good tractable approximations for practical implementation, as probabilities for negative (model) prices can be small, see e.g.\ \cite{SchachermayerTeichmann, Forsyth12} or \cite[p.514]{AlfonsiSchiedSlynko12}.

\end{example}

\appendix
\section{Appendix}

\label{appendix:proofs about value functions}

To prove the variational inequalities that are essential for verification, it will help to have
\begin{lemma}\label{lemma:V on boundary as integral}
	For all $\theta \ge 0$ we have
	\[
		\Vboundary(\theta) = \int_0^\theta f(y(x)) \exp\paren[\bigg]{ \int_x^\theta \frac{\delta}{h(y(z))} \diff z } \exp\paren[\bigg]{ \int_{y(\theta)}^{y(x)} \frac{\delta}{h(y)} \diff y } \diff x.
	\]
\end{lemma}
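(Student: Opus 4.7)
I would prove the identity by showing that both sides satisfy the same first-order linear ODE with the same initial condition at $\theta=0$, and then invoke uniqueness for linear ODEs.

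Let $I(\theta)$ denote the right-hand side of the claimed identity. First I would check the initial condition: at $\theta=0$ the outer integral is empty, so $I(0)=0$; and $\Vboundary(0) = 0$ follows directly from \eqref{eq: V boundary} because $y(0) = y_0$ and $\delta + h(y_0)\lambda(y_0) = 0$ by \cref{cond:model parameters}.

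Next I would differentiate $I(\theta)$ using Leibniz's rule. The boundary term from the upper limit $x=\theta$ contributes $f(y(\theta))$, since both exponentials collapse to $1$ there. Differentiating the two $\theta$-dependent exponentials inside yields a factor $\frac{\delta}{h(y(\theta))} - \frac{\delta y'(\theta)}{h(y(\theta))} = \frac{\delta(1-y'(\theta))}{h(y(\theta))}$ multiplying $I(\theta)$. Hence
\begin{equation}\label{eq:plan-ode}
	I'(\theta) = f(y(\theta)) + \frac{\delta\bigl(1-y'(\theta)\bigr)}{h(y(\theta))}\,I(\theta).
\end{equation}

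I would then show that $\Vboundary$ solves the same ODE \eqref{eq:plan-ode} by using the free boundary equations already established for $V$. Differentiating $\Vboundary(\theta) = V(y(\theta),\theta)$ gives $\Vboundary'(\theta) = y'(\theta) V_y(y(\theta),\theta) + V_\theta(y(\theta),\theta)$. On the sell side of the boundary, \eqref{eq:var ineq III} yields $V_\theta = f - V_y$, while on the wait side \eqref{eq:var ineq I} gives $V_y = -\delta V/h(y)$. By the smooth-fit construction in \cref{subsection: smooth-pasting}, $V_y$ is continuous across the boundary, so both identities can be combined at $y=y(\theta)$ to get $V_y(y(\theta),\theta) = -\delta\Vboundary(\theta)/h(y(\theta))$ and
\[
	\Vboundary'(\theta) = y'(\theta)\cdot\frac{-\delta\,\Vboundary(\theta)}{h(y(\theta))} + f(y(\theta)) + \frac{\delta\,\Vboundary(\theta)}{h(y(\theta))} = f(y(\theta)) + \frac{\delta\bigl(1-y'(\theta)\bigr)}{h(y(\theta))}\,\Vboundary(\theta),
\]
which is exactly \eqref{eq:plan-ode}. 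Since the coefficients in \eqref{eq:plan-ode} are continuous on $[0,\infty)$ (as $h(y(\theta))$ stays bounded away from zero on compacts in $(y_\infty, y_0]$, and $y'(\theta)$ is well-defined by \cref{lemma: boundary}), uniqueness of solutions to the linear ODE with initial datum $0$ forces $I \equiv \Vboundary$.

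The main obstacle is really just bookkeeping: making sure the signs and orientations of the two nested exponentials in $I(\theta)$ are handled correctly when applying Leibniz, and justifying the use of the smooth-fit identity $V_y(y(\theta),\theta) = -\delta\Vboundary(\theta)/h(y(\theta))$ at $\theta=0$ (where the free boundary meets $\theta=0$ at $y_0$); this however is immediate once $V$ has been shown to lie in $C^1$ in \cref{lemma: V continuously differentiable}. All other steps reduce to routine calculus.
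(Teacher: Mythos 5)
Your overall strategy is sound and genuinely different from the paper's: you show both sides vanish at $\theta=0$ and satisfy the linear ODE $I'(\theta) = f(y(\theta)) + \tfrac{\delta(1-y'(\theta))}{h(y(\theta))}I(\theta)$ and invoke uniqueness, whereas the paper computes $\int_0^\theta \tfrac{\delta}{h(y(z))}\diff z$ by the change of variables $z=\theta(y)$ together with \eqref{eq:theta' common} and then reduces the resulting integral to \cref{lemma:integral for Vboundary}. Your Leibniz computation for $I$ and the initial condition $I(0)=\Vboundary(0)=0$ are correct.

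The genuine gap is in the justification that $\Vboundary$ satisfies the same ODE. You combine $V_\theta=f-V_y$ from the sell side with $V_y=-\delta V/h$ from the wait side at $y=y(\theta)$, and you justify matching $V_y$ across the boundary by appealing to the smooth-fit construction and to \cref{lemma: V continuously differentiable}. Within the paper's architecture this is circular: the $C^1$ property, in particular the equality $V^{\scW}_y=V^{\scS_1}_y$ on the boundary, is established in the proof of \cref{lemma: V continuously differentiable} via \cref{lemma: difficult sell region inequality}, whose very first step ("Using \cref{lemma:V on boundary as integral}, we get \eqref{eq: theta derivative of Vboundary}") uses precisely the lemma you are proving; and \cref{subsection: smooth-pasting} is only a heuristic derivation that assumes a smooth solution exists, so it cannot be cited as an established property of the constructed $V$. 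The gap is fixable without any appeal to $V$ on the sell side: since $\Vboundary(\theta)=V^{\scW}(y(\theta),\theta)$ and $V^{\scW}_y=-\delta V^{\scW}/h$ holds identically by \eqref{eq:V wait}, it suffices to know $V^{\scW}_y+V^{\scW}_\theta=f$ at $y=y(\theta)$, and this is a direct algebraic consequence of the definitions \eqref{eq: V boundary}, \eqref{eq:C on the boundary} and of the boundary ODE \eqref{eq:boundary derivative} (it is the consistency relation $M_1'(y)=M_2(y)\theta'(y)$, and it is exactly the equality noted at the end of the proof of \cref{lemma: wait region inequality}, which does not rely on the present lemma); equivalently, one can differentiate the explicit formula \eqref{eq: V boundary} directly and use \eqref{eq:theta' common}. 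With that replacement — plus your (correct) remarks that $h(y(\theta))\le h(y_0)<0$ and that $y'(\theta)$ is continuous by \cref{lemma: boundary}, so the linear ODE has a unique solution with zero initial datum — your argument goes through.
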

\begin{proof}
	Using \cref{eq:theta' common}, one gets
	\begin{align*}
		\hspace{-\mathindent}\int_0^\theta \!\! \frac{\delta}{h(y(z))} \diff z &= \int_{y_0}^{y(\theta)} \frac{\delta}{h(y)} \theta'(y) \diff y
	\\		&= \int_{y_0}^{y(\theta)} \frac{\delta}{h(y)} \paren[\bigg]{1 + \frac{h(y) \lambda(y)}{\delta} - \frac{h(y)h''(y)}{\delta h'(y)} + \frac{h(y) \paren[\big]{h\lambda + h' + \delta}'(y)}{\delta \paren[\big]{h\lambda + h' + \delta}(y)}} \diff y
	\\		&= \squeeze[.5]{ \int_{y_0}^{y(\theta)} \!\! \frac{\delta}{h(y)} \diff y + \brackets[\big]{\log f(y)}_{y_0}^{y(\theta)} - \brackets[\big]{\log {h'(y)}}_{y_0}^{y(\theta)} + \brackets[\Big]{\log {\paren[\big]{h\lambda + h' + \delta}(y)}}_{y_0}^{y(\theta)} }
	.\end{align*}
	Thus it follows
	\[
		\exp\paren[\bigg]{ \int_0^\theta \frac{\delta}{h(y(z))} \diff z } = \frac{1}{f(y_0)} \exp\paren[\bigg]{ \int_{y_0}^{y(\theta)} \frac{\delta}{h(y)} \diff y }  \paren[\bigg]{ \frac{f\, (h\lambda + h' + \delta)}{h'} }\!(y(\theta))
	,\]
	which implies
	\[
		\squeeze[.1]{ \exp\paren[\bigg]{ \int_x^\theta \!\! \frac{\delta}{h(y(z))} \diff z + \!\int_{y(\theta)}^{y(x)} \!\! \frac{\delta}{h(y)} \diff y } = \paren[\bigg]{ \frac{f\, (h\lambda + h' + \delta)}{h'} }\!(y(\theta)) \paren[\bigg]{ \frac{h'}{f\, (h\lambda + h' + \delta)} }\!(y(x)) }
	.\]
	Integration using \cref{lemma:integral for Vboundary} after multiplication with $f(y(x))$ yields the claim.
\end{proof}

\begin{lemma}\label{lemma:integral for Vboundary}
	Let $\theta \ge 0$.
	Then
	\(
		\int_0^\theta \paren[\big]{ \frac{h'}{h\lambda + h' + \delta} }\!(y(x)) \diff x = \paren[\big]{ \frac{h\, (h\lambda + \delta)}{\delta (h\lambda + h' + \delta)} }\!(y(\theta))
	.\)
\end{lemma}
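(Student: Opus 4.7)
\textbf{Proof plan for Lemma \ref{lemma:integral for Vboundary}.}

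The natural approach is to show that both sides of the identity agree as functions of $\theta$ by checking (i) they coincide at $\theta=0$, and (ii) they have the same derivative with respect to $\theta$. Writing
\[
  P(y):=h(y)\lambda(y)+\delta,\qquad D(y):=(h\lambda+h'+\delta)(y)=P(y)+h'(y),\qquad N(y):=h(y)P(y),
\]
the right-hand side becomes $F(\theta):=(N/(\delta D))(y(\theta))$. Since $y(0)=y_0$ solves $h(y_0)\lambda(y_0)+\delta=0$, we have $P(y_0)=0$ and hence $N(y_0)=0$, so $F(0)=0$, matching the left-hand side.

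The remaining task is to show
\begin{equation}\label{eq:deriv-match}
  F'(\theta)\;=\;\frac{h'(y(\theta))}{D(y(\theta))},
\end{equation}
whereupon the Fundamental Theorem of Calculus finishes the proof. Using $y'(\theta)=1/\theta'(y)$ and the chain rule,
\[
  F'(\theta)\;=\;\frac{1}{\delta D(y)^2}\bigl(N'(y)D(y)-N(y)D'(y)\bigr)\cdot\frac{1}{\theta'(y)}\Bigg|_{y=y(\theta)}.
\]
So \eqref{eq:deriv-match} is equivalent to the algebraic identity
\begin{equation}\label{eq:alg-identity}
  N'(y)D(y)-N(y)D'(y)\;=\;\delta\,h'(y)\,D(y)\,\theta'(y).
\end{equation}

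The core of the proof is to verify \eqref{eq:alg-identity} directly, using the ODE \eqref{eq:theta' common} for $\theta'(y)$. On the left, factor $N=hP$ to get $N'=h'P+hP'$ and use $D'=P'+h''$, which after expanding and using $D-P=h'$ collapses to
\[
  N'D-ND'=h'P D+h h'D'-h h''D.
\]
On the right, multiplying the ODE by $\delta h' D$ and using $P=h\lambda+\delta$ gives
\[
  \delta h' D\,\theta'(y)=h'D(\delta+h\lambda)-h h''D+h h'D'=h'P D+h h'D'-h h''D,
\]
which matches. This establishes \eqref{eq:alg-identity}, hence \eqref{eq:deriv-match}, and the lemma follows.

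The only delicate step is the algebraic collapse leading to \eqref{eq:alg-identity}; once one introduces $P$ and $D$ as above and exploits the relation $D=P+h'$, the cancellations are clean and routine. No analytic difficulty arises because $D$ is bounded away from $0$ on $(y_\infty,y_0]$ and the ODE for $\theta$ is smooth there.
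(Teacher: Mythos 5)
Your proof is correct and is essentially the paper's own argument: both check that the two sides vanish at $\theta=0$ and then match derivatives via the ODE \eqref{eq:theta' common}, your identity $N'D-ND'=\delta\,h'D\,\theta'$ (with $P=h\lambda+\delta$, $D=P+h'$, $N=hP$) being exactly the computation in the paper, merely parametrized by $\theta$ instead of by $y$. The only blemish is the closing remark that $D=h\lambda+h'+\delta$ is bounded away from $0$ on $(y_\infty,y_0]$ — in fact $D\to 0$ as $y\searrow y_\infty$ — but all that is needed is $D(y(\theta))>0$ for the fixed $\theta$, which holds because $y(\theta)>y_\infty$ and $D$ is strictly increasing.
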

\begin{proof}
	At $\theta = 0$, both sides equal zero, so it suffices to show equality of their derivatives.
	By \cref{eq:theta' common}, we have as functions of $y = y(\theta)$:
	\begin{align*}
		\squeeze[2]{ \frac{h'}{h\lambda + h' + \delta}\theta' } &= \frac{h'}{h\lambda + h' + \delta} \paren[\bigg]{1 + \frac{h \lambda}{\delta} - \frac{h h''}{\delta h'} + \frac{h \, (h\lambda + h' + \delta)'}{\delta \, (h\lambda + h' + \delta)}}
	\\		&= \frac{h' \, (\delta + h\lambda) (h\lambda + h' + \delta) - h h'' \, (h\lambda + h' + \delta) + h h' \, (h\lambda + h' + \delta)' }{\delta \, (h\lambda + h' + \delta)^2}	
	\\		&= \frac{h' \, (h\lambda + h' + \delta)^2 - \paren[\big]{(h')^2 + h h''} (h\lambda + h' + \delta) + h h' \, (h\lambda + h' + \delta)'}{\delta \, (h\lambda + h' + \delta)^2}
		\hspace{-1em}
	\\		&= \frac{h'}{\delta} - \paren[\bigg]{ \frac{hh'}{\delta \, (h\lambda + h' + \delta)} }'
			= \paren[\bigg]{ \frac{h \, (h\lambda + \delta)}{\delta \, (h\lambda + h' + \delta)} }'. \qedhere
	\end{align*}
\end{proof}

\begin{lemma}\label{lemma: wait region inequality}
	We have inequality~\eqref{eq:var ineq IV}, i.e.\  $V^{\scW}_y + V^{\scW}_\theta > f$, holding in $\scW$.
\end{lemma}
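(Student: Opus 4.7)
The strategy is to introduce $W(y,\theta) := V^{\scW}_y(y,\theta) + V^{\scW}_\theta(y,\theta) - f(y)$, show that $y \mapsto W(y,\theta)$ solves a linear first-order ODE on $\{y \le y(\theta)\}$ with vanishing value at $y=y(\theta)$ and strictly negative forcing term, and then read off positivity by the variation-of-constants formula.

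First I would exploit the wait-region PDE $-\delta V^{\scW} = h(y)\,V^{\scW}_y$ from \eqref{eq:var ineq I}: differentiating once more yields $V^{\scW}_{yy} = \delta(\delta + h'(y))\,V^{\scW}/h(y)^2$ and $V^{\scW}_{\theta y} = -\delta\, V^{\scW}_\theta/h(y)$. Substituting these into $W_y = V^{\scW}_{yy} + V^{\scW}_{\theta y} - f'(y)$, eliminating $V^{\scW}_\theta$ via $V^{\scW}_\theta = W - V^{\scW}_y + f = W + \delta V^{\scW}/h + f$, and using $f' = f\lambda$, I obtain
\[
W_y + \frac{\delta}{h(y)}\,W \;=\; \frac{\delta\, h'(y)\, V^{\scW}(y,\theta)}{h(y)^2} - \frac{\delta f(y)}{h(y)} - f(y)\lambda(y) \;=:\; g(y,\theta).
\]
The boundary condition $W(y(\theta),\theta) = 0$ is supplied by the $C^1$-matching across $\partial \scW = \partial \scS$ (cf.\ \cref{lemma: V continuously differentiable}), where $V^{\scS}$ satisfies $V^{\scS}_y + V^{\scS}_\theta = f$ by construction.

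The decisive step is to show $g<0$ in $\scW$. Let $v(y) := f(y)h(y)(\delta+h(y)\lambda(y))/(\delta h'(y))$, i.e.\ $\Vboundary$ viewed as a function of the $y$-coordinate along the free boundary. A direct computation using $f'=f\lambda$ verifies $\delta h'(y)v(y)/h(y)^2 = f(y)(\delta + h(y)\lambda(y))/h(y) = \delta f(y)/h(y) + f(y)\lambda(y)$, so that $v$ is precisely the function annihilating the source; hence
\[
g(y,\theta) \;=\; \frac{\delta\, h'(y)}{h(y)^2}\,\bigl[\,V^{\scW}(y,\theta) - v(y)\,\bigr],
\]
and it suffices to prove $V^{\scW}(y,\theta) < v(y)$ on $\scW$. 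From the factorization $V^{\scW}(y,\theta) = \Vboundary(\theta)\,\phi(y)/\phi(y(\theta)) =: Q(\theta)\phi(y)$ in \eqref{eq:V wait}, the ratio $V^{\scW}_\theta/V^{\scW} = Q'(\theta)/Q(\theta)$ depends only on $\theta$. Evaluating it on the free boundary using $V^{\scW}_y + V^{\scW}_\theta = f$, $V^{\scW}_y = -\delta V^{\scW}/h$, and the explicit form of $\Vboundary$ gives
\[
\frac{Q'(\theta)}{Q(\theta)} \;=\; \frac{\delta\,(h\lambda + h' + \delta)}{h\,(\delta + h\lambda)}\bigg|_{y(\theta)}.
\]
For $\theta > 0$ one has $y(\theta)\in(y_\infty,y_0)$, hence $h<0$, $\delta + h\lambda < 0$ (as $(h\lambda)'>0$ and $h(y_0)\lambda(y_0) = -\delta$), and $h\lambda + h' + \delta > 0$ by \cref{cond:model parameters}, so the ratio is strictly positive and $V^{\scW}$ is strictly increasing in $\theta$ on $\scW$. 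Since $(y,\theta)\in\scW$ entails $\theta<\theta(y)$, this yields $V^{\scW}(y,\theta) < V^{\scW}(y,\theta(y)) = \Vboundary(\theta(y)) = v(y)$. The remaining case $\theta=0$ is even simpler: $V^{\scW}(y,0) = 0 < v(y)$, the latter by inspection of signs of the factors of $v$ on $(-\infty, y_0)$.

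Finally, the positive integrating factor $\mu(y) := 1/\phi(y) = \exp\bigl(\int \delta/h(x)\diff x\bigr)$ turns the ODE into $(W\mu)_y = g\mu$. Integrating from $y$ up to $y(\theta)$ and using $W(y(\theta),\theta)=0$,
\[
W(y,\theta)\,\mu(y) \;=\; -\int_y^{y(\theta)} g(x,\theta)\,\mu(x)\diff x \;>\; 0 \qquad \text{for } y < y(\theta),
\]
which proves the lemma. The main obstacle is identifying $v(y)$ as the unique boundary-data function annihilating the forcing term and establishing strict monotonicity $V^{\scW}_\theta>0$ on $\scW$; once these are in place, the final ODE argument is routine.
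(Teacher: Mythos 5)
Your reduction is sound as far as it goes: $W:=V^{\scW}_y+V^{\scW}_\theta-f$ does satisfy $W_y+\frac{\delta}{h}\,W=g$ with $g=\frac{\delta h'}{h^2}\bigl(V^{\scW}-v\bigr)$ for $v:=\frac{fh(\delta+h\lambda)}{\delta h'}$, the boundary identity $W(y(\theta),\theta)=0$ is available (though you should take it directly from the defining relations \eqref{eq:C^2 smooth pasting boundary}--\eqref{eq:C on the boundary} for $C,C'$ rather than from \cref{lemma: V continuously differentiable}, whose proof in the paper invokes the present lemma, so your citation is circular as stated), and your formula $Q'/Q=\delta(h\lambda+h'+\delta)/\bigl(h(\delta+h\lambda)\bigr)\big|_{y(\theta)}>0$ is correct.

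The genuine gap is the decisive claim that $g<0$ on all of $\scW$. Your justification, that $(y,\theta)\in\scW$ entails $\theta<\theta(y)$ and hence $V^{\scW}(y,\theta)<V^{\scW}(y,\theta(y))=v(y)$, only makes sense for $y\in(y_\infty,y_0]$: by \cref{lemma: boundary} the function $\theta(\cdot)$ is defined only on that interval and explodes as $y\searrow y_\infty$, whereas $\scW$ contains every point with $y\le y_\infty$ and arbitrary $\theta\ge 0$. Worse, the claim is in general false there: in \cref{ex:simple-lob} with $r=1$, i.e.\ $f(y)=e^{y/c}$ and $h(y)=\beta y$, one has $V^{\scW}(y,\theta)=\Vboundary(\theta)\bigl(y(\theta)/y\bigr)^{\delta/\beta}$, which decays only polynomially as $y\to-\infty$, while $v(y)$ carries the factor $f(y)=e^{y/c}$ and decays exponentially; hence $V^{\scW}(y,\theta)>v(y)$, i.e.\ $g(y,\theta)>0$, once $y$ is negative enough (e.g.\ $\beta=c=\delta=1$, $y(\theta)=-1.5$, $y=-10$ gives $V^{\scW}\approx 0.025$ versus $v\approx 0.004$). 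Consequently, whenever the integration interval $(y,y(\theta))$ reaches into that region, the integrand in your final identity $W(y,\theta)\mu(y)=-\int_y^{y(\theta)}g(x,\theta)\mu(x)\diff x$ changes sign and positivity of $W(y,\theta)$ no longer follows. Your argument therefore establishes the inequality only on the strip $\{(y,\theta):y_\infty<y<y(\theta)\}$, not on all of $\scW$ as the lemma (and the verification, which must handle arbitrary initial impact $Y_{0-}$) requires; the region $y\le y_\infty$ needs a separate argument, for instance a direct pointwise estimate of the explicit expression $V^{\scW}_y+V^{\scW}_\theta=C(\theta)\phi'(y)+C'(\theta)\phi(y)$ against $f(y)$, which is the route the paper takes.
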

\begin{proof}
	It suffices to prove that $y\mapsto \bp{(V^{\scW}_y + V^{\scW}_\theta)/f}(y, \theta)=:k(y)$ is strictly decreasing on $(-\infty, y(\theta)]$ by smooth pasting on the boundary (i.e.~$k(y(\theta)) = 1$), see \cref{subsection: smooth-pasting}. Using notation from \cref{subsection: smooth-pasting}, we have for $y \leq y(\theta)$
	\begin{align*}
		f(y) k'(y) = C(\theta)\bp{ \phi''(y) - \lambda(y)\phi'(y) } + C'(\theta) \bp{ \phi'(y) - \lambda(y)\phi(y) }. 
	\end{align*}
	Since $\phi' = -\delta \phi/h$, we obtain  $\phi' - \lambda\phi = -\phi(\lambda h + \delta)/h$ and $\phi'' - \lambda\phi' = \delta \phi (\lambda h + h' + \delta) /h^2$.
	They are both negative for $y< y_\infty$.
	Hence, $k' < 0$ on $(-\infty, y_\infty)$ because $C(\theta),C'(\theta)> 0$ for $\theta > 0$, see \eqref{eq:C on the boundary}.
	To verify $f(y) k'(y) < 0$ on $[y_\infty, y(\theta))$, using \eqref{eq:C on the boundary} to get the form of $C'/C$, we check that $y\mapsto \bp{\frac{\lambda h + h' + \delta}{h(\lambda h + \delta)}}(y)$ is strictly increasing on $[y_\infty, y_0)$.
	Indeed,
	\[
		\hspace{-0.5\mathindent}
		\squeeze[2]{
			h^2(\lambda h + \delta)^2\frac{\diff}{\diff y} \paren[\bigg]{ \frac{\lambda h + h' + \delta}{h(\lambda h + \delta)} }
			= \underbrace{(\lambda h + h' + \delta)'}_{>0}\underbrace{h (\lambda h + \delta)}_{\geq 0} - \underbrace{(\lambda h + h' + \delta)}_{\geq 0} \bp{ h(\lambda h + \delta) }'
			>0
		}
	\]
	on $[y_\infty, y_0)$, because $h$ and $\lambda h + \delta$ are increasing and negative there, and thus $h(\lambda h + \delta)$ is strictly decreasing, see \cref{cond:model parameters}.
\end{proof}

\noindent
Recall from \cref{subsection: Construction of V and the optimal strategy} the regions $\scS_1 :=\{(y,\theta)\in \RR \times [0,\infty) \mid y(\theta) < y < y_0 + \theta \}$ and $\scS_2 :=\{(y,\theta)\in \RR \times [0,\infty) \mid y_0 + \theta < y \}$.

\begin{lemma}\label{lemma: simple sell region inequality}
	We have inequality~\eqref{eq:var ineq II},  i.e.\ $-\delta V^{\scS_2} - h(y) V^{\scS_2}_y < 0$, holding in $\scS_2$.
\end{lemma}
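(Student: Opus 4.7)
The plan is to reduce the statement to a one-variable calculus problem. Fix $y$ and consider
\[
	g(\theta) := \delta V^{\scS_2}(y,\theta) + h(y)\, V^{\scS_2}_y(y,\theta)\,,
\]
so that the target inequality becomes $g(\theta) > 0$ for $(y,\theta)\in\scS_2$ with $\theta > 0$. First, from $V^{\scS_2}(y,\theta) = \int_{y-\theta}^{y} f(x)\,\diff x$ one reads off $V^{\scS_2}_y(y,\theta) = f(y) - f(y-\theta)$, and both expressions vanish at $\theta = 0$, giving $g(0) = 0$. It therefore suffices to show $g'(\theta) > 0$ on the relevant range.

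A direct differentiation yields
\[
	g'(\theta) = \delta f(y-\theta) + h(y) f'(y-\theta) = f(y-\theta)\bigl[\,\delta + h(y)\lambda(y-\theta)\,\bigr]\,,
\]
so the whole question reduces to the sign of $\delta + h(y)\lambda(y-s)$ for $s\in[0,\theta)$. For $y\ge 0$ this is immediate since $h(y) \ge 0$ and $\delta > 0$. The substantive case is $y < 0$, and here I would exploit monotonicity of $h$ together with $\lambda > 0$ to estimate $h(y)\lambda(y-s) \ge h(y-s)\lambda(y-s) = (h\lambda)(y-s)$. Since $(y,\theta) \in \scS_2$ forces $y-s > y-\theta > y_0$, the strict monotonicity $(h\lambda)' > 0$ from \cref{cond:model parameters}, combined with the defining relation $(h\lambda)(y_0) = -\delta$, gives $(h\lambda)(y-s) > -\delta$, hence $\delta + h(y)\lambda(y-s) > 0$. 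Integrating $g'$ over $[0,\theta]$ then yields $g(\theta) > 0$, which is the desired inequality.

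The main obstacle is the sign competition in the case $y < 0$: there $h(y) V^{\scS_2}_y < 0$ pulls against $\delta V^{\scS_2} > 0$, and a purely crude estimate would fail. The argument above is the sharp one because the geometry of $\scS_2$ is exactly tailored to the threshold $y_0$ appearing in \cref{cond:model parameters}: being above the line $y = y_0 + \theta$ guarantees that the integration interval $(y-\theta, y]$ sits strictly above $y_0$, which is precisely the region where $h\lambda > -\delta$, so that $\delta V^{\scS_2}$ dominates $|h(y)|\cdot V^{\scS_2}_y$ uniformly along the integration in $\theta$.
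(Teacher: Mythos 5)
Your proposal is correct and follows essentially the same route as the paper: reduce to $g(\theta):=\delta V^{\scS_2}+h(y)V^{\scS_2}_y$ with $g(0)=0$, compute $g'(\theta)=f(y-\theta)\bigl(\delta+h(y)\lambda(y-\theta)\bigr)$, and use monotonicity of $h$ and of $h\lambda$ together with $(h\lambda)(y_0)=-\delta$ and the geometry of $\scS_2$ (which forces $y-\theta>y_0$) to conclude positivity. Your case split $y\ge 0$ versus $y<0$ is unnecessary but harmless; the paper's single inequality chain covers both cases at once.
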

\begin{proof}
	Fix ${y > y_0}$.
	We will see that $g(\theta):= \delta V^{\scS_2}(y,\theta) + h(y)V^{\scS_2}_y(y,\theta)$ increases for $\theta \in (0,y-y_0]$.
	By \cref{eq: V in simple sell region} we find
	\begin{align*}
		g'(\theta) &= \frac{\diff}{\diff \theta} \paren[\Big]{\delta \int_{y-\theta}^y f(x)\diff x + h(y) \paren[\big]{f(y) - f(y-\theta)}}
		\\		&= \delta f(y-\theta) + h(y) f'(y-\theta)
				= f(y-\theta) \paren[\big]{\delta + h(y)\lambda(y-\theta)}
		\\		&> f(y-\theta) \paren[\big]{\delta + h(y-\theta)\lambda(y-\theta)}
				\ge f(y-\theta) \paren[\big]{\delta + h(y_0)\lambda(y_0)} = 0\,,
	\end{align*}
	by monotonicity of $h$ and $h\lambda$.
	Noting $g(0) = 0$, the claimed inequality follows.
\end{proof}

\begin{lemma}\label{lemma: difficult sell region inequality}
	We have inequality~\eqref{eq:var ineq II}, $-\delta V^{\scS_1} - h(y) V^{\scS_1}_y < 0$, holding in $\scS_1$.
	Moreover
	\begin{align}
		\Vboundary'(\theta) &= f(y(\theta)) + \frac{\delta}{h(y(\theta))} \paren[\big]{1 - y'(\theta)} \Vboundary(\theta) \label{eq: theta derivative of Vboundary}
	\\\hspace{-\mathindent}\text{and }\qquad %
		V^{\scS_1}_y(y,\theta) &= f(y) - f(y-\Delta) - \frac{\delta}{h(y-\Delta)} \Vboundary(\theta - \Delta). \label{eq: y derivative of V in difficult sell region}
	\end{align}
\end{lemma}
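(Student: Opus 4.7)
The plan is to prove the two displayed identities first, since both are needed when handling the inequality. The formula for $\Vboundary'(\theta)$ follows by differentiating $\Vboundary(\theta)=V(y(\theta),\theta)$ and invoking that $V$ is $C^1$ across the free boundary by \cref{lemma: V continuously differentiable}. On the wait side, the product form $V^{\scW}(y,\theta)=C(\theta)\phi(y)$ combined with $\phi'(y)/\phi(y)=-\delta/h(y)$ gives $V_y(y(\theta),\theta)=-\delta\Vboundary(\theta)/h(y(\theta))$; on the sell side, \eqref{eq:var ineq III} gives $V_\theta(y(\theta),\theta)=f(y(\theta))-V_y(y(\theta),\theta)$. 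Substituting both into $\Vboundary'(\theta)=V_y(y(\theta),\theta)\,y'(\theta)+V_\theta(y(\theta),\theta)$ yields the claimed identity.

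For the formula for $V^{\scS_1}_y$, I would differentiate $V^{\scS_1}(y,\theta)=\Vboundary(\theta_*)+\int_{y-\Delta}^{y}f(x)\diff x$ in $y$, writing $\theta_*:=\theta-\Delta$ and using that the implicit relation $y-\theta=y(\theta_*)-\theta_*$ makes $\theta_*$ a function of $y-\theta$ alone, with $\partial_y\theta_*=1/(y'(\theta_*)-1)$ and $\partial_y(y-\Delta)=y'(\theta_*)/(y'(\theta_*)-1)$. After plugging in $\Vboundary'(\theta_*)=f(y-\Delta)+\delta(1-y'(\theta_*))\Vboundary(\theta_*)/h(y-\Delta)$ from the first identity, the factor $(1-y'(\theta_*))$ cancels against $(y'(\theta_*)-1)$ in the denominator, leaving exactly the claimed expression.

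For the inequality itself, I would substitute both identities into $\varphi(y,\theta):=-\delta V^{\scS_1}-h(y)V^{\scS_1}_y$ and set $z:=y-\Delta\in(y_\infty,y_0]$ to obtain
\begin{equation*}
    \varphi \;=\; g(y,z) \;:=\; \delta\Vboundary(\theta(z))\,\frac{h(y)-h(z)}{h(z)} \;-\; \delta\!\int_{z}^{y}f(x)\diff x \;-\; h(y)\paren[\big]{f(y)-f(z)}\,.
\end{equation*}
The plan is to exploit monotonicity of $g$ in $z$ together with two boundary behaviours: $g(y,z)\to 0$ as $z\nearrow y$ whenever $y\in(y_\infty,y_0]$ (by continuity of $V$ across the wait boundary), and $g(y,y_0)=(-\delta V^{\scS_2}-h(y)V^{\scS_2}_y)|_{\theta=y-y_0}<0$ whenever $y>y_0$, by \cref{lemma: simple sell region inequality} applied on the $\scS_1/\scS_2$ interface (using $\Vboundary(\theta(y_0))=\Vboundary(0)=0$). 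Computing $\partial_{z}g$ directly with the identity for $\Vboundary'$ and the closed form $\Vboundary(\theta(z))=f(z)h(z)(\delta+h(z)\lambda(z))/(\delta h'(z))$, I expect the expression to collapse to
\begin{equation*}
    \partial_{z}g(y,z) \;=\; \delta f(z)\,\frac{h(y)-h(z)}{h(z)}\,(\theta'(z)-1)\,\frac{h'(z)+\delta+h(z)\lambda(z)}{h'(z)}\,,
\end{equation*}
which is strictly positive on the relevant range: $h(z)<0$ and $h(y)>h(z)$ make the first ratio negative, $\theta'(z)<0$ by \cref{lemma: boundary}, and $h'(z)+\delta+h(z)\lambda(z)>0$ for $z>y_\infty$ by the definition of $y_\infty$. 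Hence $g$ is strictly increasing in $z$, and together with the boundary values one concludes $g(y,z)<0$ for all $(y,\theta)\in\scS_1$ and their associated $z<y$.

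The main obstacle will be the algebraic collapse of $\partial_{z}g$ to the factored form above; several terms must cancel. The organising principle is to factor out $(h(y)-h(z))/h(z)$ using that $g(y,z)$ vanishes at $y=z$ (wait boundary), and then to factor out $(\theta'(z)-1)$, which emerges naturally once $[\Vboundary(\theta(z))]'=\Vboundary'(\theta(z))\theta'(z)$ is expanded via the first identity. The remaining scalar factor $1+(\delta+h\lambda)/h'=(h'+\delta+h\lambda)/h'$ then appears cleanly when grouping the $\Vboundary$- and $f$-contributions.
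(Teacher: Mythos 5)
Your derivation of \eqref{eq: theta derivative of Vboundary} is circular as written: you invoke \cref{lemma: V continuously differentiable} to justify combining the wait-side value of $V_y$ with the sell-side relation $V_\theta=f-V_y$ in the chain rule for $\Vboundary(\theta)=V(y(\theta),\theta)$, but in the paper's architecture the proof of \cref{lemma: V continuously differentiable} relies on the present lemma (the matching of $V_y$ across the free boundary is precisely $g(0)=0$ together with \eqref{eq: y derivative of V in difficult sell region}). Moreover, the $\theta$-derivative of $V$ from the sell side is itself expressed through $\Vboundary'$, so differentiating from that side is self-referential. The identity must be obtained from data independent of cross-boundary smoothness: either from the integral representation of \cref{lemma:V on boundary as integral} (the paper's route: differentiating that formula in $\theta$ gives \eqref{eq: theta derivative of Vboundary} immediately), or by differentiating the closed form \eqref{eq: V boundary} and using the boundary ODE \eqref{eq:theta' common} together with the smooth-pasting relations \eqref{eq:C on the boundary}, i.e.\ using only the wait-side construction, where $V^{\scW}_y+V^{\scW}_\theta=f$ on the boundary holds by construction (cf.\ the end of the proof of \cref{lemma: wait region inequality}). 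With that repair, your computation of \eqref{eq: y derivative of V in difficult sell region} via $\partial_y\theta_*=1/(y'(\theta_*)-1)$ is exactly the paper's argument.

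Your treatment of the inequality takes a genuinely different, and valid, route. The paper fixes the boundary foot $(y_b,\theta_b)$ and shows that $\Delta\mapsto\paren[\big]{\delta V^{\scS_1}+h\,V^{\scS_1}_y}(y_b+\Delta,\theta_b+\Delta)$ increases, treating $y_b+\Delta>y_0$ by a separate estimate; you instead fix $y$ and move the foot $z=y-\Delta$, anchoring at $z\nearrow y$ (for $y\le y_0$) respectively at $z=y_0$ via \cref{lemma: simple sell region inequality} (for $y>y_0$). The claimed collapse does hold: using $\frac{\diff}{\diff z}\Vboundary(\theta(z))=\Vboundary'(\theta(z))\theta'(z)$, identity \eqref{eq: theta derivative of Vboundary} and the closed form \eqref{eq: V boundary}, one checks that indeed $\partial_z g=\delta f(z)\,\frac{h(y)-h(z)}{h(z)}\,(\theta'(z)-1)\,\frac{(h\lambda+h'+\delta)(z)}{h'(z)}$, and your sign analysis is correct. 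Two small points: (i) \cref{lemma: simple sell region inequality} is stated on the open region $\scS_2$, whereas you evaluate at the interface $\theta=y-y_0$; either note that its proof gives strict positivity for $\theta\in(0,y-y_0]$, or settle for $g(y,y_0)\le 0$ by continuity, which suffices because your monotonicity in $z$ is strict. (ii) $g(y,z)\to 0$ as $z\nearrow y$ is immediate from the formula, so do not appeal to continuity of $V$ across the wait boundary (same circularity issue as above). Finally, the paper's argument yields as a by-product the monotonicity of $y\mapsto\paren[\big]{f\cdot(h\lambda+h'+\delta)/h'}(y)$ on $(y_\infty,y_0]$, which is reused in the proof of \cref{thm: optimal strategy}; your route does not produce this, though it is not needed for the lemma itself.
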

\begin{proof}
	Let $(y,\theta) \in \widebar\scS_1$.
	By \eqref{def:distance to boundary}, we have $\theta - \Delta = \theta(y - \Delta)$, implying
	\begin{align*}
		 \Delta_y &= \frac{\theta'(y-\Delta)}{\theta'(y-\Delta) - 1} = \frac{1}{1 - y'(\theta - \Delta)}.
	\end{align*}
	Using \cref{lemma:V on boundary as integral}, we get $\Vboundary'(\theta) = f(y(\theta)) + \frac{\delta}{h(y(\theta))} \paren[\big]{1 - y'(\theta)} \Vboundary(\theta)$ and thereby
	\begin{align*}
		 \Vboundary'(\theta - \Delta) &= f(y-\Delta) + \frac{\delta}{h(y - \Delta)} \paren[\big]{1 - y'(\theta - \Delta)} \Vboundary(\theta - \Delta).
	\end{align*}
	With \cref{eq: unsimplified V in difficult sell region} it follows that
	\begin{align*}
		V^{\scS_1}_y(y,\theta) &= \Vboundary'(\theta-\Delta) \cdot (-\Delta_y) + f(y) - f(y-\Delta) (1-\Delta_y)
	\\		&= f(y) - f(y-\Delta) - \frac{\delta}{h(y-\Delta)} \Vboundary(\theta - \Delta).
	\end{align*}
	Now we  fix $(y_b, \theta_b):= (y - \Delta, \theta - \Delta)$ on the boundary and vary $\Delta \ge 0$ to show monotonicity of $g(\Delta):= \delta V^{\scS_1}(y_b + \Delta, \theta_b + \Delta) + h(y_b + \Delta) V^{\scS_1}_y(y_b + \Delta, \theta_b + \Delta)$, which equals
	\begin{align*}
		 \delta \Vboundary(\theta_b) \paren[\bigg]{1 - \frac{h(y_b+\Delta)}{h(y_b)}} + \delta \int_{y_b}^{y_b + \Delta}f(x) \diff x 
                  	+ h(y_b + \Delta) \paren[\big]{f(y_b + \Delta) - f(y_b)}
	\end{align*}
	and gives $g(0) = 0$.
	Therefore, one obtains
	\begin{align*}
		g'(\Delta) &= \delta \Vboundary(\theta_b)\frac{-h'(y_b+\Delta)}{h(y_b)} + \delta f(y_b + \Delta) 
	\\			&\qquad + h'(y_b + \Delta) \paren[\big]{f(y_b + \Delta) - f(y_b)} + h(y_b + \Delta) f'(y_b+\Delta)
	\\		&= \squeeze[2]{ -h'(y_b + \Delta) \paren[\bigg]{ \frac{\delta}{h(y_b)} \Vboundary(\theta_b) + f(y_b) } + f(y_b + \Delta) \, \paren[\big]{h\lambda + h' + \delta}\!(y_b + \Delta) }
	\\		&= -h'(y_b + \Delta) f(y_b) \frac{\paren[\big]{h\lambda + h' + \delta}\!(y_b)}{h'(y_b)} + f(y_b + \Delta) \, \paren[\big]{h\lambda + h' + \delta}\!(y_b + \Delta).
	\end{align*}
	Note that $k(y):= \paren[\big]{\paren{h\lambda + h' + \delta}/{h'}}(y)$ satisfies $k(y_\infty) = 0$, $k(y_0) = 1$, $k(y) > 1$ for $y > y_0$ and it is increasing in $(y_\infty,y_0)$ because we have on that interval
	\[
		k'(y)(h')^2
		> (h\lambda + h' + \delta)' h' - (h\lambda + h' + \delta)h'' - (h\lambda + \delta)(h\lambda)'
		= (h\lambda)'h' - (h\lambda + \delta)(h\lambda + h')' >\! 0.
	\]
	Hence, $f k$ is also increasing in $(y_\infty,y_0]$, which implies $g'(\Delta) > 0$ for $y_\infty < y_b \le y_b + \Delta \le y_0$.
	Now let $y_b + \Delta > y_0$.
	Here, we have $k(y_b+\Delta) > 1$ and therefore
	\[
		\frac{g'(\Delta)}{h'(y_b + \Delta)}
			= \paren[\big]{f k}(y_b + \Delta) - \paren[\big]{f k}(y_b)
			\ge \paren[\big]{f k}(y_b + \Delta) - \paren[\big]{f k}(y_0)
			> f(y_b + \Delta) - f(y_0) > 0.
	\]
	In conclusion, $g'(\Delta)>0$ for every $\Delta > 0$, which implies $g(\Delta) > 0$ for $\Delta > 0$.
\end{proof}

\begin{proof}[Proof of \cref{lemma: V continuously differentiable}]
	Inside $\scW$, $\scS_1$ and $\scS_2$, the function $V$ is already $C^1$ because of \cref{lemma: boundary,eq: V boundary}.
	The inequalities \labelcref{eq:var ineq II,eq:var ineq IV} are proven in \cref{lemma: wait region inequality,lemma: simple sell region inequality,lemma: difficult sell region inequality}, while \cref{eq:var ineq I,eq:var ineq III,eq:var ineq boundary cond} are clear by construction.
	
	Let $(y,\theta) \in \widebar\scW \cap \widebar\scS_1$, so $y = y(\theta)$ and $\Delta=0$.
	Continuity is guaranteed by construction.
	We have existence of the directional derivative $V^{\scW}_y + V^{\scW}_\theta$ by \cref{lemma: wait region inequality} and its proof also shows continuity at $y=y(\theta)$.
	It remains to show equality $V^{\scW}_y = V^{\scS_1}_y$ here.
	This is already done in the proof of \cref{lemma: difficult sell region inequality} as $g(0)=0$.

	Now, let $(y,\theta) \in \widebar\scS_1 \cap \widebar\scS_2$, i.e.~$y = y_0 + \theta$ and $\Delta = \theta$.
	Continuity follows from $\Vboundary(0) = 0$, since $h(y_0)\lambda(y_0) + \delta = 0$.
	By construction, the directional derivative $V_y + V_\theta$ exists in $\scS$, so it suffices to show equality of $V_y$ from the left and from the right.
	As shown in \cref{lemma: difficult sell region inequality}, we have 
	\[
		V^{\scS_1}_y(y_0 + \theta, \theta) = f(y_0 + \delta) - f(y_0) = V^{\scS_2}(y_0 + \theta, \theta)
	.\]
	Finally, let $(y,\theta) \in \RR \times \{0\}$.
	Since $h(y_0)\lambda(y_0)+\delta = 0$, it follows $V(\cdot,0)=0$ directly.
	We only need to show existence and continuity of $V_\theta(y,0) = \lim_{\theta \searrow 0} \frac{1}{\theta} V(y,\theta)$.
	Let $y < y_0$.
	As shown in \cref{lemma: difficult sell region inequality},
	\(
		\Vboundary'(\theta) = f(y(\theta)) + \delta \paren[\big]{1 - y'(\theta)} \Vboundary(\theta) / h(y(\theta)),
	\)
	which leads to
	\[
		V^{\scW}_\theta(y,\theta) = f(y(\theta))\exp\paren[\bigg]{\int_{y(\theta)}^y \frac{-\delta}{h(x)} \diff x} +  \frac{\delta}{h(y(\theta))} V^{\scW}(y,\theta)
	\]
	by definition~\eqref{eq:V wait} of $V^\scW$.
	By l'Hôpital's rule,
	\(
		\lim_{\theta \searrow 0} \frac{1}{\theta} V^\scW(y,\theta)
= \lim_{\theta \searrow 0} V^{\scW}_\theta(y,\theta)\) 
equals \( f(y_0)\exp\paren[\big]{\int_{y_0}^y \frac{-\delta}{h(x)} \diff x},
	\)
	which is continuous in $(-\infty, y_0]$ and equals $f(y_0)$ at $y=y_0$.
	For $y>y_0$ we get $V(y,\theta) = V^{\scS_2}(y,\theta)$, if $\theta \ge 0$ is small enough.
	Again by l'Hôpital,
	\(
		\lim_{\theta \searrow 0} \frac{1}{\theta}V^{\scS_2}(y,\theta) = \lim_{\theta \searrow 0} V^{\scS_2}_\theta(y,\theta) = f(y).
	\)
	Now let $y=y_0$.
	For all $\theta > 0$ we have $V(y_0,\theta) = V^{\scS_1}(y_0, \theta)$.
	By construction it is $V^{\scS_1}_\theta(y_0, \theta) = f(y_0) - V^{\scS_1}_y(y_0, \theta)$.
	So by \cref{eq: y derivative of V in difficult sell region},
  the limit \(\lim_{\theta \searrow 0} \frac{1}{\theta} V^{\scS_1}(y_0,\theta) = 
f(y_0) - \lim_{\theta \searrow 0} V^{\scS_1}_y(y_0, \theta)\) is equal to
	\[
		f(y_0) - \lim_{\theta \searrow 0} \paren[\Big]{f(y_0) - f(y_0-\Delta) - \frac{\delta}{h(y_0-\Delta)}\Vboundary(\theta - \Delta)}	= f(y_0),
	\]
	since $\Delta(y_0, \theta) \to 0$ for $\theta \to 0$ and  $h(y_0) \ne 0$.
\end{proof}

\begin{proof}[Proof of \cref{thm:optimal liquidation with intermediate buy orders}]
	That $\Vbuysell\in C^1(\RR\times [0,\infty))$ essentially follows from \cref{lemma: V continuously differentiable}.
	We  show that $\Vbuysell$ satisfies \eqref{eq:ineq when interm buying I} -- \eqref{eq:ineq when interm buying II}.
	It is clear by construction that \eqref{eq:ineq when interm buying I} holds true, so it remains to show \eqref{eq:ineq when interm buying II}.
	For $(y,\theta)\in \widebar{\scS}$ the inequality follows from \cref{lemma: simple sell region inequality,lemma: difficult sell region inequality}; note that we have equality only when $(y,\theta)$ is on the boundary between $\scS$ and $\buyReg$, or $\theta = 0$.
	Now suppose that $(y,\theta)\in \buyReg$.
	For simplicity of the exposition let $\widetilde{\Delta}(y,\theta) = -\Delta(y,\theta) \geq 0$ be the distance from $(y,\theta)$ to the boundary in direction $(1,1)$.
	We shall omit the arguments of $\widetilde{\Delta}$ to ease notation.
	Set $(y_b, \theta_b) := (y+\widetilde{\Delta}, \theta+\widetilde{\Delta})$.
	Then
	\begin{align*}
		\Vbuysell(y,\theta)& = \Vbuysell(y_b, \theta_b) - \int_y^{y_b} f(x)\diff x \qquad \text{and moreover}
	\\	\Vbuysell_y(y,\theta)& = \frac{\diff}{\diff y}\paren[\big]{ \Vbuysell(y + \widetilde{\Delta}, \theta + \widetilde{\Delta}) - \int_y^{y + \widetilde{\Delta}} f(x)\diff x } 
	\\		& = \squeeze[2]{(\widetilde{\Delta}_y+1)V_y + \widetilde{\Delta}_y V_\theta - \paren[\Big]{(1+\widetilde{\Delta}_y)f(y+\widetilde{\Delta}) - f(y)} 
			= f(y) - \Vbuysell_\theta(y_b, \theta_b)}\,,
	\end{align*}
	where the last equality uses $f = V_y + V_\theta$.
	We set
	\begin{align*}
		g(\widetilde{\Delta}):= -h(y_b - \widetilde{\Delta}) \paren[\Big]{f(y_b-\widetilde{\Delta}) - \Vbuysell_\theta(y_b, \theta_b)} - \delta \paren[\Big]{ \Vbuysell(y_b, \theta_b) - \!\int_{y_b - \widetilde{\Delta}}^{y_b} f(x)\diff x }.
	\end{align*}
	Note that $g(0) = 0$ by construction of the boundary between $\scS$ and $\buyReg$ in \cref{subsection: smooth-pasting}.
	Thus, it suffices to verify $g' \leq 0$.
	We have
	\(
		g'(\widetilde{\Delta}) = f(y)\paren[\big]{ h(y)\lambda(y) + h'(y) + \delta } - h'(y)\Vbuysell_{\theta}(y_b, \theta_b),
	\)
	recalling $y = y_b - \widetilde{\Delta}$.
	Recall the following form for $\Vbuysell$ on the boundary (see \eqref{eq:C on the boundary}):
	\begin{align}
		\Vbuysell_\theta(y_b, \theta_b)&= f(y_b) \frac{h(y_b)\lambda(y_b) + h'(y_b) + \delta}{h'(y_b)}. \label{eq:V_theta on the boundary}
	\end{align}
	Thus, checking that $g'(\widetilde{\Delta}) \leq 0$ is equivalent to verifying
	\begin{equation}\label{eq:ineq verif}
		h(y)\lambda(y) + h'(y) + \delta - \frac{h'(y)}{h'(y_b)} \cdot \frac{f(y_b)}{f(y)}\cdot \paren[\big]{ h(y_b)\lambda(y_b) + h'(y_b) + \delta } \leq 0.
	\end{equation}
	Since $y \leq y_b$ we have that $f(y)\leq f(y_b)$.
	Hence it suffices to check the last inequality when $f(y_b)/f(y)$ is replaced by 1.
	This is equivalent to verifying that $k(y) \le k(y_b)$ for $k:= (h\lambda + h' + \delta)/h'$. 
	For $y \le y_\infty$ this is trivial because $k(y) \le 0 < k(y_b)$ in that case.
	For $y_\infty < y \le y_b$ we can use monotonicity of $k$, which was shown in the proof of \cref{lemma: difficult sell region inequality}.

	Note that the analysis above actually shows that equality in \eqref{eq:ineq when interm buying II} holds if and only if $(y,\theta)$ is on the boundary between $\scS$ and $\buyReg$.
	This ensures uniqueness of the optimal strategy.
	The rest of the proof follows on the same lines as in the one for \cref{thm: optimal strategy}.
\end{proof}

%\bibliography{../definitions/biblio}
%\bibliographystyle{alpha}

\end{document}